\newcommand{\abar}{\overline{a}}
\newcommand{\bbar}{\overline{b}}
\newcommand{\Abar}{\overline{A}}
\newcommand{\Bbar}{\overline{B}}
\newcommand{\Cbar}{\overline{C}}
\newcommand{\Dbar}{\overline{D}}
\newcommand{\pdv}[2]{\frac{\partial #1}{\partial #2}}
\newcommand{\mpdv}[3]{\frac{\partial^2 #1}{\partial #2 \partial #3}}
\newcommand{\intxv}{\int_{xv}}
\newcommand{\inttxv}{\int_{txv}}
\newcommand{\intxvv}{\int_{xvv_*}}
\newcommand{\weakstarto}{\overset{\ast}{\rightharpoonup}}
\newcommand{\weakto}{\rightharpoonup}
\newcommand{\Rv}{\mathbb{R}^N_v}
\newcommand{\Rx}{\mathbb{R}^N_x}
\newcommand{\Rxv}{\mathbb{R}^{2N}_{x,v}}
\newcommand{\Rxvv}{\mathbb{R}^{3N}_{xvv_*}}
\newcommand{\RN}{\mathbb{R}^N}
\newcommand{\weightxv}{\langle x, v \rangle}
\newcommand{\vertiii}[1]{{\left\vert\kern-0.25ex\left\vert\kern-0.25ex\left\vert #1 
		\right\vert\kern-0.25ex\right\vert\kern-0.25ex\right\vert}}
\newcommand{\proofpart}[2]{%
	\vspace{8pt}
	\par
	\addvspace{\medskipamount}%
	\noindent\emph{Part #1: #2}\par\nobreak
	\addvspace{\smallskipamount}%
	\@afterheading
}
\DeclareMathOperator{\supp}{supp}
\DeclareMathOperator{\Div}{div}
\newtheorem{definition}{Definition}
\newtheorem{lemma}{Lemma}
\newtheorem{theorem}{Theorem}
\newtheorem{proposition}{Proposition}
\newtheorem{remark}{Remark}
\author{Paulo Sampaio}
\title{Global solutions to the Landau-Fermi-Dirac equation}
\date{}
\address{Centre de Mathématiques Laurent Schwartz, École polytechnique,	91128 Palaiseau Cedex, France}
\email{paulo-elpidio.alves-sampaio@polytechnique.edu}
\begin{document}
\begin{abstract}
	We establish a compactness result for solutions of a certain class of hypoelliptic equations. This result allows us to show the existence of global weak solutions to the non-homogeneous Landau-Fermi-Dirac equation with Coulomb potential.
\end{abstract}

\maketitle
\tableofcontents

\section{Introduction}

Kinetic theory aims to describe a gas by a probability density function $f = f(t,x,v)$.
That is, for a time $t$, the probability of finding a particle in the region $dx$ with velocity in the range $dv$ is $f(t,x,v)dxdv$.

The most famous description for $f$ is given by the Boltzmann equation.
A particularly interesting case is the one with hot gases of charged particles, such as plasmas.
In this regime, grazing collisions prevail and this limit gives rise to another model, called Landau equation.

If one considers fermions (particles described by Fermi-Dirac statistics, such as electrons), then quantum effects must be taken into account and this gives rise to the Boltzmann-Fermi-Dirac (BFD) and Landau-Fermi-Dirac (LFD) equations.

In this paper, we study the Cauchy problem for the LFD equation, i.e. given an initial data $f_0$, find a function $f$ such that
\begin{equation}
	\label{eq:lfd}
	\begin{cases}
	\displaystyle
	\pdv{f}{t} + v \cdot \nabla_x f 
	= \nabla_v \cdot
	\left(
		\int a(v - v_*) \big(
					f_* (1-\delta f_*) \nabla_v f -
					f (1 - \delta f) \nabla_{v_*}f_*
					\big) dv_*
	\right)\\

	f|_{t=0} = f_0
	\end{cases}
\end{equation}

Here, we have used the common notation $f = f(t,x,v)$ and $f_* = f(t,x,v_*)$.
The collision kernel $a = (a_{ij})_{ij}$ has the form
\begin{equation}
	\label{collision-kernel-formula}
	a (z) = \Gamma(|z|) \left( I - \frac{z \otimes z}{|z|^2}\right),
\end{equation}
where the cross section $\Gamma(|\cdot|)$ is a function that depends on the type of interaction between the particles.

The parameter $\delta > 0$ is proportional to the cube of Planck's constant $\hbar$ and thus measures the quantum effects modeled by the equation.
Accordingly, we see that in the limit case $\delta = 0$ we recover the (classical) Landau equation.

If the particle interaction is held by a power law potential, then
\[
	\Gamma(|z|) = |z|^{\gamma+2}, \, \text{ with } -3 \leq \gamma \leq 1.
\]
In the literature, we find that the nomenclature of the potential varies depending on the range in which the $\gamma$ are considered, so generally if $0 < \gamma \leq 1$ we say that it is a hard potential, if $\gamma < 0$ then it is a (moderately) soft potential, if $\gamma < -2$ then it's a very soft potential and if $\gamma = -3$ then it is a Coulomb potential.

As for the classical Landau equation, this form of the matrix $a$ allows us to deduce, at least formally, that
\[
	\frac{d}{dt} \iint f(t) \varphi \, dxdv = 0
\]
if $\varphi = 1, v_i, |v|^2$ or $|x - vt|^2$, which physically corresponds to the conservation of mass, linear momentum, kinetic energy and moment of inertia, respectively.

Also, if we define the (quantum) entropy
\[
	S(t) = \frac{1}{\delta} \iint \delta f \log (\delta f) + (1-\delta f) \log (1-\delta f)\, dxdv,
\]
it follows that, at least formally, this quantity should be decreasing in time.

These conservation laws, as well as the entropy decay leads us to assume that physically acceptable solutions to this equation must satisfy the uniform bound
\begin{equation}
	\label{natural-bounds}
	\sup_{t > 0} \iint f (1 + |x-vt|^2 + |v|^2 + \log f) \, dxdv < +\infty.
\end{equation}
provided this bound holds at time $t=0$.
In the study of kinetic equations, these are called \emph{a priori} bounds.

The case we will be interested in this paper, which is also the most physically interesting case, is the one where the particles interact through a Coulomb potential, and we therefore usually think $\gamma = -3$, but the whole argument is also valid for very soft potentials.
This is, however, the least understood case, due to the singularity at $z=0$.

For the Landau equation, the bounds implied by \eqref{natural-bounds} are not sufficient for making sense out of a solution to the equation, even in the sense of distributions, because the collision integral is not well defined in this case.
To get around this problem, in 1994 Lions \cite{lions_1994} defined the notion of a \emph{renormalized solution}, in the same spirit as his earlier work with Di Perna for the Boltzmann equation \cite{diperna-lions-1989}.

The main technique in this case is to derive a \emph{sequential stability} result for solutions of the equation.
Then, constructing smooth solutions to a carefully chosen approximate equation, this compactness theorem allows us to pass to the limit, thus showing existence for the desired equation.

For the Landau equation, even though we have a compactness result for its renormalized solutions (see Lions \cite{lions_1994}), we cannot pass the renormalized equation directly to the limit and a defect measure appears, as shown by Villani \cite{villani_1996}, thus leading to an even weaker sense of solution.

Fermions, on the other hand, obey the \emph{Pauli Exclusion Principle}, which states that no two particles can occupy the same quantum state at the same instant.
Mathematically, this means that
\begin{equation}
	\label{pauli-bound}
	0 \leq f \leq \frac{1}{\delta},
\end{equation}
and thus the Pauli principle gives us an extra $L^\infty$ bound for the solutions.
This implies that the collision integral is well defined even for the Coulomb case, thus in the quantum case we don't expect the need for renormalized solutions or the appearance of defect measures.

The Cauchy problem for quantum fluids goes back as far as Dolbeaut \cite{dolbeault_1994}, who showed existence and uniqueness of solutions to the BFD equation with cutoff satisfying the conservation laws and decay of entropy.
Afterwards, Alexandre \cite{alexandre_2000} then showed the existence of solutions to the BFD equation without the cutoff hypothesis.

The first significant study of the LFD equation was conducted Bagland \cite{bagland_2004} in 2004, who showed the existence of solutions for hard potentials in the homogeneous case (where $f$ does not depend on $x$).
Recently, multiple results on the Cauchy problem for homogeneous LFD has been proven.
Alonso, Bagland, Desvillettes and Lods \cite{alonso_2022} have shown existence for soft potentials and Golding, Gualdani and Zamponi \cite{golding_2022} have shown existence and smoothness for a Coulomb potential.

The inhomogeneous case of the LFD equation, however, remains poorly understood and to the author's knowledge, there are no results on the Cauchy problem for this equation.

In this paper, we will show existence of (weak) solutions to the LFD equation in the Coulomb case, hugely inspired by the works of Lions \cite{lions_1994} and Villani \cite{villani_1996}.

\section{Main result}

Throughout the paper, we will use the repeated index summation convention and we will note $\Rxv = \mathbb{R}^N_x \times \mathbb{R}^N_v$ the phase space.

Relabeling $\delta f$ as $f$ and $a_{ij}$ as $a_{ij}/\delta$, one can rewrite the equation as
\[
	\pdv{f}{t} + v \cdot \nabla_x f = \pdv{}{v_i} \left(\abar_{ij} \pdv{f}{v_j} - \bbar_i f(1-f) \right),
\]
where
$ \abar_{ij} = a_{ij} \ast_v (f(1-f)) $ and $\bbar_i = \pdv{a_{ij}}{v_j} \ast_v f$.
This way we can, without loss of generality, suppose $\delta = 1$.

Throughout the paper we will assume that the cross section $\Gamma$ satisfies
\begin{equation}
	\label{property:Gamma-ellipticity}
	\forall R > 0 \text{ there exists a } K_R > 0
	\text{ such that }
	\Gamma(|z|) \geq K_R, \,\forall |z| \leq R,
\end{equation}
which will guarantee some ellipticity for the collision kernel and also that it has integrability
\begin{equation}
	\label{property:cross-section-regularity}
	\Gamma(|z|) \in L^{r}(\mathbb{R}^N) + L^\infty(\mathbb{R}^N),
\end{equation}
for some $r > \frac{N}{N-1}$, which means that $\Gamma(|z|)$ can be decomposed as the sum of a function in $L^{r}(\mathbb{R}^N)$ with a function in $L^\infty(\mathbb{R}^N)$.

It is worth noting that the properties \eqref{property:Gamma-ellipticity} and \eqref{property:cross-section-regularity} cover the cases discussed above of very smooth potentials and, most importantly, Coulomb potentials.
Other cases can be treated using variations of the techniques discussed here.

We now state explicitly what we mean by a solution of the equations we will deal in this paper.
\begin{definition}
	\label{def-weak-solution}
	Let $T > 0$ and $\Abar_{ij}, \Bbar_i, \Cbar, \Dbar \in L^\infty_{loc}((0,\infty) \times \Rxv)$.
	A function $f = f(t,x,v)$ in $L^1_{loc}((0,\infty) \times \Rxv) \cap C((0,\infty);\mathcal{D}'(\Rxv))$ is called a (global) weak solution of the equation
	\[
		\pdv{f}{t} + v \cdot \nabla_x f = \pdv{}{v_i} \left( \Abar_{ij} \pdv{f}{v_j} + \Bbar_i f \right) + \Cbar f + \Dbar
	\]
	with initial data $f_0$ if it solves the equation in $\mathcal{D}'((0,\infty) \times \mathbb{R}^N_x \times \mathbb{R}^N_v)$, that is, for every test function $\varphi \in \mathcal{D}((0,\infty) \times \mathbb{R}^N_x \times \mathbb{R}^N_v)$, we have
	
	\begin{multline*}
		-\int_0^\infty \iint f \pdv{\varphi}{t} \, dxdvdt
		-\int_0^\infty \iint f v_i \pdv{\varphi}{x_i} \, dxdvdt =
		\int_0^\infty \iint \Abar_{ij} f \mpdv{\varphi}{v_i}{v_j} \, dxdvdt\\
		+ \int_0^\infty \iint \left(
				\pdv{\Abar_{ij}}{v_j} f + \Bbar_i f
			\right) \pdv{\varphi}{v_i} \, dxdvdt
		+ \int_0^\infty \iint \left(
			\Cbar f + \Dbar
			\right) \varphi \, dxdvdt
	\end{multline*}
	and also for every $\psi \in \mathcal{D}(\mathbb{R}^N_x \times \mathbb{R}^N_v)$, we have
	\[
		\iint f(t) \psi \, dxdv \; \xrightarrow{t \to 0^+} \iint f_0 \psi \, dxdv.
	\]
\end{definition}

Notating $L^1_2(\Rxv)$ the set of functions $f$ such that $$\iint_{\Rxv} |f(x,v)| (1+|x|^2+|v|^2) dxdv < +\infty,$$
our main result then reads
\begin{theorem}
	\label{existence-lfd}
	Let $N \geq 2$ and $f_0 \in L^1_2(\Rxv)$ be such that $0 \leq f_0 \leq 1$.
	There exists a weak solution $f \in C((0,\infty);\mathcal{D}'(\Rxv)) \cap L^\infty((0,\infty); L^1_2(\Rxv))$ to the LFD equation with initial data $f_0$, in the sense of Definition \ref{def-weak-solution}, which satisfies, for a.e. $t \geq 0$,

	1) Pauli exclusion principle:
	\[
		0 \leq f(t) \leq 1.
	\]

	2) Conservation of mass and linear momentum:
	\[
		\iint f(t,x,v) \, dxdv
		= \iint f_0(x,v) \,dxdv,
	\]\[
		\iint f(t,x,v) v_i \, dxdv
		= \iint f_0(x,v) v_i \,dxdv
		\hspace{10pt} \forall i \in \{ 1, \cdots, N \}.
	\]

	3) Decay of kinetic energy and moment of inertia:
	\[
		\iint f(t,x,v) |v|^2 \,dxdv
		\leq \iint f_0(x,v) |v|^2 \,dxdv,
	\]
	\[
		\iint f(t,x,v) |x-tv|^2 \,dxdv
		\leq \iint f_0(x,v) |x-tv|^2 \,dxdv.
	\]
	
	4) Entropy inequality:
	\[
		\iint s(t,x,v)\,dxdv + \int_{0}^{t}\iint d(\tau,x,v)\,dxdvd\tau \leq \iint s(0,x,v)\,dxdv,
	\]
	where
	\[
		s(t,x,v) = f\log f + (1-f)\log(1-f)
	\]
	is the quantum entropy	and
	\begin{equation}
		\label{dissipation-def}
		d(t,x,v) =
		\int a(v-v_{*})f(1-f)f_{*}(1-f_{*})
		\left|\frac{\nabla_{v}f}{f(1-f)}-\frac{\nabla_{v_{*}}f_{*}}{f_{*}(1-f_{*})}\right|^{\otimes2}dv_{*},
	\end{equation}
	is the quantum entropy dissipation.
\end{theorem}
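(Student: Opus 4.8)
The plan is to construct global weak solutions via a vanishing-viscosity / Galerkin approximation scheme, derive uniform a priori estimates, and pass to the limit using the compactness result announced in the abstract. The overall architecture follows the Lions--DiPerna philosophy adapted to the quantum ($L^\infty$-bounded) setting.

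\emph{Step 1: the approximate problem.} First I would regularize the LFD equation to obtain a sequence $f^\varepsilon$ of smooth solutions. The natural choice is to add a small diffusion $\varepsilon\Delta_{x,v}f^\varepsilon$ (to make the transport operator parabolic and gain regularity in $x$), mollify the coefficients $\abar_{ij}$, $\bbar_i$ by convolving $f$ with a smooth kernel before forming the convolutions with $a_{ij}$ (so the coefficients are Lipschitz even in the Coulomb case), truncate the potential near $z=0$ by replacing $\Gamma(|z|)$ with $\Gamma(|z|)\wedge \frac1\varepsilon$ or by $\Gamma(\sqrt{|z|^2+\varepsilon^2})$, and localize in a ball $\Omega_R$ (or add a confining term / work on a torus) to get existence of smooth solutions by a fixed-point or Galerkin argument. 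The key point is that the structural identity of the collision operator — the fact that $\nabla_v\cdot(\abar_{ij}\partial_{v_j}f - \bbar_i f(1-f))$ arises from the symmetric bilinear collision form — must be preserved by the approximation, so that the formal conservation laws and the entropy-dissipation identity survive at the $\varepsilon$ level. I expect to need the ellipticity hypothesis \eqref{property:Gamma-ellipticity} here to keep $\abar_{ij}$ uniformly elliptic on the support of $f^\varepsilon$, using that $0\le f^\varepsilon\le 1$.

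\emph{Step 2: uniform a priori bounds.} Next I would establish, uniformly in $\varepsilon$ (and $R$): the maximum principle $0\le f^\varepsilon\le 1$ (this is the crucial quantum feature — one checks that $f^\varepsilon\equiv 0$ and $f^\varepsilon\equiv 1$ are super/sub-solutions, using that the coefficient of the zeroth-order term in the drift vanishes at $f=0$ and $f=1$); conservation of mass and momentum and the decay of $\iint f^\varepsilon|v|^2$ and $\iint f^\varepsilon|x-tv|^2$ by testing against $1, v_i, |v|^2, |x-tv|^2$ and using the null-Lagrangian structure of $a$ (the $\varepsilon$-viscosity contributes a term of the right sign for the energy); and the entropy--entropy-dissipation inequality obtained by multiplying the equation by $s'(f^\varepsilon)=\log\frac{f^\varepsilon}{1-f^\varepsilon}$ and integrating, which yields $\iint s(f^\varepsilon(t)) + \int_0^t\iint d^\varepsilon \le \iint s(f_0)$ plus a nonnegative viscous contribution. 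These give the bound \eqref{natural-bounds} uniformly, hence $f^\varepsilon$ bounded in $L^\infty((0,\infty);L^1_2)\cap L^\infty((0,T)\times\Rxv)$, and $d^\varepsilon$ bounded in $L^1((0,T)\times\Rxv)$, which in turn (via the ellipticity of $a$ and Cauchy--Schwarz) controls $\sqrt{a(v-v_*)}\,(\nabla_v f^\varepsilon\sqrt{f_*^\varepsilon(1-f_*^\varepsilon)} - \ldots)$ in $L^2$, yielding a bound on $\abar_{ij}^\varepsilon$ and on $\nabla_v\sqrt{f^\varepsilon}$-type quantities in appropriate local spaces.

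\emph{Step 3: compactness and passage to the limit.} Then I would invoke the hypoelliptic compactness theorem of the paper: the transport part $\partial_t + v\cdot\nabla_x$ together with the $L^2_{loc}$ control on $\nabla_v f^\varepsilon$ (coming from entropy dissipation and ellipticity) and the uniform $L^\infty$ bound give, by averaging-lemma / velocity-averaging plus hypoellipticity arguments, strong compactness of $f^\varepsilon$ in $L^1_{loc}((0,\infty)\times\Rxv)$ — or at least of its velocity averages, which combined with the $v$-regularity upgrades to strong $L^1_{loc}$ convergence of $f^\varepsilon$ itself. With $f^\varepsilon\to f$ strongly in $L^1_{loc}$ and $0\le f\le 1$, the coefficients converge: $\abar_{ij}^\varepsilon = a_{ij}\ast(f^\varepsilon(1-f^\varepsilon)) \to \abar_{ij}$ and $\bbar_i^\varepsilon\to\bbar_i$ strongly in $L^1_{loc}$ (here one uses \eqref{property:cross-section-regularity}: writing $\Gamma = \Gamma_1+\Gamma_\infty$ with $\Gamma_1\in L^r$, $r>\frac{N}{N-1}$, the convolution with the $L^\infty\cap L^1$ function $f^\varepsilon(1-f^\varepsilon)$ is handled by Young's inequality and the singularity at $z=0$ is integrable against an $L^\infty$ density in dimension $N$). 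The products $\abar_{ij}^\varepsilon f^\varepsilon$, $\bbar_i^\varepsilon f^\varepsilon(1-f^\varepsilon)$ then converge in $L^1_{loc}$, and since the equation is linear in $f$ once the coefficients are frozen, one passes to the limit in the weak formulation of Definition~\ref{def-weak-solution} directly — no defect measure appears, precisely because the $L^\infty$ bound makes the operator well-defined. Lower semicontinuity of convex functionals gives the entropy inequality (4) and the energy/inertia decay in the limit; mass and momentum conservation pass by testing with fixed (cut-off) functions and using the $L^1_2$ bound to justify the limit; and the weak continuity in time at $t=0^+$ follows from the uniform equicontinuity of $t\mapsto\iint f^\varepsilon(t)\psi$ provided by the equation.

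\emph{Main obstacle.} The hard part will be Step 3, specifically obtaining \emph{strong} $L^1_{loc}$ compactness of $f^\varepsilon$ (not merely of velocity averages) from the combination of hypoellipticity and the degenerate $v$-regularity furnished by entropy dissipation — the diffusion matrix $a(v-v_*)$ is only nonnegative (it degenerates in the direction $v-v_*$) and singular at $v=v_*$, so upgrading the dissipation bound to a genuine $\nabla_v f^\varepsilon\in L^2_{loc}$ estimate, uniformly in $\varepsilon$, requires care; this is exactly what the paper's abstract compactness theorem is designed to supply, and the bulk of the work is verifying its hypotheses for $f^\varepsilon$. A secondary technical difficulty is making the approximation scheme in Step 1 simultaneously (i) smooth enough to run the classical existence theory, (ii) structure-preserving enough to retain all conservation/dissipation identities, and (iii) compatible with the limit passage — in particular the order in which one removes the $\varepsilon$-viscosity, the coefficient mollification, the potential truncation, and the spatial localization $R\to\infty$ must be chosen so that each limit is justified by the bounds available at that stage.
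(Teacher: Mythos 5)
Your plan follows essentially the same architecture as the paper: regularize (viscosity, mollified kernel and data), prove the Pauli bound, conservation laws and entropy inequality at the approximate level, then use the velocity-averaging/hypoelliptic compactness machinery to get strong $L^1_{loc}$ convergence and pass to the limit. The differences are mostly implementation details (the paper adds viscosity only in $v$, keeping the Kolmogorov structure and using H\"ormander hypoellipticity plus Gaussian comparison barriers on the whole space, rather than localizing to $\Omega_R$), and you correctly identify the degenerate ellipticity as the main obstacle.

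One assertion in your Step 1 is, however, genuinely wrong and masks the central difficulty: $\abar_{ij}=a_{ij}\ast_v(f(1-f))$ is \emph{not} uniformly elliptic on the support of $f^\varepsilon$, even with \eqref{property:Gamma-ellipticity} and $0\le f^\varepsilon\le 1$. The lower bound one gets is only
\[
\abar^n_{ij}\eta_i\eta_j \gtrsim \int_{|v_*|\le R'}\Bigl[1-\Bigl(\tfrac{v-v_*}{|v-v_*|}\cdot\tfrac{\eta}{|\eta|}\Bigr)^2\Bigr]|\eta|^2 f^n_*(1-f^n_*)\,dv_*,
\]
which degenerates both in the direction $v-v_*$ and wherever $f(1-f)$ is small (quantum vacuum). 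The paper's Lemma~\ref{lemma-ellipticity} recovers ellipticity only on the non-vacuum set $K_\alpha=\{\int f(1-f)\,dv>\alpha\}$, up to a set of small measure, via Dini/Egorov arguments, and Proposition~\ref{prop-ae-compactness} is engineered precisely to accept this weakened ``quasi-bounded in $v$'' hypothesis. Two related points your sketch misses: (i) because the ellipticity and the vacuum set are governed by $f(1-f)$, the velocity-averaging lemma must be applied to $f^n(1-f^n)$ rather than to $f^n$, which requires deriving the transport equation satisfied by $f^n(1-f^n)$ (a nontrivial renormalization step at low regularity); the degenerate gradient bound $\int\abar^n_{ij}\partial_{v_i}f^n\partial_{v_j}f^n\le C$ comes from that equation, not from the entropy dissipation; and (ii) the limit dissipation $d$ in part (4) cannot be read off from \eqref{dissipation-def} since $\nabla_v f$ need not exist for the limit --- one must rewrite the dissipation in divergence form involving $\arcsin\sqrt{f}$ (as in Remark~\ref{rem:entopy-dissipation-definition}) before invoking weak lower semicontinuity of the $L^2$ norm.
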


\begin{remark}
	\label{rem:entopy-dissipation-definition}
	Given that weak solutions may have no regularity in $v$, one might wonder what expression \eqref{dissipation-def} above means.
	In the proof of the entropy inequality, we will see that the integral $\int_{0}^{t}\iint d(\tau,x,v)\,dxdvd\tau$ is a notation for the $L^2((0,t) \times \mathbb{R}^N_x \times \mathbb{R}^N_v \times \mathbb{R}^N_{v_*})$ norm of
 	\begin{multline*}
			\Div_v \left[ \sqrt{a(v-v_*)} \sqrt{f_* (1-f_*)} \arcsin \sqrt{f} \right]
			- \Div_{v_*} \left[ \sqrt{a(v-v_*)} \sqrt{f (1-f)} \arcsin \sqrt{f_*} \right]\\
			- \Div_v \left( \sqrt{a(v-v_*)} \right) \sqrt{f_* (1-f_*)} \arcsin \sqrt{f}
			+ \Div_{v_*} \left(\sqrt{a(v-v_*)}\right) \sqrt{f (1-f)} \arcsin \sqrt{f_*},
	\end{multline*}
	where the divergence of a matrix $A$ is defined as $\Div_v A = \sum_{j} \pdv{A_{ij}}{v_j}$.
	The motivation for using the expression \eqref{dissipation-def} is that if $f$ has some regularity in $v$, then the two expressions are equal, which we will see later in the proof.
\end{remark}

We have divided the text into three sections.
In Section 3, we will prove a compactness result for the solutions of LFD-like equations, which involves studying the parabolic and transport parts of this equation separately in order to achieve compactness for the sequence of solutions.

Moving on to Section 4, we will show the existence of solutions to an approximate equation.
We will define the approximation to be used and construct solutions for it that will exhibit better integrability and regularity, which we will use to show that they satisfy the conservation laws (mass, momentum, energy, ...) in an approximate sense.

Lastly, in Section 5 we construct a sequence of regularized solutions with progressively weaker regularization.
This approximation scheme will be compatible with the compactness theorem of Section 3, which will allow us to pass to the limit and show that it is indeed a solution of the LFD equation obeying the conservation laws and inequalities.

\section{A compactness result for the LFD equation}

Throughout the whole paper, we will use the notation $\intxv$ for the integral $\iint_{\Rxv} \; dxdv$.
In this section, our main goal is to prove the following compactness result for LFD-like equations.
We will use the space $L^2_{loc}((0,\infty) \times \Rx; H^1_{loc}(\Rv))$ of $L^2_{loc}((0,\infty) \times \Rxv)$ functions whose $v$ derivatives in the sense of distributions $\pdv{f}{v_i}$ can be identified to $L^2_{loc}((0,\infty) \times \Rxv)$ functions.
\begin{proposition}
	\label{prop-compactness}
	Let $\Abar^n, \pdv{\Abar^n_{ij}}{v_j}, \Bbar^n_i, \pdv{\Bbar^n_i}{v_i}$ be $L^\infty_{loc}((0,\infty) \times \Rxv)$ functions and $(g^n)_n$ be a sequence of weak solutions in $L^2_{loc}((0,\infty) \times \Rx; H^1_{loc}(\Rv))$ to
	\begin{equation}
		\label{eq-landau-like}
		\begin{cases}
		\displaystyle
		\pdv{g^n}{t} + v \cdot \nabla_x g^n
		= \Div_v \left[ \Abar^n \nabla_v g^n + \Bbar^n g^n (1-g^n) \right]\\

		g^n|_{t=0} = g^n_0,
		\end{cases}
	\end{equation}
	in the sense of Definition \ref{def-weak-solution}, such that, for every $\eta \in \mathbb{R}^N$ and $R, R' > 0$ there exists some $\nu = \nu(R, R') > 0$ such that
	\begin{equation}
		\label{ineq-uniform-ellip}
		\Abar^n_{ij} \eta_i \eta_j \geq \int_{|v_*| \leq R'} \nu
		\left[ 1 - \biggl( \frac{v-v_*}{|v-v_*|} \cdot \frac{\eta}{|\eta|} \biggr)^2 \right] |\eta|^2 g^n_* (1-g^n_*) \,dv_*,
		\;\forall v\in\mathbb{R}^N_v,  |v| \leq R
	\end{equation}

	Suppose moreover that $(g^n)_n$ satisfies
	\begin{gather*}
		\sup_{t > 0} \intxv g^n (1 + |x - vt|^2 + |v|^2) \leq C,\\
		0 \leq g^n \leq 1.
	\end{gather*}	

	If $\Abar^n, \pdv{\Abar^n_{ij}}{v_j}, \Bbar^n_i, \pdv{\Bbar^n_i}{v_i}$ are uniformly bounded in $L^1_{loc}((0,\infty) \times \Rxv)$, then the sequence $(g^n)_n$ is compact in $L^1_{loc}((0,\infty); L^1(\Rxv))$.
\end{proposition}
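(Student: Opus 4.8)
The plan is to obtain compactness of $(g^n)_n$ by splitting the equation into a degenerate parabolic part in $v$ and a free transport part in $x$, gaining compactness in each variable separately and then combining. First I would exploit the uniform ellipticity assumption \eqref{ineq-uniform-ellip} together with the uniform bound on $g^n$: since $g^n$ satisfies \eqref{eq-landau-like} with $\Abar^n$ controlling $\nabla_v g^n$ from above (in $L^2_{loc}$, using the $L^\infty$ bound on $g^n$ and the local $L^1$ bounds on the coefficients) and from below through \eqref{ineq-uniform-ellip}, I would first need a quantitative lower bound on $\Abar^n$ itself. This is where the moment bound $\sup_t \intxv g^n(1+|x-vt|^2+|v|^2) \le C$ enters: it prevents the mass of $g^n$ from escaping to infinity in $v$, so that $\int_{|v_*| \le R'} g^n_*(1-g^n_*)\,dv_*$ cannot degenerate to zero uniformly on sets of positive measure — more precisely, one shows that away from a small-measure set, $\Abar^n_{ij}\eta_i\eta_j \ge \kappa |\eta|^2$ for some $\kappa > 0$ independent of $n$. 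The anisotropic weight $[1 - (\widehat{v-v_*}\cdot\hat\eta)^2]$ is the usual obstruction, but integrating over a ball of $v_*$ kills the directional degeneracy.

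Second, with this local uniform ellipticity in hand on a "good set," I would use the equation to get a uniform $L^2_{loc}$ bound on $\nabla_v g^n$: testing the weak formulation against $g^n \chi$ for suitable cutoffs $\chi$ gives $\int \Abar^n \nabla_v g^n \cdot \nabla_v g^n \lesssim$ (bounded terms from the $L^1_{loc}$ coefficient bounds and $L^\infty$ bound on $g^n$), hence $\nabla_v g^n$ is bounded in $L^2$ on the good set. This yields compactness of $(g^n)_n$ in $v$ (locally, on the good set) via Rellich. Simultaneously, from the equation, $\partial_t g^n + v\cdot\nabla_x g^n = \Div_v(\cdots)$ with right-hand side bounded in, say, $L^1_{loc}$ or $H^{-1}_{loc}$ in $v$; this is exactly the setting of velocity averaging / hypoelliptic regularity, so I would invoke an averaging lemma to transfer the $v$-regularity into fractional $(t,x)$-regularity of velocity averages $\int g^n \phi(v)\,dv$, and then combine with the $v$-compactness to upgrade to full compactness of $g^n$ in $L^1_{loc}((0,\infty); L^1(\Rxv))$. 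The good-set restriction is then removed by letting the exceptional set shrink and using the uniform integrability provided by the $L\log L$-type control implicit in the moment bounds together with $0 \le g^n \le 1$ (which actually makes equiintegrability automatic on bounded sets).

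The main obstacle I expect is the degeneracy of the diffusion matrix: $\Abar^n$ is only bounded below in the averaged, anisotropic sense \eqref{ineq-uniform-ellip}, and making this into a usable uniform-in-$n$ coercivity estimate requires carefully quantifying how the moment bound keeps $g^n_*(1-g^n_*)$ from concentrating or vanishing — one must argue that the set where the effective diffusion is small has small measure, uniformly in $n$, and handle the contribution of $g^n$ there separately using equiintegrability. A secondary technical point is that the coefficients $\Abar^n, \Bbar^n$ are only in $L^1_{loc}$, not $L^\infty$ uniformly, so the energy estimate and the averaging lemma must be run in a form tolerating merely integrable (or $H^{-1}_{loc}$) source terms; truncating $g^n$ by renormalization $\beta(g^n)$ with bounded $\beta, \beta', \beta''$ — in the spirit of Lions \cite{lions_1994} and Villani \cite{villani_1996} — is the natural device to control these terms, at the cost of having to pass back from $\beta(g^n)$ to $g^n$ at the end, which is routine given the uniform $L^\infty$ bound.
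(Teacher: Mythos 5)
Your overall architecture --- velocity averaging for the transport part, a partial ellipticity estimate for the diffusive part, then a gluing argument --- is the same as the paper's, and the energy estimate you propose (testing the renormalized equation against $g^n$) is essentially the paper's inequality \eqref{ineq-aij-deriv}. But the step on which your whole argument rests is wrong as stated: the moment bound $\sup_t \intxv g^n(1+|x-vt|^2+|v|^2)\le C$ is an \emph{upper} bound on the mass and is perfectly compatible with $g^n\equiv 0$ (vacuum) or $g^n\equiv 1$ (saturation) on $(t,x)$-regions of large measure, where $\int_{|v_*|\le R'}g^n_*(1-g^n_*)\,dv_*=0$ and \eqref{ineq-uniform-ellip} gives nothing. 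It is therefore false that ``the set where the effective diffusion is small has small measure, uniformly in $n$'': that set can have full measure. The paper's Lemma \ref{lemma-ellipticity} only proves ellipticity on the set $K_\alpha$ where the \emph{limit} density $\rho=\lim_n\int g^n(1-g^n)\,dv$ exceeds $\alpha$, minus an Egorov set of measure $\varepsilon$ and only for $n\ge n_0$; the complement $K_\alpha^c$ is then absorbed in the gluing step (Proposition \ref{prop-ae-compactness}) not because it is small but because the limit carries little $L^1$ mass there ($\iint_{K_\alpha^c}\int_{B_R}\Phi\,dv\,dt\,dx\lesssim\alpha$), a different mechanism from the equiintegrability you invoke. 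Likewise, ``integrating over a ball of $v_*$ kills the directional degeneracy'' is only true pointwise for each fixed $n$: the mass of $g^n_*(1-g^n_*)$ may concentrate along the line $v+\mathbb{R}\eta$ in the limit, so uniformity in $n$ and in $(v,\eta)$ requires the angular truncation by $\mu$, Dini's theorem and Egorov's theorem as in the paper.

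This has a knock-on consequence you do not address: to define $K_\alpha$ and run the Egorov argument one needs \emph{strong} $L^1_{loc}$ compactness of the velocity averages of the nonlinear quantity $g^n(1-g^n)$, not of $g^n$; weak-$*$ convergence does not give a.e.\ convergence of the truncated densities $\rho^n_\mu$, and compactness of $\int g^n\varphi\,dv$ does not transfer to $\int g^n(1-g^n)\varphi\,dv$. One must therefore derive the transport equation satisfied by $g^n(1-g^n)$ (the paper mollifies in $(t,x)$ and multiplies by $1-2g^n_m$ --- precisely the renormalization device you mention, but applied here rather than to the coefficient bounds) and feed \emph{that} equation to the averaging lemma; the same computation simultaneously produces and bounds the term $\Abar^n_{ij}\pdv{g^n}{v_i}\pdv{g^n}{v_j}$ in $L^1_{loc}$, which is needed both as a source for the averaging lemma and for the $L^2_v$ gradient bound on the good set. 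With these two corrections --- ellipticity only on the non-vacuum set of the limit, and averaging applied to $g^n(1-g^n)$ --- your outline becomes the paper's proof.
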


Our approach to proving this proposition will be to achieve compactness bit by bit.
First, we use the transport structure of the equation to achieve compactness in space-time of averages in velocity, which are the now famous \emph{velocity averaging} results.
In a second step, we study the parabolic part of the equation to deduce a certain compactness in the velocity variable.
Finally, we unify this two compactness properties with a proposition, leading the "full" compactness, in all three variables.

\subsection{Velocity averages: compactness in time and position}

Velocity averaging is a very common technique in the study of transport equations, and in particular kinetic equations, which allows us to find a certain type of compactness for the velocity averages of the solutions of that equation.

The main idea is that if $f = f(t,x,v)$ is a solution of the transport equation $\partial_t f + v \cdot \nabla_x f = g$, then the velocity average $\int f \varphi(v) \; dv$, where $\varphi \in C^\infty_c(\mathbb{R}^N_v)$, is shown to be more regular than $f$ and $g$ themselves.
For a sequence of solutions $f^n$, this translates into compactness for the averaged sequence $\int f^n \varphi(v) \; dv$.

The first results in this direction were shown independently in the mid-80's by Agoshkov \cite{agoshkov_1984} and Golse, Perthame and Sentis in \cite{golse_1988} in an $L^2$ setting.
We will use a generalized version taken from \cite{bouchut-2000}, reproduced here for the reader's convenience.

\begin{theorem}[Theorem 1.1.8 from \cite{bouchut-2000}]
	\label{bouchut-vel-avg}
	Let $\Omega$ be an open set of $\mathbb{R}_t \times \mathbb{R}^N_x$, $a: \mathbb{R}^M \to \mathbb{R}^N$ verify $\partial^\alpha a \in L^\infty_{loc}$ for $|\alpha| \leq m$, $m\in\mathbb{N}$, and
	\[
		\forall \sigma \in S^{N-1} \;
		\forall u \in \mathbb{R} \;
		|{v ; a(v) \cdot \sigma = u}| = 0.
	\]

	Let $f_n$ be a bounded sequence in $L^p_{loc}(\Omega \times \mathbb{R}^M)$ with $p>1$ verifying
	\[
		\partial_t f_n + \Div_x a(v)f_n =
		\sum_{|\alpha| \leq m} \partial^\alpha_v g_n^{(\alpha)}
		\text{ in }
		\Omega \times \mathbb{R}^M
	\]
	where the $g_n^{(\alpha)}$ are locally bounded in the space of measures $\mathcal{M}_{loc}(\Omega \times \mathbb{R}^M)$.

	Then, for any function $\psi \in C^\infty_c(\mathbb{R}^M)$,
	\[
		\rho^n_\psi (t,x) = \int f_n(t,x,v) \psi(v) \, dv
	\]
	is compact in $L^q_{loc}(\Omega)$ for any $q < p$.
\end{theorem}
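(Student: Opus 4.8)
This is Theorem~1.1.8 of \cite{bouchut-2000}, so I would follow the classical Fourier-analytic route for velocity averaging, organised in three steps plus the identification of the main obstacle. \emph{Step 1: reduce to compactly supported data.} Fix $\psi$ and an open set $\Omega' \Subset \Omega$, and choose $\theta \in C^\infty_c(\Omega)$ with $\theta \equiv 1$ on $\Omega'$ and $\zeta \in C^\infty_c(\mathbb{R}^M)$ with $\zeta \equiv 1$ on $\supp\psi$. Then $\tilde f_n := \theta\zeta f_n$ is bounded in $L^p$ with a fixed compact support, its $v$-average coincides with $\rho^n_\psi$ on $\Omega'$, and it solves
\[
	\partial_t \tilde f_n + a(v)\cdot\nabla_x \tilde f_n = \sum_{|\alpha|\le m} \partial_v^\alpha h_n^{(\alpha)},
\]
where the $h_n^{(\alpha)}$ collect $\theta\zeta g_n^{(\alpha)}$, the lower-order terms produced by the $v$-derivatives landing on $\zeta$, and the transport commutator $f_n(\partial_t\theta + a(v)\cdot\nabla_x\theta)\zeta$; all of these are uniformly bounded in $\mathcal{M}_c(\Omega\times\mathbb{R}^M)$ (the commutator because it is already bounded in $L^p_c$). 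Thus it suffices to treat this localised problem and show relative compactness of $\int \tilde f_n \psi\,dv$ in $L^q(\mathbb{R}^{1+N})$.

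\emph{Step 2: the core estimate in the model $L^2$ case.} Suppose momentarily that $p = 2$ and the sources lie in $L^2_c$. Taking the Fourier transform in $(t,x)$, with dual variable $k = (\tau,\xi)$ and transport symbol $p(k,v) = \tau + a(v)\cdot\xi$, the equation becomes $i\,p(k,v)\,\widehat{\tilde f}_n(k,v) = \sum_\alpha \partial_v^\alpha \widehat{h_n^{(\alpha)}}(k,v)$. For $|k| \ge 1$ and a threshold $\delta = \delta(|k|)\in(0,1)$ I would split $\psi$ smoothly into a piece supported where $|p(k,v)| \le \delta|k|$ (resonant) and a piece supported where $|p(k,v)| \ge \tfrac12\delta|k|$ (non-resonant). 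On the non-resonant piece one divides by $p(k,v)$ and integrates by parts in $v$, transferring each $\partial^\alpha_v$ onto $\psi$, the cutoff and $1/p$; since $\partial^\beta_v a \in L^\infty_{loc}$ for $|\beta|\le m$ one gets $|\partial^\alpha_v(1/p)| \lesssim \delta^{-1-|\alpha|}|k|^{-1}$ there, so this piece contributes at most $C\delta^{-1-m}|k|^{-1}\sum_\alpha\norm{\widehat{h_n^{(\alpha)}}(k,\cdot)}_{L^2_v}$, a genuine gain of $|k|^{-1}$. On the resonant piece, Cauchy--Schwarz yields the factor $|\{v\in\supp\zeta : |p(k,v)|\le\delta|k|\}|^{1/2}$; discarding the regime where $|\tau|$ dominates the symbol (where it is bounded below), this reduces to $|\{v\in\supp\zeta : |a(v)\cdot\sigma - u| \le \delta\}|$ with $\sigma\in S^{N-1}$ and $u$ in a bounded range, and the non-degeneracy hypothesis $|\{v : a(v)\cdot\sigma = u\}| = 0$, combined with compactness of $S^{N-1}\times\supp\zeta$ and continuity of $a$, upgrades this to a uniform modulus $\omega(\delta)\to 0$. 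Choosing $\delta = |k|^{-\eta}$ with $\eta>0$ small and invoking Plancherel, one finds that $\{\rho^n_\psi\}$ has a high-frequency tail vanishing uniformly in $n$, hence --- with the fixed compact support --- that it is relatively compact in $L^2_{loc}$. (A quantitative non-degeneracy would give a Sobolev gain; the qualitative hypothesis gives exactly compactness, which is all we need.)

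\emph{Step 3: remove the restrictions.} For a general measure source, note that $\mathcal{M}_c \hookrightarrow W^{-\kappa,2}$ when $\kappa > (1+N+M)/2$ and that $W^{-\kappa,2}_c \hookrightarrow W^{-\kappa',2}_{loc}$ compactly for $\kappa'>\kappa$, so $\{h_n^{(\alpha)}\}$ is relatively compact in $W^{-\kappa',2}_{loc}$; in particular mollifying in $(t,x)$ at scale $\varepsilon$ converges on this family uniformly in $n$. Split $h_n^{(\alpha)} = h_n^{(\alpha),\varepsilon} + (h_n^{(\alpha)} - h_n^{(\alpha),\varepsilon})$: the solution driven by $h_n^{(\alpha),\varepsilon}$ falls under Step 2 (its $v$-average is relatively compact in $L^2_{loc}$), while the solution driven by the remainder --- bounded in $L^p$, with source uniformly small in $W^{-\kappa'-m,2}_{loc}$ as $\varepsilon\to0$ --- has $v$-average uniformly small in $L^q_{loc}$ by the same dichotomy applied to it; hence $\{\rho^n_\psi\}$ is, uniformly in $n$, approximable by relatively compact families, so it is itself relatively compact. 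If $p\ge 2$ then $L^p_c\subset L^2_c$ and the above gives relative compactness in $L^2_{loc}$, which upgrades to $L^q_{loc}$ for every $q<p$ using the $L^p_{loc}$ bound and Vitali's theorem. If $1<p<2$, one localises the $(t,x)$-frequency dyadically, applies the $L^2$-gain on each annulus, passes $L^p\to L^2$ on the fixed compact $v$-support by a Bernstein-type inequality, resums with the $L^p$-boundedness of the Littlewood--Paley projections (Mikhlin--Hörmander), and interpolates against the crude $L^p$ bound --- the DiPerna--Lions--Meyer interpolation argument --- again obtaining relative compactness in $L^q_{loc}$, $q<p$. A final appeal to the Fréchet--Kolmogorov criterion (tightness, $L^q$-boundedness, uniform translation-equicontinuity) delivers the statement.

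\emph{Main obstacle.} The genuinely delicate point is the range $1<p<2$: there, division by the transport symbol is no longer a Plancherel identity, so one must invoke $L^p$ Fourier-multiplier theory and keep the non-degeneracy gain alive through interpolation with the non-quantitative $L^p$ bound. A secondary technical nuisance is obtaining the modulus $\omega$ uniformly over the frequency direction $\sigma\in S^{N-1}$, which is precisely where the $C^m$ regularity and the non-degeneracy of $a$, and the compact support provided by $\psi$ and $\zeta$, are genuinely used.
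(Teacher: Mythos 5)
First, a point of order: the paper does not prove this statement at all. It is quoted verbatim (as Theorem 1.1.8) from Bouchut's lecture notes \cite{bouchut-2000} and used as a black box, so there is no internal proof to compare yours against; your proposal can only be measured against the standard proof in the literature. Measured that way, your outline is the right one and reproduces the classical Golse--Lions--Perthame--Sentis / DiPerna--Lions--Meyer scheme: localisation in $(t,x,v)$, Fourier transform in $(t,x)$ with a resonant/non-resonant splitting governed by the symbol $\tau + a(v)\cdot\xi$, the qualitative non-degeneracy upgraded to a uniform modulus $\omega(\delta)$ by compactness (your upper-semicontinuity-plus-Dini argument is sound, and in fact does not even require continuity of $a$), integration by parts in $v$ to absorb the $\partial_v^\alpha$ acting on the sources, and then the two extensions (measure-valued sources, and $1<p<2$ by Littlewood--Paley interpolation), finishing with Vitali and Fr\'echet--Kolmogorov. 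The derivative counting $\abs{\partial_v^\alpha(1/p)} \lesssim \delta^{-1-\abs{\alpha}}\abs{k}^{-1}$ and the reduction of the resonant set to $\{v : \abs{a(v)\cdot\sigma - u}\le \delta\}$ with $(\sigma,u)$ in a compact set are both correct.

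Two points in your Step 3 would not survive as written. First, after splitting the source as $h_n^{(\alpha)} = h_n^{(\alpha),\varepsilon} + (h_n^{(\alpha)} - h_n^{(\alpha),\varepsilon})$ you speak of ``the solution driven by'' each piece; but the transport equation with a prescribed source does not determine a solution (there is no initial condition here), and $\tilde f_n$ does not decompose accordingly, so this step has no meaning as stated. The standard repair is to split not the solution but the representation of the average: either split only inside the non-resonant Fourier term, where the dependence on the source is linear and explicit, or use the resolvent decomposition $\tilde f_n = \lambda(\lambda+T)^{-1}\tilde f_n + (\lambda+T)^{-1}\sum_{\alpha}\partial_v^\alpha h_n^{(\alpha)}$ with $T = \partial_t + a(v)\cdot\nabla_x$, which is how the cited proof actually proceeds. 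Second, the remainder term then requires an averaging estimate whose right-hand side is merely small in a negative Sobolev space in \emph{all} variables, i.e.\ a variant of the basic $L^2$ lemma tolerating a loss of $(t,x)$-derivatives; this is true but must be stated and proved, because with the crude bound $\sup_k \| \widehat{h_n}(k,\cdot)\|_{\mathcal{M}_v} \le C$ alone the high-frequency tail integral over $\abs{k}\ge R$ diverges in dimension $1+N$ --- which is precisely why the mollification/compact-embedding step cannot be waved through. With these two repairs your sketch is a faithful account of the proof behind the cited theorem.
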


For reasons that will become clear when we discuss the parabolic part of the equation in depth, the key quantity to show compactness of solutions of this equation are not the solutions $g^n$ themselves, but the quantity $g^n(1-g^n)$.

\begin{lemma}
	\label{lemma-vel-avg}
	Let $(g^n)_n$ be a sequence of solutions to \eqref{eq-landau-like} as in Proposition \ref{prop-compactness}.
	For every $\varphi \in C^\infty_c(\Rv)$, the sequence of velocity averages 
	\[
		\int g^n (1-g^n) \varphi\, dv
	\]
	is compact in $L^1_{loc}((0,\infty) \times \mathbb{R}^N_x)$ and moreover, for every $R, T > 0$, we have
	\begin{equation}
		\label{ineq-aij-deriv}
		\int_0^T \intxv \Abar^n_{ij} \pdv{g^n}{v_i} \pdv{g^n}{v_j} \; dt \leq C_{T,R},
	\end{equation}
	for some constant $C_{T,R} > 0$, independent of $n$.
\end{lemma}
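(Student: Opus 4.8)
The plan is to split the work into the two assertions: the energy-type estimate \eqref{ineq-aij-deriv}, which I would prove first because the compactness of the velocity averages will feed on it, and then the application of Theorem \ref{bouchut-vel-avg} to the quantity $g^n(1-g^n)$.

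\emph{Step 1: the estimate \eqref{ineq-aij-deriv}.} I would obtain this from an entropy/energy identity for \eqref{eq-landau-like}. Multiply the equation by a renormalization of $g^n$ — the natural candidate is $\beta(g^n)$ with $\beta'(s) = \log\frac{s}{1-s}$ (so $\beta(s) = s\log s + (1-s)\log(1-s)$, the quantum entropy), localized by a cutoff $\chi(t,x,v)$ with $\chi = 1$ on $(0,T)\times B_R$. Integrating by parts, the transport term $v\cdot\nabla_x g^n$ contributes only terms involving $\nabla_x\chi$ (bounded, using $0\le g^n\le 1$ and the mass bound), the $\partial_t$ term contributes boundary-in-time terms controlled by $\iint s(g^n_0)$ and by $\|\nabla_t\chi\|$, and the $\Cbar,\Dbar$-type lower order terms $\Bbar^n g^n(1-g^n)$ produce, after integration by parts against $\beta'(g^n)\nabla_v g^n = \nabla_v s(g^n)$, terms that are absorbed using the $L^1_{loc}$ bound on $\Bbar^n$ and the $L^\infty$ bound on $g^n$ together with a Cauchy–Schwarz/Young splitting against the good term. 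The principal term is
\[
	\int_0^T\intxv \chi\, \beta''(g^n)\, \Abar^n_{ij}\pdv{g^n}{v_i}\pdv{g^n}{v_j}\,dt,
\]
and since $\beta''(s) = \frac{1}{s(1-s)}\ge 4$ on $[0,1]$ while $\Abar^n$ is nonnegative by \eqref{ineq-uniform-ellip}, this dominates $4\int_0^T\intxv\chi\,\Abar^n_{ij}\partial_{v_i}g^n\partial_{v_j}g^n\,dt$; moving everything else to the other side gives \eqref{ineq-aij-deriv} with $C_{T,R}$ depending only on the a priori bounds, $\|\chi\|_{W^{1,\infty}}$, and the uniform $L^1_{loc}$ bounds on $\Abar^n,\partial_{v_j}\Abar^n_{ij},\Bbar^n,\partial_{v_i}\Bbar^n_i$. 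A cleaner alternative, if the renormalization calculus against merely $L^2_{loc}(H^1_{loc})$ solutions is delicate, is to test with $\beta(g^n)$ replaced by $g^n$ itself (a genuine weak-formulation pairing) to get an $L^2$-in-$v$ control of $\sqrt{\Abar^n}\nabla_v g^n$ on compacts directly; I would present whichever makes the integration by parts rigorous given only Definition \ref{def-weak-solution}.

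\emph{Step 2: compactness of $\int g^n(1-g^n)\varphi\,dv$.} The point is that $h^n := g^n(1-g^n)$ solves a transport equation with the right-hand side of the form required by Theorem \ref{bouchut-vel-avg}. Compute, using \eqref{eq-landau-like},
\[
	\partial_t h^n + v\cdot\nabla_x h^n = (1-2g^n)\Div_v\!\left[\Abar^n\nabla_v g^n + \Bbar^n h^n\right].
\]
Expand the right side and push the outer derivative out: write $(1-2g^n)\partial_{v_i}(\Abar^n_{ij}\partial_{v_j}g^n) = \partial_{v_i}\!\big[(1-2g^n)\Abar^n_{ij}\partial_{v_j}g^n\big] + 2\,\Abar^n_{ij}\partial_{v_i}g^n\partial_{v_j}g^n$, and similarly for the $\Bbar^n$ term. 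Thus $\partial_t h^n + v\cdot\nabla_x h^n = \partial_{v_i}G^n_i + H^n$ where $G^n_i = (1-2g^n)(\Abar^n_{ij}\partial_{v_j}g^n + \Bbar^n_i h^n)$ and $H^n = 2\Abar^n_{ij}\partial_{v_i}g^n\partial_{v_j}g^n + (\text{lower order in }\Bbar^n)$. By Step 1 and Cauchy–Schwarz (with $0\le g^n\le1$, $\Abar^n\ge0$ so $|\Abar^n_{ij}\partial_{v_j}g^n|\le (\Abar^n_{kk})^{1/2}(\Abar^n_{ij}\partial_{v_i}g^n\partial_{v_j}g^n)^{1/2}$), both $G^n_i$ and $H^n$ are uniformly bounded in $L^1_{loc}$, hence in $\mathcal{M}_{loc}$, on $(0,\infty)\times\Rxv$. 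The dispersion condition $|\{v: v\cdot\sigma = u\}| = 0$ is trivially satisfied for the transport field $a(v) = v$. Theorem \ref{bouchut-vel-avg} (with $m=1$, $p$ any exponent $>1$ — here we even have $h^n$ bounded in $L^\infty$, so take $p$ large) then gives that $\rho^n_\varphi = \int h^n\varphi\,dv$ is compact in $L^q_{loc}$ for every $q<\infty$, in particular in $L^1_{loc}((0,\infty)\times\Rx)$.

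\emph{Main obstacle.} The delicate point is Step 1: justifying the renormalized/tested identity for solutions that are only in $L^2_{loc}((0,\infty)\times\Rx;H^1_{loc}(\Rv))$ with an $L^\infty$ transport-direction-only regularity. One cannot simply multiply a distributional equation by $\beta'(g^n)$; I would proceed by a double regularization (mollify in all variables, use the commutator lemma of DiPerna–Lions along the transport field $\partial_t + v\cdot\nabla_x$ to control the error, then pass to the limit in the mollification), exploiting that the $v$-regularity needed for the parabolic terms is already present by hypothesis and that $\beta'$ is only mildly singular at $0,1$ which is harmless since $0\le g^n\le1$ keeps us away from the boundary in the relevant integrals — or, if $g^n$ does touch $0$ or $1$, truncating $\beta$ and passing to the limit by monotone convergence. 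Getting the constant in \eqref{ineq-aij-deriv} to be genuinely $n$-independent requires that every error term be controlled purely by the stated uniform bounds, which is where one must be careful with the lower-order $\Bbar^n$ contributions, handled by Young's inequality absorbing a small multiple of the good term.
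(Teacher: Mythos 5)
Your overall architecture matches the paper's: first an energy-type estimate giving \eqref{ineq-aij-deriv}, then a rewriting of the equation satisfied by $g^n(1-g^n)$ in the divergence form required by Theorem \ref{bouchut-vel-avg}. The differences are in the choice of renormalization and in how the diffusive flux is handled. The paper works exclusively with the polynomial renormalization $s\mapsto s(1-s)$: it mollifies $g^n$ in $(t,x)$ only, multiplies the mollified equation by $(1-2g^n_m)$, and passes $m\to\infty$ --- this is precisely the DiPerna--Lions commutator argument you invoke, carried out by hand, and it serves simultaneously for the estimate and for the transport equation of $G^n=g^n(1-g^n)$ (the good term $2\Abar^n_{ij}\partial_{v_i}g^n\partial_{v_j}g^n$ appears as a source in that equation, is nonnegative, and is isolated by choosing the test function $\equiv 1$ on $[0,T]\times B_R\times B_R$). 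Your ``cleaner alternative'' (testing with $g^n$ itself) is essentially this. Your primary suggestion, the entropy renormalization $\beta'(s)=\log\frac{s}{1-s}$, is workable but strictly harder: $\beta'$ is singular at $0$ and $1$, which forces the truncation you mention, and it buys nothing here since the polynomial renormalization already produces the quadratic term with a favorable sign. For Step 2, the paper integrates by parts once more to write the diffusive term as $\partial^2_{v_iv_j}(\Abar^n_{ij}G^n)-\partial_{v_i}(\partial_{v_j}\Abar^n_{ij}\,G^n)$, so only the coefficients and $G^n$ (bounded by $1$) need to be controlled and $m=2$ is used in Theorem \ref{bouchut-vel-avg}; your version keeps $m=1$ at the price of the Cauchy--Schwarz bound $|\Abar^n_{ij}\partial_{v_j}g^n|\le(\Abar^n_{ii})^{1/2}(\Abar^n_{jk}\partial_{v_j}g^n\partial_{v_k}g^n)^{1/2}$, which is valid for nonnegative matrices and, combined with Step 1, does give a uniform $L^1_{loc}$ bound. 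Both variants are legitimate.

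One claim in your Step 1 would fail if pursued literally: you cannot absorb the term $\int\Bbar^n_i\,\partial_{v_i}g^n\,\chi$ (which is what the $\Bbar^n$ contribution reduces to under the entropy renormalization, since $g^n(1-g^n)\beta''(g^n)=1$) into the good term by Young's inequality, because $\Abar^n$ is only nonnegative --- this is the whole point of Section 3.2 --- so $|\Bbar^n\cdot\nabla_v g^n|$ is not controlled by $\Abar^n_{ij}\partial_{v_i}g^n\partial_{v_j}g^n$. The correct mechanism, and the one the paper uses (in its $(1-2g^n)$ incarnation, where the relevant quantity is $\Bbar^n_i g^n(1-g^n)\partial_{v_i}g^n=\partial_{v_i}\bigl(\tfrac{(g^n)^2}{2}-\tfrac{(g^n)^3}{3}\bigr)\Bbar^n_i$), is a further integration by parts in $v$, moving the derivative onto $\Bbar^n_i\chi$ and invoking the uniform $L^1_{loc}$ bounds on $\Bbar^n_i$ and $\partial_{v_i}\Bbar^n_i$ together with $0\le g^n\le 1$. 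With that replacement your argument closes.
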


\begin{proof}
	Let us notate $G^n = g^n (1-g^n)$.
	Throughout this proof, we will use the notation $\inttxv$ for the integral $\int_{0}^{\infty} \iint_{\Rxv} dxdvdt$. 
	
	Theorem \ref{bouchut-vel-avg} allows us to deduce the compactness of the velocity averages of $g^n$, after rewriting the right-hand side of the equation.
	However, we are unable to deduce the compactness of velocity averages of $G^n$ from the compactness of the velocity averages $g^n$ alone.
	The alternative is then to write the equation for $G^n$ and apply Theorem \ref{bouchut-vel-avg} to this new one.
	The process of passing from \eqref{eq-landau-like} to an equation for $G^n$ is not direct, since such a process would require applying a product rule to $g^n(1-g^n)$, which cannot be justified due to the low regularity in the variables $t$ and $x$.
	
	The way to overcome this limitation is to consider approximations of the $g^n$ functions.
	Let then $\chi = \chi(t,x)$ be a $C^\infty_c((0,\infty) \times \mathbb{R}^N_x)$ positive function such that $\int \chi \; dtdx = 1$, consider the mollifying sequence $\chi_m(t,x) = m^N \chi(mt,mx)$ and define $g^n_m = \chi_m * g^n$.
	We thus have $g^n_m \to g^n$ in $L^2_{loc}((0,\infty) \times \Rxv)$.
	Take the equation in $\mathcal{D}'((0,\infty) \times \Rxv)$,
	\[
		\pdv{g^n}{t} + v \cdot \nabla_x g^n = 
		\pdv{}{v_i} \left( \Abar^n_{ij} \pdv{g^n}{v_j} + \Bbar^n_i g^n (1-g^n) \right).
	\]
	Applying the approximations we have, commuting convolutions with derivatives
	\[
		\pdv{g^n_m}{t} + v \cdot \nabla_x g^n_m = 
		\pdv{}{v_i} \left[ \chi_m * \left(\Abar^n_{ij} \pdv{g^n}{v_j}\right) \right]
		+ \chi_m * \left(\Bbar^n_i g^n (1-g^n)\right).
	\]
	
	Multiplying both sides of the equation by $(1-2g^n_m)$ and noting $G^n_m \equiv g^n_m (1-g^n_m)$, we have
	\begin{multline*}
		\pdv{G^n_m}{t} + v \cdot \nabla_x G^n_m
		= \pdv{}{v_i} \left[
				\chi_m * \left(	\Abar^n_{ij} \pdv{g^n}{v_j} \right) (1-2g^n_m)
		\right]
		+ 2 \chi_m * \left(\Abar^n_{ij} \pdv{g^n}{v_j}\right) \pdv{g^n_m}{v_i}\\
		+ \pdv{}{v_i} \left[
				\chi_m * \left(	\Bbar^n_i g^n(1-g^n) \right) (1-2g^n_m)
		\right]
		+ 2 \chi_m * \left( \Bbar^n_i g^n(1-g^n) \right) \pdv{g^n_m}{v_i}.
	\end{multline*}
	
	We multiply this equation on both sides by $\varphi \in \mathcal{D}((0,\infty) \times \Rxv)$ and integrate by parts to find a formulation in the sense of distributions
	\begin{multline*}
		- \inttxv G^n_m \pdv{\varphi}{t} - \inttxv G^n_m v_i \pdv{\varphi}{x_i}
		= - \inttxv \chi_m * \left(	\Abar^n_{ij} \pdv{g^n}{v_j} \right) (1-2g^n_m) \pdv{\varphi}{v_i}\\
		+ 2 \inttxv \chi_m * \left(\Abar^n_{ij} \pdv{g^n}{v_j}\right) \pdv{g^n_m}{v_i} \varphi
		- \inttxv \chi_m * \left( \Bbar^n_i g^n(1-g^n) \right) (1-2g^n_m) \pdv{\varphi}{v_i}\\
		+ 2 \inttxv \chi_m * \left( \Bbar^n_i g^n(1-g^n) \right) \pdv{g^n_m}{v_i} \varphi
	\end{multline*}
	and by the standard theorems of approximation by convolution, we have that, passing to the limit $m \to \infty$,
	\begin{multline*}
		- \inttxv G^n \pdv{\varphi}{t} - \inttxv G^n v_i \pdv{\varphi}{x_i}
		= - \inttxv \Abar^n_{ij} \pdv{g^n}{v_j} (1-2g^n) \pdv{\varphi}{v_i}
		+ 2 \inttxv \Abar^n_{ij} \pdv{g^n}{v_j} \pdv{g^n}{v_i} \varphi\\
		- \inttxv \Bbar^n_i g^n(1-g^n) (1-2g^n) \pdv{\varphi}{v_i}
		+ 2 \inttxv \Bbar^n_i g^n(1-g^n) \pdv{g^n}{v_i} \varphi.
	\end{multline*}
	
	Now, since we have $g^n, 1-g^n \in L^2_{loc}((0,\infty) \times \Rx; H^1_{loc}(\Rv))$, the product rule is valid.
	We have $\pdv{(g^n(1-g^n))}{v_k} \in L^1_{loc}((0,\infty) \times \Rxv)$ for every $k = 1, 2, \dots, N$ and
	\[
		\pdv{(g^n(1-g^n))}{v_k} = (1-2g^n) \pdv{g^n}{v_k}.
	\]
	
	Let $\psi_m = \psi_m(t,x,v)$ be a mollifying sequence and consider $(\Abar^n_{ij})_m \equiv \psi_m * \Abar^n_{ij}$.
	This way, we have $(\Abar^n_{ij})_m \to \Abar^n_{ij}$ and $\pdv{(\Abar^n_{ij})_m}{v_k} \to \pdv{\Abar^n_{ij}}{v_k}$ in $L^p_{loc}((0,\infty) \times \Rxv)$, $p<\infty$.
	Since $(\Abar^n_{ij})_m \pdv{\varphi}{v_i} \in C^\infty_c((0,\infty) \times \Rxv)$, we then have, by the definition of weak derivative,
	\begin{align*}
		\inttxv (\Abar^n_{ij})_m \pdv{g^n}{v_j} (1-2g^n) \pdv{\varphi}{v_i}
		&= \inttxv \pdv{(g^n(1-g^n))}{v_j} (\Abar^n_{ij})_m \pdv{\varphi}{v_i}\\
		&= - \inttxv g^n(1-g^n) \pdv{}{v_j} \left( (\Abar^n_{ij})_m \pdv{\varphi}{v_i} \right)\\
		&= - \inttxv g^n(1-g^n) \pdv{(\Abar^n_{ij})_m}{v_j} \pdv{\varphi}{v_i} 
			- \inttxv g^n(1-g^n) (\Abar^n_{ij})_m \mpdv{\varphi}{v_i}{v_j}
	\end{align*}
	thus passing to the limit $m \to \infty$, we have
	\[
		\inttxv \Abar^n_{ij} \pdv{g^n}{v_j} (1-2g^n) \pdv{\varphi}{v_i}
		= - \inttxv g^n(1-g^n) \pdv{\Abar^n_{ij}}{v_j} \pdv{\varphi}{v_i} 
			- \inttxv g^n(1-g^n) \Abar^n_{ij} \mpdv{\varphi}{v_i}{v_j}
	\]
	and similarly, one can show that
	\[
		\inttxv \Bbar^n_i g^n(1-g^n) \pdv{g^n}{v_i} \varphi
		= - \inttxv \pdv{\Bbar^n_i}{v_i} \left( \frac{(g^n)^2}{2} - \frac{(g^n)^3}{3} \right) \varphi
			- \inttxv \Bbar^n_i \left( \frac{(g^n)^2}{2} - \frac{(g^n)^3}{3} \right) \pdv{\varphi}{v_i},
	\]
	meaning the equation can be rewritten as
	\begin{multline}
		\label{eq-Gn-distributions-integral}
		- \inttxv G^n \pdv{\varphi}{t} 
		- \inttxv G^n v_i \pdv{\varphi}{x_i}
		= \inttxv \Abar^n_{ij} g^n(1-g^n) \mpdv{\varphi}{v_i}{v_j}
		+ \inttxv \pdv{\Abar^n_{ij}}{v_j} g^n(1-g^n) \pdv{\varphi}{v_i}\\
		+ 2 \inttxv \Abar^n_{ij} \pdv{g^n}{v_i} \pdv{g^n}{v_j} \varphi
		- \inttxv \Bbar^n_i g^n(1-g^n)(1-2g^n) \pdv{\varphi}{v_i}\\
		- 2 \inttxv \pdv{\Bbar_i^n}{v_i} \left(\frac{(g^n)^2}{2} - \frac{(g^n)^3}{3}\right) \varphi
		- 2 \inttxv \Bbar_i^n \left(\frac{(g^n)^2}{2} - \frac{(g^n)^3}{3}\right) \pdv{\varphi}{v_i}.
	\end{multline}
	
	Then, rearranging the equation and using that $0 \leq g^n \leq 1$, we have
	\begin{align*}
		\inttxv \Abar^n_{ij} \pdv{g^n}{v_i} \pdv{g^n}{v_j} \varphi \,dxdv
		\leq &\left\| \pdv{\varphi}{t} \right\|_{L^1}
			+ \| v \cdot \nabla_x \varphi \|_{L^1}
			+ \left\| \pdv{\Abar^n_{ij}}{v_j} \right\|_{L^1(\supp \varphi)} \left\| \pdv{\varphi}{v_i} \right\|_{L^\infty}\\
			&+ \left\| \Abar^n_{ij} \right\|_{L^1(\supp \varphi)} \left\| \mpdv{\varphi}{v_i}{v_j} \right\|_{L^\infty}
			+ 3 \left\| \Bbar^n_i \right\|_{L^1(\supp \varphi)} \left\| \pdv{\varphi}{v_i} \right\|_{L^\infty}\\
			&+ 2 \left\| \pdv{\Bbar^n_i}{v_i} \right\|_{L^1(\supp \varphi)} \| \varphi \|_{L^\infty}
	\end{align*}
	and thus, given the uniform boundedness of the coefficients, the inequality \eqref{ineq-aij-deriv} follows by taking $\varphi$ such that $\varphi \equiv 1$ in $[0,T] \times B_R \times B_R$.
	In particular, this implies that $\Abar^n_{ij} \pdv{g^n}{v_i} \pdv{g^n}{v_j}$ is uniformly bounded in $L^1_{loc}((0,T) \times \Rxv)$.
	
	Now we prove the compactness of the velocity averages.
	Note that the equality \eqref{eq-Gn-distributions-integral} means we have
	\begin{multline*}
		\pdv{G^n}{t} + v_i \pdv{G^n}{x_i} = 
		\mpdv{}{v_i}{v_j} \left[ \Abar^n_{ij} g^n (1-g^n) \right]\\
		+ \pdv{}{v_i} \left[ \Bbar^n_i  g^n(1-g^n)(1-2g^n)
				- \pdv{\Abar^n_{ij}}{v_j} g^n(1-g^n)
				+ 2 \Bbar_i^n \left(\frac{(g^n)^2}{2} - \frac{(g^n)^3}{3}\right) \right]\\
		+ 2 \Abar^n_{ij} \pdv{g^n}{v_i} \pdv{g^n}{v_j}
		- 2 \pdv{\Bbar_i^n}{v_i} \left(\frac{(g^n)^2}{2} - \frac{(g^n)^3}{3}\right)		
	\end{multline*}
	in $\mathcal{D}'((0,\infty) \times \Rxv)$.
	Since we have that $0 \leq g^n \leq 1$ for all $n$, it follows that
	\begin{gather*}
		| \Abar^n_{ij} g^n (1-g^n) | \leq |\Abar^n_{ij}|\\
		\left|  \Bbar^n_i  g^n(1-g^n)(1-2g^n) - \pdv{\Abar^n_{ij}}{v_j} g^n(1-g^n)
				+ 2 \Bbar_i^n \left(\frac{(g^n)^2}{2} - \frac{(g^n)^3}{3}\right) \right| 
		\leq 3 |\Bbar_i^n| + \left| \pdv{\Abar^n_{ij}}{v_j} \right|\\
		\left| \Abar^n_{ij} \pdv{g^n}{v_i} \pdv{g^n}{v_j} - \pdv{\Bbar_i^n}{v_i} \left(\frac{(g^n)^2}{2} - \frac{(g^n)^3}{3}\right) \right|
		\leq \Abar^n_{ij} \pdv{g^n}{v_i} \pdv{g^n}{v_j} + \left| \pdv{\Bbar_i^n}{v_i} \right|,
	\end{gather*}
	implying each of this sequences of functions is uniformly bounded in $L^1_{loc}((0,\infty) \times \Rxv)$.
	Thus, Theorem \ref{bouchut-vel-avg} applies and we have that the velocity averages of $g^n (1-g^n)$ are compact.
\end{proof}

\subsection{Quasi-ellipticity of the diffusion matrix: compactness in velocity}

Here we will begin to explore the parabolic part of the equation in order to find compactness in the $v$ derivatives of $g^n$.

The technique here is hugely inspired by the parabolic case.
Indeed, if the equations \eqref{eq-landau-like} were uniformly parabolic, then the matrices $\Abar^n$ would be uniformly elliptic, implying there exists some $\theta > 0$ such that $\Abar^n_{ij} \eta_i \eta_j \geq \theta |\eta|^2$ for every $\eta \in \mathbb{R}^N$.

This, together with inequality \eqref{ineq-aij-deriv}, gives directly a bound for the $v$ derivatives or the sequence $g^n$, which would then imply compactness in the $v$ variable.
Thus, one may seek to show some kind of ellipticity estimate uniform in $n$ for the sequence $\abar^n = (\abar^n_{ij})_{ij} = (a_{ij} *_v f(1-f))_{ij}$ of the LFD equation.

For the homogeneous case Bagland \cite{bagland_2004} has shown that the a priori bounds for entropy, mass and kinetic energy can control the size of $g(1-g)$ from below, allowing us to prove an ellipticity estimate using the techniques from Desvillettes-Villani \cite{desvillettes-villani_2000}.
However, this technique doesn't translate to the non-homogeneous case, and we weren't able to prove this type of ellipticity using just the a priori estimates.
Following Lions \cite{lions_1994}, we will prove a weaker, more specific, kind of ellipticity that turns out to be enough for reaching compactness.

Let then $\eta \in \mathbb{R}^{N}$.
The estimate \eqref{ineq-uniform-ellip} gives us that
\begin{equation*}
	\Abar^n_{ij} \eta_i \eta_j \geq \int_{|v_*| \leq R'} \nu
	\left[ 1 - \biggl( \frac{v-v_*}{|v-v_*|} \cdot \frac{\eta}{|\eta|} \biggr)^2 \right] |\eta|^2 g^n_* (1-g^n_*) \,dv_*,
	\;\forall v \in \mathbb{R}^N, |v| \leq R
\end{equation*}

Here we see that there are essentially two problems preventing the right-hand side from being elliptic: an angular problem, when we have a large angle between $v-v_*$ and $\eta$ and a quantum vacuum one, when the functions $g(1-g)$ are too small.

The first one is purely geometrical and can be dealt with truncation arguments.
The second one, on the other hand, depends entirely on the behavior of the functions $g^n(1-g^n)$.

Define $G^n \equiv g^n(1-g^n)$.
Intuitively, we expect that for $n$ sufficiently large, we have $\Abar_{ij}^n$ elliptic in the regions where $G^n$ is strictly positive.
Asymptotically, one then expects the uniform elliptical behavior of the sequence $\Abar_{ij}^n$ to be described by some limit of the $G^n$, say, $G$.
The following result establishes that, in the regions of space-time where $G$ is not a vacuum (i.e., the density $\rho$ is strictly greater than some $\alpha > 0$), then we have some asymptotically uniform ellipticity, with the constant depending on $\alpha$, up to some set of small measure.

\begin{lemma}
	\label{lemma-ellipticity}
	Fix $T, R > 0$.
	Let $G^n \equiv g^n(1-g^n)$ be a sequence and $G$ a function of $L^\infty((0,T) \times \Rxv)$ such that
	\[
		\int G^n \varphi\, dv \to \int G \varphi\, dv
		\text{ in }
		L^1([0,T] \times B_R)
	\]
	for every $\varphi \in L^1(\Rv)$.
	Moreover, let
	\[
		K_{\alpha} \equiv \{ (t,x) \in [0,T] \times B_{R} : \rho(t,x) > \alpha \},
	\]
	where $\rho(t,x) = \int G \,dv$ and let $\Abar^n$ be a sequence satisfying \eqref{ineq-uniform-ellip}.
	Then, for every $\alpha, \varepsilon > 0$ there exists a measurable set $|E| < \varepsilon$ such that
	\[
		\Abar^n_{ij}(t,x,v) \eta_i \eta_j \geq C(\alpha, \varepsilon) |\eta|^2,
	\]
	for $(x,t,v,\eta) \in (K_{\alpha} \cap E^c) \times B_R \times \mathbb{R}^N$ and $n \geq n_0(\varepsilon, \alpha)$.
\end{lemma}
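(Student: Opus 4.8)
The plan is to turn \eqref{ineq-uniform-ellip} into the claimed ellipticity by defeating, one at a time, the two obstructions mentioned above — the angular degeneracy of $1-\bigl(\frac{v-v_*}{|v-v_*|}\cdot\frac{\eta}{|\eta|}\bigr)^2$ and the possible smallness (quantum vacuum) of $G^n$ — after a good choice of the truncation radius $R'$ and after discarding a small exceptional set.

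First I would carry out the \emph{angular reduction}. Fix $\alpha,\varepsilon>0$, let $\delta\in(0,1]$ be a parameter to be fixed at the end, and for $|v|\le R$ and $\eta\neq 0$ split the domain $\{|v_*|\le R'\}$ of the integral in \eqref{ineq-uniform-ellip} into $A_{v,\eta}=\bigl\{\,|v_*|\le R':\ \bigl|\frac{v-v_*}{|v-v_*|}\cdot\frac{\eta}{|\eta|}\bigr|\le 1-\delta\,\bigr\}$, on which $1-\bigl(\frac{v-v_*}{|v-v_*|}\cdot\frac{\eta}{|\eta|}\bigr)^2\ge\delta$, and its complement in $B_{R'}$. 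That complement lies in two circular cones with vertex $v$, half-angle $\arccos(1-\delta)=O(\sqrt{\delta})$ and length $\le R+R'$, hence has Lebesgue measure $\le C(R,R')\,\delta^{(N-1)/2}$; since $0\le G^n\le\frac14$, putting $h^n(t,x)=\int_{|v_*|\le R'}G^n(t,x,v_*)\,dv_*$, the bound \eqref{ineq-uniform-ellip} gives
\[
	\Abar^n_{ij}(t,x,v)\,\eta_i\eta_j\ \geq\ \nu(R,R')\,\delta\,|\eta|^2\,\bigl(h^n(t,x)-C(R,R')\,\delta^{(N-1)/2}\bigr),
\]
uniformly in $|v|\le R$ and $\eta\in\mathbb{R}^N$; here $N\ge2$ enters, so that $\delta^{(N-1)/2}\to0$. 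This reduces the lemma to bounding $h^n$ from below, uniformly in $n$, on a large subset of $K_\alpha$.

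Next I would fix $R'$ by a tail estimate and compare $h^n$ with its limit. From $G^n\le g^n$ and the a priori energy bound $\sup_t\intxv g^n|v|^2\le C$ of Proposition \ref{prop-compactness} one gets $\int_0^T\!\int_{B_R}\!\int|v_*|^2G^n\,dv_*\,dx\,dt\le CT$; testing the hypothesis with $\varphi=\mathbbm{1}_{\{R'<|v_*|\le M\}}\in L^1(\Rv)$ and letting $M\to\infty$ yields $\int_0^T\!\int_{B_R}\!\int_{|v_*|>R'}G\,dv_*\,dx\,dt\le CT/R'^2$. By Chebyshev, $E_1=\{(t,x)\in[0,T]\times B_R:\int_{|v_*|>R'}G\,dv_*>\alpha/2\}$ has $|E_1|\le 2CT/(\alpha R'^2)<\varepsilon/2$ once $R'=R'(\alpha,\varepsilon,T)$ is large enough; and on $K_\alpha\setminus E_1$ we then have $h:=\int_{|v_*|\le R'}G\,dv_*=\rho-\int_{|v_*|>R'}G\,dv_*>\alpha/2$. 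Taking $\varphi=\mathbbm{1}_{B_{R'}}$ in the hypothesis gives $h^n\to h$ in $L^1([0,T]\times B_R)$; since this set has finite measure, after passing to a subsequence $h^n\to h$ a.e., and Egorov's theorem provides a set $E_2$ with $|E_2|<\varepsilon/2$ and $n_0=n_0(\alpha,\varepsilon)$ such that $|h^n-h|<\alpha/4$ on $([0,T]\times B_R)\setminus E_2$ for all $n\ge n_0$.

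To finish, put $E=E_1\cup E_2$, so $|E|<\varepsilon$: for $(t,x)\in K_\alpha\cap E^c$ and $n\ge n_0$ one has $h^n(t,x)>h(t,x)-\alpha/4>\alpha/4$, and choosing $\delta=\delta(\alpha,\varepsilon)$ small enough that $C(R,R')\delta^{(N-1)/2}\le\alpha/8$, the display of the first step yields $\Abar^n_{ij}(t,x,v)\eta_i\eta_j\ge\nu(R,R')\,\delta\,(\alpha/4-\alpha/8)\,|\eta|^2=:C(\alpha,\varepsilon)|\eta|^2$ for every $v\in B_R$ and $\eta\in\mathbb{R}^N$, which is the assertion. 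I expect the genuine difficulty to be this last uniformity in $n$ with a single exceptional set: $L^1$ convergence of the velocity averages does not by itself control $h^n$ pointwise from below, so one is forced to pass to an a.e.-convergent subsequence and invoke Egorov — which is exactly why the conclusion is only asked for $n\ge n_0$ (along that subsequence, the form actually used when this lemma feeds into Proposition \ref{prop-compactness}). The remaining ingredients — the elementary cone-volume bound and the transfer of the $|v|^2$ a priori estimate from $G^n$ to $G$ — are routine.
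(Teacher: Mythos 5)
Your argument is correct, and it reaches the conclusion by a genuinely different route from the paper's. The paper treats the angular and radial truncations simultaneously through the monotone family $V(v,\mu,\eta)$ (with $R'=(1-\mu)^{-1}$ tied to the angle parameter): it first sends $\mu\to1$, using Beppo Levi, then Dini's theorem to get uniformity in $(v,\eta)$, then Egorov in $(t,x)$; and then sends $n\to\infty$, using an equicontinuity argument in $(v,\eta)$ for $\rho^n_\mu$ plus a second Egorov. You instead make the angular step fully quantitative: the set of $v_*\in B_{R'}$ with $\bigl|\frac{v-v_*}{|v-v_*|}\cdot\frac{\eta}{|\eta|}\bigr|>1-\delta$ is a double cone of measure $O(\delta^{(N-1)/2})$, and since $0\le G^n\le\frac14$ its contribution is harmless, so everything reduces to a lower bound on the plain truncated density $h^n=\int_{|v_*|\le R'}G^n\,dv_*$, which no longer depends on $(v,\eta)$ at all. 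That removes the need for Dini and for the equicontinuity step, leaves a single application of Egorov, and produces an explicit constant $C(\alpha,\varepsilon)=\nu(R,R')\,\delta\,\alpha/8$. The one caveat is your choice of $R'$: you invoke the second-moment bound $\sup_t\intxv g^n|v|^2\le C$ from Proposition \ref{prop-compactness}, which is not among the hypotheses of the lemma as stated (the paper avoids it precisely by letting $R'\to\infty$ inside the same monotone limit). This is harmless in the only application of the lemma, but if you want to stay within its stated hypotheses, replace the Chebyshev step by the monotone convergence $\int_{|v_*|>R'}G\,dv_*\downarrow0$ a.e.\ as $R'\to\infty$ together with one more Egorov set. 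Your passage to an a.e.-convergent subsequence of $h^n$ before applying Egorov is unavoidable, and the paper's proof makes the same silent extraction when it deduces a.e.\ convergence of $\rho^n_\mu$ from the $L^1$ hypothesis, so flagging it as you did is appropriate.
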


\begin{proof}
	We can suppose, without loss of generality, that $\eta \in \mathbb{S}^{N-1}$.

	To deal with the angular problem impeaching ellipticity, we introduce the parameter $\mu \in [0,1)$ and define the following set, which imposes the $R'$ constraint and truncates the "bad" angles,
	\[
		V(v,\mu,\eta) = \left\{
			v_* \in \mathbb{R}^N
			\text{ s.t. }
			\frac{v-v_*}{|v-v_*|} \cdot \eta \leq \mu
			\text{ and }
			|v_*| \leq (1-\mu)^{-1}
			\right\}
	\]
	and the truncation of the limit $\rho$,
	\[
		\rho_{\mu}(x,t,v,\eta) = \int \mathbbm{1}_{V(v,\mu,\eta)}(v_*) G(v_*) \,dv_*.
	\]

	This way, the uniform quasi-ellipticity estimate \eqref{ineq-uniform-ellip} implies that for every $\mu \in (0,1)$ there exists a $\nu(\mu)$ such that
	\[
		\Abar^n_{ij}(t,x,v) \eta_i \eta_j \geq \nu(\mu) \int \mathbbm{1}_{V(v,\mu,\eta)}(v_*) G^n_* \,dv_*.
	\]

	We then define the approximated quantum density and its $V$ truncation:
	\[
		\rho^n(x,t) = \int G^n_* \,dv_*,\;
		\rho^n_{\mu}(x,t,v,\eta) = \int \mathbbm{1}_{V(v,\mu,\eta)}(v_*) G^n_* \,dv_*.
	\]
	Notice that $\rho_{\mu}$ and $\rho^n_{\mu}$ are continuous in $(v,\eta)$ by dominated convergence.

	First, as $\mu \nearrow 1$, we have $V(t,x,v,\eta) \nearrow \mathbb{R}^N \setminus (v + \mathbb{R} \eta)$, and thus by Beppo Levi's lemma,
	\[
		\rho_{\mu}(t,x,v,\eta) \to \rho(t,x) \text{ a.e. } (t,x) \in (0,T)\times B_R,\, \forall (v,\eta) \in B_R \times S^{N-1},
	\]
	and we upgrade the pointwise convergence in the $(v, \eta)$ variables to a uniform convergence by Dini's theorem.

	To obtain uniformity in $(t,x)$ we have, by Egorov's theorem, that there exists a measurable set $E_1$, with $|E_1| < \varepsilon/2$, such that the convergence above is uniform in $E_1^c \times B_R \times S^{N-1}$.
	Thus, we have that there exists a $\mu_0 = \mu_0(\varepsilon, \alpha)$ such that $\rho_{\mu} \geq \alpha/2$ in $(K_\alpha \cap E_1^c) \times B_R \times S^{N-1}$ for every $\mu \in [\mu_0, 1)$. We then fix such a $\mu$.

	Next, by the hypothesis over $G^n$, we have that
	\begin{equation}
		\label{eq-rho-conv}
		\rho^n_{\mu}(t,x,v,\eta) \to \rho_{\mu}(t,x,v,\eta) \text{ a.e. } (t,x) \in (0,T)\times B_R,\, \forall (v,\eta) \in B_R \times S^{N-1}.
	\end{equation}

	We argue that the sequence $\rho^n_\mu$ is uniformly equicontinuous in $(v,\eta)$.
	Indeed, for some $\varepsilon' > 0$ we have, by uniform continuity of
	\begin{align*}
		\begin{cases}
			B_R \times S^{N-1} &\to L^\infty(\Rv)\\
			(v,\eta) ¨&\mapsto \mathbbm{1}_{V_{v,\mu,\eta}}
		\end{cases}
	\end{align*}
	that
	\[
		|(v_2, \eta_2) - (v_1, \eta_1)| \leq \delta
		\implies
		\| \mathbbm{1}_{V(v_2,\mu,\eta_2)} - \mathbbm{1}_{V(v_1,\mu,\eta_1)} \|_{L^\infty} \leq \varepsilon'
	\]
	for some $\delta > 0$, and therefore
	\begin{align*}
		|\rho^n_{\mu}(x,t,v_2,\eta_2) - \rho^n_{\mu}(x,t,v_1,\eta_1)|
		&\leq \int \left| \mathbbm{1}_{V(v_2,\mu,\eta_2)} - \mathbbm{1}_{V(v_1,\mu,\eta_1)} \right| g^n_* \,dv_*\\
		&\leq \varepsilon' |B_{(1-\mu)^{-1}}|.
	\end{align*}

	 This in turn implies that the convergence \eqref{eq-rho-conv} is uniform in $(v,\eta)$.
	 One more application of Egorov's theorem as before implies that there exists a measurable set $E_2$ with $|E_2| < \varepsilon/2$ and an $n_0 = n_0(\varepsilon, \alpha)$ such that
	\[
		\rho^n_{\mu} \geq \frac{\alpha}{4} \text{ in } (K_\alpha \cap E^c) \times B_R \times S^{N-1},\, \forall n \geq n_0
	\]
	where $E = E_1 \cup E_2$ and $|E| < \varepsilon$.
	The result then follows.
\end{proof}

\subsection{Almost everywhere compactness}

Intuitively, we can interpret the velocity averaging result, Lemma \ref{lemma-vel-avg}, as a "compactness in $t,x$", provided by the transport operator.
In this way, it is expected that if we can find some compactness result in the variable $v$, then we must have compactness for the sequence of functions as a whole.

In this section we show that this intuitive notion indeed holds.
That is, that velocity averaging together with compactness in $v$ actually imply compactness almost everywhere for the sequence of functions.
Even more, we were able to alleviate the need for a "complete compactness" in $v$ to achieve a more general, if more technical, result.

This extra generality, however, is necessary for our application since the diffusion matrix of the equation is not uniformly elliptic, as seen in the Section 3.2.

\begin{proposition}
	\label{prop-ae-compactness}
	Let $(\Phi^n)_n$ be a sequence of functions such that $\Phi^n \weakstarto \Phi$ in $L^\infty((0,\infty) \times \Rxv)$.
	Suppose that, for each $T, R > 0$,

	1) Quasi-bounded in $v$: For every $\varepsilon, \alpha > 0$, there exists a $C = C(\varepsilon, \alpha)$ and a measurable set $E$, with $|E| \leq \varepsilon$ such that
	\[
		\iiint_{(K_\alpha \cap E^c) \times B_R} |\nabla_v \Phi^n|^2 \,dtdxdv \leq C(\varepsilon, \alpha)
	\]
	where
	$
		K_{\alpha} = \left\{ (t,x) \in [0,T] \times B_R : \int \Phi \, dv > \alpha \right\},
	$

	2) Velocity averages: for every $\varphi \in C^\infty_c (\mathbb{R}^N_v)$, $(\int \Phi^n \varphi \,dv)_n$ is compact in $L^1([0,T] \times B_R)$.

	Thus, passing to a subsequence, we have $\Phi^n \to \Phi$ almost everywhere in $(0,\infty) \times \mathbb{R}^{2N}_{x,v}$.
\end{proposition}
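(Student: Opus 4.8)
The plan is to run the classical velocity-averaging machinery. Hypothesis (2) gives strong compactness of $\Phi^n$ in the $(t,x)$ variables once $\Phi^n$ is mollified in $v$ at a fixed scale, while hypothesis (1) bounds the error of that mollification — but only on the ``good'' region $K_\alpha$ where the limiting density is non-degenerate. Since the conclusion is local, I would fix $T,R>0$, set $\Omega:=[0,T]\times B_R\times B_R$, and prove $\Phi^n\to\Phi$ in $L^1(\Omega)$; letting $T,R\to\infty$ gives $L^1_{loc}$ convergence and then a subsequence converges a.e. Fix a standard mollifier $(\omega_\delta)_\delta$ on $\Rv$, write $\Phi^n_\delta:=\Phi^n\ast_v\omega_\delta\ (\ge 0)$ and $M:=\sup_n\|\Phi^n\|_{L^\infty}<\infty$.

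\emph{Step 1 (fixed scale).} For fixed $\delta$ and fixed $v$, $\Phi^n_\delta(\cdot,\cdot,v)=\int\Phi^n(\cdot,\cdot,w)\,\omega_\delta(v-w)\,dw$ is a velocity average against $\omega_\delta(v-\cdot)\in C^\infty_c(\Rv)$, hence by (2) relatively compact in $L^1([0,T]\times B_R)$; and $v\mapsto\Phi^n_\delta(\cdot,\cdot,v)$ is equicontinuous from $\overline{B_R}$ into $L^1([0,T]\times B_R)$, uniformly in $n$, since the difference at $v,v'$ is $\le M\,|[0,T]\times B_R|\,\|\omega_\delta(v-\cdot)-\omega_\delta(v'-\cdot)\|_{L^1}$. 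A vector-valued Arzelà--Ascoli argument (convergence on a countable dense set of velocities, propagated by equicontinuity) then makes $\{\Phi^n_\delta\}_n$ relatively compact in $C(\overline{B_R};L^1([0,T]\times B_R))$, hence in $L^1(\Omega)$. Extracting diagonally over $\delta=1/j$, $j\in\mathbb N$, I conclude that every subsequence of $(\Phi^n)$ has a further subsequence along which $\Phi^n_{1/j}$ converges in $L^1(\Omega)$ for every $j$.

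\emph{Step 2 (mollification error and conclusion).} Fix $\alpha,\varepsilon>0$ and let $C(\varepsilon,\alpha)$ and $E$ (with $|E|\le\varepsilon$, independent of $n$) be as in (1), applied with radius $R+1$ to absorb the $v$-translations of size $\le\delta\le1$. Split $[0,T]\times B_R=G_\alpha\cup E\cup(K_\alpha^c\setminus E)$ with $G_\alpha:=K_\alpha\cap E^c$. On $G_\alpha\times B_R$ the standard mollification bound and Jensen's inequality give $\|\Phi^n-\Phi^n_\delta\|_{L^2}\le\delta\,C(\varepsilon,\alpha)^{1/2}$, hence $\le\delta\,C(\varepsilon,\alpha)^{1/2}|\Omega|^{1/2}$ in $L^1(G_\alpha\times B_R)$, uniformly in $n$. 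On $E\times B_R$ I bound crudely by $2M\,|E\times B_R|\le2M\varepsilon|B_R|$. On $K_\alpha^c\times B_R$ there is no gradient control, so here I use $\Phi^n\ge0$: since $\int\Phi\,dv\le\alpha$ on $K_\alpha^c$, testing $\Phi^n\weakstarto\Phi$ against $\mathbbm1_{K_\alpha^c\times B_{R+1}}\in L^1$ (and using $\Phi^n_\delta\ge0$ with $\int_{B_R}\Phi^n_\delta\,dv\le\int_{B_{R+1}}\Phi^n\,dv$) yields $\|\Phi^n\|_{L^1(K_\alpha^c\times B_R)}+\|\Phi^n_\delta\|_{L^1(K_\alpha^c\times B_R)}\le C_{T,R}\,\alpha$ for $n$ large, uniformly in $\delta\le1$. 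Adding the three contributions, $\|\Phi^n-\Phi^n_\delta\|_{L^1(\Omega)}\le\delta\,C(\varepsilon,\alpha)^{1/2}|\Omega|^{1/2}+2M\varepsilon|B_R|+C_{T,R}\,\alpha$ for $n$ large. Given $\sigma>0$ I choose $\alpha$, then $\varepsilon$, small, then $j$ large so that $\delta=1/j$ makes this $<\sigma/3$; combining with Step 1 for the middle term in $\|\Phi^n-\Phi^m\|_{L^1(\Omega)}\le\|\Phi^n-\Phi^n_{1/j}\|+\|\Phi^n_{1/j}-\Phi^m_{1/j}\|+\|\Phi^m_{1/j}-\Phi^m\|$ gives $<\sigma$ for $n,m$ large along the subsequence of Step 1. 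So that subsequence is Cauchy in $L^1(\Omega)$, with limit $\Phi$ by the uniform $L^\infty$ bound and uniqueness of the weak-$\ast$ limit. Since every subsequence admits such a further subsequence, the whole sequence satisfies $\Phi^n\to\Phi$ in $L^1(\Omega)$; as $T,R$ were arbitrary, $\Phi^n\to\Phi$ in $L^1_{loc}((0,\infty)\times\Rxv)$, and a last subsequence converges a.e.

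\emph{Where the difficulty is.} The only genuine obstacle is the region $K_\alpha^c$, where neither hypothesis localizes $\Phi^n$ in velocity — (1) is vacuous there and (2) controls only complete averages — so the statement would in fact be false without an extra structural input (e.g.\ $\Phi^n(t,x,v)=\sin(nv_1)\mathbbm1_{K}(t,x)$ satisfies both hypotheses with $\Phi\equiv0$ yet does not converge a.e.). What rescues the argument is the sign $\Phi^n\ge0$ (valid in our application, $\Phi^n=g^n(1-g^n)$): it upgrades the smallness of the \emph{limiting} density on $K_\alpha^c$ to smallness of the $L^1$-mass of $\Phi^n$ and $\Phi^n_\delta$ there, via the weak-$\ast$ convergence. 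Everything else — the fixed-scale Arzelà--Ascoli, the mollification estimate on $G_\alpha$, and the various diagonal extractions — is routine.
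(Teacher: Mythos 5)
Your proof is correct and follows essentially the same strategy as the paper's: mollify in $v$, extract compactness of the mollified sequence from the velocity averages, control the mollification error on $K_\alpha\cap E^c$ with the gradient bound, and dispose of $E$ and of $K_\alpha^c$ by smallness of measure and smallness of the limiting density, respectively. The only real variation is in your Step 1, where you get compactness of $\Phi^n_\delta$ in $L^1(\Omega)$ via a vector-valued Arzel\`a--Ascoli argument; the paper instead first upgrades hypothesis (2) to all $\varphi\in L^1(\Rv)$ by density and a diagonal extraction and then passes to a.e.\ convergence of the mollifications -- both routes are fine. Your observation that the sign condition $\Phi^n\ge 0$ is genuinely needed, together with the counterexample $\sin(nv_1)\mathbbm{1}_K(t,x)$, is correct and worth recording: the proposition as stated omits this hypothesis, and the paper's own proof uses it silently, both when bounding $I_1$ by $\iint_{K_\alpha^c}\int_{B_{R_0}}(\Phi^n+\Phi^m)$ and when estimating $\iint_{K_\alpha^c}\int_{B_{R_0}}\Phi$ by $\alpha T|B_{R_1}|$; in the intended application $\Phi^n=g^n(1-g^n)\ge 0$, so no harm results.
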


\begin{proof}
	For each $T, R > 0$ and $\varphi \in C^\infty_c(\Rv)$ we have, passing to a subsequence and using uniqueness of the weak limit, that
	\begin{equation}
		\label{eq:vel-averaging-result-phi-n}
		\int \Phi^n \varphi \,dv \to \int \Phi \varphi \,dv \; \text{ in } L^1([0,T] \times B_R).
	\end{equation}

	So far, the subsequence we take depends on the function $\varphi$ chosen, but we can construct a subsequence that works for all $\varphi$.
	Indeed, using that $C^\infty_c(\Rv)$ is dense in $L^1(\Rv)$ and a diagonal argument we may suppose that $\varphi \in L^1(\Rv)$.
	Next, taking a Schauder basis of $L^1(\Rv)$ and applying another diagonal argument, we may assume that the convergence \eqref{eq:vel-averaging-result-phi-n} holds for every $\varphi \in L^1(\mathbb{R}^N_v)$.
	
	Notice however that at this stage the subsequence we take still depends on the $T$ and $R$ chosen.
	We could, of course, get rid of this limitation by a further diagonal extraction argument, which will give us a sequence converging in $L^1_{loc}((0,\infty) \times \mathbb{R}^N_x)$, but this is not necessary at the moment for the argument, and we will save this approach for the last part of the proof, where we will also deal with the $v$ variable.

	Let $\eta \in C^\infty_c (\mathbb{R}^N_v)$ be a positive function with $\int \eta \,dv$ = 1, supported in the unit ball of $v$ and consider the mollifier $\eta_{\delta}(v) = \frac{1}{\delta^N} \eta\left(\frac{v}{\delta}\right)$.
	We will note $\Phi^n_\delta := \Phi^n *_v \eta_{\delta}$ and $\Phi_\delta := \Phi *_v \eta_{\delta}$.
	Pick an $R_0 > 0$ and consider $R_1 = R_0 + 1$.
	The velocity averaging result \eqref{eq:vel-averaging-result-phi-n}, applied to a $T > 0$ and $R = R_1$ implies we have, for every $\delta > 0$,
	\[
		\Phi^n_\delta \to \Phi_\delta \text{ a.e. in } [0,T] \times B_{R_1} \times B_{R_1}
	\]
	and thus by dominated convergence, $(\Phi^n_\delta)_n$ converges in $L^1((0,T) \times B_{R_1} \times B_{R_1})$.

	Now, by the definition of $\Phi^n_\delta$, we have
	\begin{align*}
		\int_{B_{R_0}} |\Phi^n - \Phi^n_\delta| \, dv
		&= \int_{B_{R_0}} \left| 
				\Phi^n(v) - \int_{B_\delta} \frac{1}{\delta^N} \eta \left( \frac{v_*}{\delta} \right) \Phi^n(v-v_*) \, dv_* 
			\right| \, dv\\
		&\leq \int_{B_{R_0}} \int_{B_\delta} \frac{1}{\delta^N} \eta \left( \frac{v_*}{\delta} \right) 
			|\Phi^n(v-v_*) -  \Phi^n(v)| \, dv_* dv
	\end{align*}
	By the mean value theorem, it follows that this is bounded by
	\[
		\int_{B_{R_0}} \int_{B_\delta} \int_0^1
			\frac{1}{\delta^N} \eta \left( \frac{v_*}{\delta} \right) 
			|\nabla_v \Phi^n(v-\tau v_*)| |v_*| \, d \tau dv_* dv.
	\]
	
	If $\delta$ is sufficiently small, then $v - \tau v_* \in B_{R_1}$, for every $v \in B_{R_0}$ and $\tau \in [0,1]$.
	This way, the above integral is smaller than
	\begin{align*}
		\int_{B_{R_1}} \int_{B_\delta} \int_0^1
			\frac{1}{\delta^N} \eta \left( \frac{v_*}{\delta} \right) 
			|\nabla_v \Phi^n(v)| |v_*| \, d \tau dv_* dv
		&\leq \delta \int_{B_{R_1}} |\nabla_v \Phi^n(v)|
			\left( \int_{B_\delta} \frac{1}{\delta^N} \eta \left( \frac{v_*}{\delta} \right)\, dv_* \right) \, dv\\
		&= \delta \int_{B_{R_1}} |\nabla_v \Phi^n(v)|
	\end{align*}
	
	Further integrating in $(t,x) \in K_\alpha \cap E^c$ we have, by the Cauchy-Schwarz inequality,
	\begin{align}
		\label{delta-approx}
		\begin{split}
			\iiint_{(K_\alpha \cap E^c) \times B_{R_0}} |\Phi^n - \Phi^n_\delta| \, dtdxdv
			&\leq \delta \iiint_{(K_\alpha \cap E^c) \times B_{R_1}} |\nabla_v \Phi^n| \, dtdxdv\\
			&\leq \delta (C(\varepsilon, \alpha))^{1/2} (|K_\alpha \cap E^c| |B_{R_1}|)^{1/2}\\
			&\leq C_1(\varepsilon, \alpha) \delta,
		\end{split}
	\end{align}
	for $\delta > 0$ sufficiently small and $n \geq n_0(\alpha, \varepsilon)$.

	From there, we can show that $(\Phi^n)_n$ is a Cauchy sequence in $L^1((0,T) \times B_{R_0} \times B_{R_0})$, because
	\begin{align*}
		\iiint_{(0,T) \times B_{R_0} \times B_{R_0}} &|\Phi^n - \Phi^m| \, dtdxdv
		\leq \iiint_{(0,T) \times B_{R_0} \times B_{R_1}} |\Phi^n - \Phi^m| \, dtdxdv\\
		&= \iint_{K_\alpha^c} \int_{B_{R_0}} |\Phi^n - \Phi^m| \, dtdxdv
			+ \iint_{E} \int_{B_{R_0}} |\Phi^n - \Phi^m| \, dtdxdv\\
		&+ \iint_{E^c \cap K_\alpha} \int_{B_{R_0}} |\Phi^n - \Phi^n_\delta| + |\Phi^m - \Phi^m_\delta| \, dtdxdv\\
		&+ \iint_{E^c \cap K_\alpha} \int_{B_{R_0}} |\Phi^n_\delta - \Phi^m_\delta| \, dtdxdv\\
		&= I_1 + I_2 + I_3 + I_4.
	\end{align*}

	For $I_1$, Velocity averaging implies that $\int_{B_{R_0}} \Phi^n \to \int_{B_{R_0}} \Phi$, thus
	\begin{align*}
		\limsup_{n,m \to \infty}\; I_1
		&\leq \limsup_{n,m \to \infty} \iint_{K_\alpha^c} \int_{B_{R_0}} \Phi^n + \Phi^m \, dtdxdv\\
		&= 2\iint_{K_\alpha^c} \int_{B_{R_0}} \Phi \, dvdtdx
		\leq 2\alpha T |B_{R_1}|.
	\end{align*}

	For $I_2$, we use that $|\Phi^n| \leq M$ to show it's bounded by $2M|B_{R_0}||E| \leq 2M|B_{R_0}|\varepsilon$.
	We bound $I_3$ with \eqref{delta-approx} and lastly
	\[
		\limsup_{n,m \to \infty}\; I_4
		\leq \limsup_{n,m \to \infty} \|\Phi^n_\delta - \Phi^m_\delta\|_{L^1((0,T) \times B_{R_1} \times B_{R_1})}
		= 0,
	\]
	since $(\Phi^n_\delta)_n$ is a Cauchy sequence.

	Finally, we have shown that
	\[
		\limsup_{n,m \to \infty} \int_{(0,T) \times B_{R_0} \times B_{R_0}} |\Phi^n - \Phi^m| \, dtdxdv
		\leq 2\alpha T |B_{R_1}| + 2M|B_{R_0}|\varepsilon + 2C_1(\alpha, \varepsilon)\delta
	\]
	and taking $\delta \to 0$, then $\alpha, \varepsilon \to 0$, we find that $\Phi^n \to \Phi$ in $L^1((0,T) \times B_{R_0} \times B_{R_0})$.

	Since the $R_0>0$ and the $T > 0$ are arbitrary, we can do a diagonal extraction to arrive at a subsequence such that $\Phi^n \to \Phi$ in $L^1_{loc}((0,\infty) \times \Rxv)$, which passing to a further subsequence if necessary, implies convergence almost everywhere.
\end{proof}

Now we bring everything together for a proof of the main proposition of this section.
\begin{proof}[Proof of Proposition \ref{prop-compactness}]
	The integral bounds obeyed by the sequence $g^n$, together with the fact that $0 \leq g^n \leq 1$ for every $n$ allows us, using Dunford-Pettis, to extract a subsequence converging weakly in $L^1_{loc}((0,\infty); L^1(\Rxv))$.
	By Scheffé's lemma, to show the full convergence in $L^1$, we just need to show a.e. convergence.

	Lemma \ref{lemma-vel-avg} gives us directly that the velocity averages are compact. 
	On the other hand inequality \eqref{ineq-aij-deriv}, together with Lemma \ref{lemma-ellipticity}, gives us that for each $R, \varepsilon, \alpha > 0$ there exists $C, n_0 > 0$ and a measurable set $E$, with $|E| \leq \varepsilon$, such that
	\[
		\iiint_{(K_\alpha \cap E^c) \times B_R} |\nabla_v g^n|^2 \,dtdxdv \leq C,
	\]
	for every $n \geq n_0$.
	Proposition \ref{prop-ae-compactness} then applies and we can further extract an a.e. convergent sequence, thus proving the result.	
\end{proof}

\section{Cauchy theory for the approximated equation}

In this section, we will define an equation that approximates LFD and construct solutions to it. The main goal here is to construct an approximation scheme that satisfies the hypothesis from Proposition \ref{prop-compactness} and then pass to the limit to recover LFD, which we will do in Section 5.

We define this approximated equation by considering a smooth collision kernel $a$, smooth initial data $f_0$ and by adding a vanishing viscosity term $\varepsilon \Delta_v f$, which will guarantee some regularity for the solutions of this equation.

However, we still need our approximate collision kernel to respect the conservation laws, and for this we need it to satisfy
\begin{equation}
	\label{property:approx-a-has-z-eigenvector}
	a(z)\geq 0,\,
	a(-z) = a(z)
	\text{ and }
	a(z) z \cdot z = 0,
\end{equation}
for every $z \in \mathbb{R}^N$.

Our main objective, for this section, will be to show the following result:
\begin{proposition}
	\label{prop-existence-approx}
	Let $a \in \mathcal{S}(\mathbb{R}^N)$ be a collision kernel satisfying \eqref{property:approx-a-has-z-eigenvector} and $f_0 \in \mathcal{S}(\Rxv)$ an initial data such that, for some $C'_1, C'_2, \alpha_0 > 0$,
	\[
		C'_1 e^{-\alpha_0 (|x|^2 + |v|^2)} \leq f_0(x,v) \leq
		\frac{C'_2 e^{-\alpha_0 (|x|^2 + |v|^2)}}{1 + C'_2 e^{-\alpha_0 (|x|^2 + |v|^2)}}.
	\]
	Then, there exists a solution $f \in C((0,\infty); \mathcal{D}'(\Rxv)) \cap L^2((0,\infty) \times \Rx; H^1(\Rv))$ to the approximated problem
	\begin{equation}
		\label{regularized}
		\begin{cases}
			\displaystyle
			\pdv{f}{t} + v_i \pdv{f}{x_i} =
			\pdv{}{v_j} \left\{ \abar^f_{ij} \pdv{f}{v_i} - \bbar^f_j f (1-f) \right\} + \varepsilon \Delta_v f\\
			f|_{t=0} = f_0.
		\end{cases}
	\end{equation}
	in the sense of Definition \ref{def-weak-solution}, where $\abar^f_{ij} \equiv a *_v (f(1-f))$ and $\bbar^f_i \equiv \pdv{a_{ij}}{v_j} *_v f$.

	Moreover, for each $ T > 0$ there exists a $C > 0$ such that
	\[
		\iiint_{(0,T) \times \Rxv} e^{\alpha (|x|^2 + |v|^2)} |\nabla_v f|^2 \, dxdvdt \leq C
	\]
	and there exist $C_1, C_2, \alpha > 0$ such that
	\[
		C_1 e^{-\alpha (|x|^2 + |v|^2)} \leq f(t,x,v) \leq
		\frac{C_2 e^{-\alpha (|x|^2 + |v|^2)}}{1 + C_2 e^{-\alpha (|x|^2 + |v|^2)}}
	\]
	for a.e. $(t,x,v) \in (0,T) \times \Rxv$.
\end{proposition}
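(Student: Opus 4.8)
The plan is to construct the solution by a Galerkin-type approximation in the velocity-spatial domain combined with a regularization that keeps the solution trapped between the two prescribed exponential barriers. Since the equation \eqref{regularized} has a genuine (viscous) parabolic part $\varepsilon\Delta_v f$ together with the transport term $v\cdot\nabla_x f$, the natural functional setting is $L^2((0,T)\times\Rx; H^1(\Rv))$ weighted by $e^{\alpha(|x|^2+|v|^2)}$. First I would fix a smooth, strictly positive, bounded coefficient field: because $f_0 \in \mathcal{S}$ and $a\in\mathcal{S}$, as long as $0\le f\le 1$ the convolutions $\abar^f_{ij}=a*_v(f(1-f))$ and $\bbar^f_i=\partial_{v_j}a_{ij}*_v f$ are smooth, bounded, and have bounded derivatives, with bounds depending only on $\|a\|$ and the total mass; moreover property \eqref{property:approx-a-has-z-eigenvector} is inherited so that $\abar^f$ is symmetric nonnegative. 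I would then set up a fixed-point scheme: given $\tilde f$ with $0\le\tilde f\le1$, solve the \emph{linear} degenerate-parabolic equation $\partial_t f + v\cdot\nabla_x f = \partial_{v_j}(\abar^{\tilde f}_{ij}\partial_{v_i}f - \bbar^{\tilde f}_j \tilde f(1-\tilde f)) + \varepsilon\Delta_v f$. Linear hypoelliptic theory (Kolmogorov-type operators, e.g. via Galerkin in $v$ and $x$ or via the Lions–Lax–Milgram theorem on a space adapted to the transport field) gives a unique weak solution in the weighted $H^1_v$ space; then Schauder's fixed-point theorem in a suitable weakly compact convex subset of $L^2_{loc}$, using the compactness of the map $\tilde f\mapsto (\abar^{\tilde f},\bbar^{\tilde f})$ (convolution with a Schwartz kernel gains compactness), closes the argument.

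The crucial analytic step — and the main obstacle — is establishing the two-sided exponential bound, i.e. constructing sub- and super-solutions of the form $\underline f = C_1 e^{-\alpha(|x|^2+|v|^2)}$ and $\overline f = C_2 e^{-\alpha(|x|^2+|v|^2)}/(1+C_2 e^{-\alpha(|x|^2+|v|^2)})$, and proving a comparison principle. The point is that the nonlinearity $f(1-f)$ together with the quantum structure is compatible with these barriers: the upper barrier is exactly of the Fermi–Dirac form $M/(1+M)$ with $M$ Gaussian, and one checks by direct (if tedious) computation that for $\alpha$ large enough (and $C_1,C_2$ chosen from $C_1',C_2'$, shrinking/growing in time) one has $\partial_t\overline f + v\cdot\nabla_x\overline f - \partial_{v_j}(\abar^f_{ij}\partial_{v_i}\overline f - \bbar^f_j\overline f(1-\overline f)) - \varepsilon\Delta_v\overline f \ge 0$ pointwise, uniformly over all admissible coefficient fields arising from functions between the barriers — here the transport term produces a term linear in $x\cdot v$ which must be absorbed by allowing $\alpha=\alpha(t)$ to depend on time, or by working on a finite interval $[0,T]$ and choosing constants accordingly. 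The Gaussian decay makes the viscous term $\varepsilon\Delta_v$ and the $\abar^f$-diffusion term manageable since $|\abar^f|,|\bbar^f|$ decay polynomially while the barriers decay exponentially. The lower barrier $\underline f$ is handled symmetrically, using that $\abar^f_{ij}$ vanishes in the $z$-direction so the diffusion cannot instantaneously destroy positivity, plus a Harnack/maximum-principle argument for the hypoelliptic operator. Once the barriers are propagated by the linear solver, the fixed-point set $\{0\le \underline f\le f\le\overline f\le 1\}$ is invariant, so the fixed point $f$ satisfies the bounds automatically.

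With the pointwise bounds in hand, the weighted energy estimate follows by the standard route: multiply the equation by $e^{\alpha(|x|^2+|v|^2)} f$ (or rather test against it after the usual mollification in $t,x$ justified as in the proof of Lemma \ref{lemma-vel-avg}) and integrate. The transport term $\int e^{\alpha(|x|^2+|v|^2)} f\, v\cdot\nabla_x f$ integrates to $-\alpha\int e^{\alpha(|x|^2+|v|^2)} f^2\, v\cdot x$, a term controlled by the exponential bound on $f$ (it is absolutely integrable because $f$ itself has Gaussian decay), and similarly the $v$-derivative falling on the weight in the $\abar^f$ and $\varepsilon\Delta_v$ terms produces lower-order contributions absorbed by the Gaussian decay of $f$ and polynomial growth of the coefficients. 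The diffusion and viscosity terms give $\int e^{\alpha(|x|^2+|v|^2)}(\abar^f_{ij}\partial_{v_i}f\,\partial_{v_j}f + \varepsilon|\nabla_v f|^2)$; since $\varepsilon>0$ this dominates $\varepsilon\int e^{\alpha(|x|^2+|v|^2)}|\nabla_v f|^2$, and after a Grönwall argument in $t$ over $[0,T]$ one obtains the claimed bound $\iiint_{(0,T)\times\Rxv} e^{\alpha(|x|^2+|v|^2)}|\nabla_v f|^2 \le C$. Finally, the continuity $f\in C((0,\infty);\mathcal D'(\Rxv))$ and the distributional formulation of Definition \ref{def-weak-solution} are read off from the weak formulation produced by the solver, noting that all the coefficient fields $\abar^f_{ij}, \partial_{v_j}\abar^f_{ij}, \bbar^f_i$ lie in $L^\infty_{loc}$ as required. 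I expect the barrier construction to occupy most of the work; the rest is a careful but routine implementation of viscous-parabolic/hypoelliptic fixed-point machinery.
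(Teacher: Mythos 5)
Your overall architecture -- freeze the coefficients, solve a linear hypoelliptic equation, trap the solution between an explicit Gaussian subsolution and a Fermi--Dirac supersolution via a comparison principle, then derive the weighted energy estimate by testing against $e^{\alpha(|x|^2+|v|^2)}f$ -- is exactly the paper's strategy (Lemmas \ref{f-est} and \ref{derivative-est}), and you correctly identify the barrier construction as the heart of the matter. The genuine difference is how the nonlinearity is closed: you invoke Schauder, while the paper runs a Picard-type iteration $f^{k-1}\mapsto f^k$ and extracts a convergent subsequence using the velocity-averaging/a.e.-compactness machinery of Proposition \ref{prop-ae-compactness}, taking care of the index mismatch between $f^k$ and the coefficients built from $f^{k-1}$ by a further extraction. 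Either route can work, but note that the compactness you need for Schauder is \emph{not} supplied by ``convolution with a Schwartz kernel gains compactness'': that only smooths in $v$, whereas continuity/compactness of the solution map in $(t,x)$ requires precisely the averaging-lemma and hypoellipticity input that the paper assembles; so the Schauder step is not cheaper than the iteration, it hides the same work.

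There is one concrete gap in your linearization. You freeze the drift entirely, writing the source as $\partial_{v_j}\bigl(\bbar^{\tilde f}_j\,\tilde f(1-\tilde f)\bigr)$. Expanding, this contains $\bbar^{\tilde f}_j\,\partial_{v_j}\bigl(\tilde f(1-\tilde f)\bigr)$, i.e. a term involving $\nabla_v\tilde f$, which is \emph{not} controlled pointwise over the convex set $\{\underline f\le\tilde f\le\overline f\}$; consequently your claim that $\overline f$ is a supersolution ``uniformly over all admissible coefficient fields arising from functions between the barriers'' fails as stated, and the invariance of the fixed-point set is not established. The paper's linearization \eqref{regularized-linear} is chosen precisely to avoid this: it writes the drift as $-\bbar^g_j(1-2g)\partial_{v_j}f-\partial_{v_j}\bbar^g_j\,g(1-f)$, so that only the \emph{unknown} $f$ is differentiated, the coefficients depend on $g$ only through $L^\infty$ norms of $\bbar^g$ and $\nabla_v\bbar^g$ (which are uniform since $0\le g\le1$), and the zeroth-order term $-\partial_{v_j}\bbar^g_j\,g(1-f)$ has the affine structure in $f$ that makes the computation $\mathcal L\bigl(\tfrac{e^\varphi}{1+e^\varphi}\bigr)-\mathcal L(f)\le0$ close for $\varphi=k_0+k_1t-k_2\tfrac{|x|^2+|v|^2}{1+t}$. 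You should adopt this (or an equivalent) freezing; with your version the comparison argument, and hence the whole barrier-invariance step, does not go through.
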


Our approach will be to construct an iterative scheme in a linear equation.
Given $f_0, g \in C^\infty((0,T) \times \Rxv)$, consider the linear approximated problem
\begin{equation}
	\label{regularized-linear}
	\begin{cases}
		\displaystyle
		\pdv{f}{t} + v_i \pdv{f}{x_i} =
		\pdv{}{v_j} \left\{ \abar^g_{ij} \pdv{f}{v_i} \right\} - \bbar^g_j (1-2g) \pdv{f}{v_j} - \pdv{\bbar^g_j}{v_j} g (1-f) + \varepsilon \Delta_v f\\
	
		f|_{t=0} = f_0.
	\end{cases}
\end{equation}

We construct an approximation scheme to equation \eqref{regularized} by induction, taking $f^{n+1}$ as the solution of \eqref{regularized-linear} when we let $g=f^n$.
Then, equation \eqref{regularized} will come as a limit of this scheme when $n \to \infty$. This is the core idea to the existence part of Proposition \ref{prop-existence-approx}, and we will see that in more detail when we tackle its proof.
For now, let us focus on showing some properties of equation \eqref{regularized-linear} that are independent of $g$ (which in the approximating scheme will imply that they are uniform in $n$).

It has long been observed that although the diffusion acts only in the $v$ variable, the solutions to this type of problem are regular in both variables.
This behavior, called hypoellipticity, was first identified by Kolmogorov \cite{kolmogorov_1934} for the equation $\partial_t f + v \cdot \nabla_x f - \Delta_v f = 0$, while studying an stochastic process, which he was then able to solve using a fundamental solution.

The pursuit for fundamental solutions then expanded to a wider class of equations, such as
\begin{equation}
	\label{example-hypoelliptic}
	\mathcal{L}(f) = - \pdv{f}{t} 
		- v_i \pdv{f}{x_i} 
		+ a_{ij} \mpdv{f}{v_i}{v_j}
		+ a_i \pdv{f}{v_i}
		+ a f = 0,
\end{equation}
with $(a_{ij})_{ij}$ uniformly elliptic,
which were extensively studied in 1951 by Weber \cite{weber_1951}, then in 1964 by Il'in \cite{ilin_1964}, who constructed fundamental solutions for these equations that imply the existence of classical solutions.
In Hormander's work, we find a more precise definition of hypoellipticity, in which the operator $\mathcal{L}$ is called hypoelliptic if $f$ is regular wherever $\mathcal{L}(f)$ is regular.
It was even shown in \cite{hormander-1967} that equations like \eqref{example-hypoelliptic} with $C^\infty$ coefficients are in fact hypoelliptic.
Thus, once we construct a solution (even a weak one) to the equation $\mathcal{L}(f) = 0$ in $(0,T) \times \Rxv$, it will automatically be $C^\infty((0,T) \times \Rxv)$.

Thus, if $g \in C^\infty((0,\infty) \times \Rxv)$ and $f_0 \in C^\infty(\Rxv)$, we know there exists a solution $f \in C^\infty((0,\infty) \times \Rxv)$ to \eqref{regularized-linear}, where $f(t) \to f_0$ when $t \to 0^+$.
Once we have our smooth solution, by a comparison principle, we can show that if $0 \leq g \leq 1$, then $0 \leq f \leq 1$, and thus the linear approximated equation preserves the Pauli exclusion principle.
In particular, the matrix $(\abar^g_{ij})_{ij}$ is non-negative.

With a little more effort, however, we can use a comparison principle to prove a Gaussian decay for the $f$ function, independent of $g$, as is shown in the next lemma

\begin{lemma}
	\label{f-est}
	Let $0 \leq g \leq 1$ be a smooth function and suppose the initial data $f_0$ satisfies
	\[
		C'_1 e^{-\alpha_0 (|x|^2 + |v|^2)} \leq f_0(x,v) \leq
		\frac{C'_2 e^{-\alpha_0 (|x|^2 + |v|^2)}}{1 + C'_2 e^{-\alpha_0 (|x|^2 + |v|^2)}}
	\]

	Then, for every $t>0$ there exists constants $\alpha, C, C_1, C_2 > 0$, independent of $g$, such that the solution $f$ to \eqref{regularized-linear} satisfies
	\begin{equation}
		\label{f-est-ineq}
		C_1 e^{-\alpha (|x|^2 + |v|^2)} \leq f(t,x,v) \leq
		\frac{C_2 e^{-\alpha (|x|^2 + |v|^2)}}{1 + C_2 e^{-\alpha (|x|^2 + |v|^2)}},
	\end{equation}
	for $t \in [0,T]$.
\end{lemma}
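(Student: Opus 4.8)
The plan is to obtain \eqref{f-est-ineq} from a comparison principle, by exhibiting explicit super- and sub-solutions of Fermi--Dirac--Gaussian shape. First I would record the structure of \eqref{regularized-linear}: expanding the $v$-divergence it reads
\[
\pdv{f}{t}+v\cdot\nabla_x f-\big(\abar^g_{ij}+\varepsilon\delta_{ij}\big)\mpdv{f}{v_i}{v_j}-\Big(\pdv{\abar^g_{ij}}{v_j}-\bbar^g_i(1-2g)\Big)\pdv{f}{v_i}-\Big(\pdv{\bbar^g_j}{v_j}g\Big)f=-\pdv{\bbar^g_j}{v_j}g,
\]
a linear equation which is uniformly parabolic in $v$ (since $\abar^g+\varepsilon I\ge\varepsilon I$) and of transport type in $(t,x)$. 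By Young's inequality and $0\le g\le1$ (hence $0\le g(1-g)\le\frac14$), all the coefficients $\abar^g_{ij}$, $\pdv{\abar^g_{ij}}{v_j}$, $\bbar^g_i$, $\pdv{\bbar^g_i}{v_i}$ and the source $\pdv{\bbar^g_j}{v_j}g$ are bounded in $L^\infty$ by constants depending only on $a$, in particular independent of $g$. The same comparison principle already invoked to prove $0\le f\le1$ applies to this equation for bounded sub/supersolutions, so it suffices to produce barriers with the two Gaussian profiles.

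For the upper bound I would try the Fermi--Dirac--Gaussian
\[
\overline f(t,x,v)=\frac{G}{1+G},\qquad G=C_2(t)\,e^{-\alpha(t)(|x|^2+|v|^2)},\quad\alpha(0)=\alpha_0,\ C_2(0)=C'_2,
\]
so that $\overline f|_{t=0}\ge f_0$. Using the identities $\nabla_v\overline f=-2\alpha v\,\overline f(1-\overline f)$, $\nabla_x\overline f=-2\alpha x\,\overline f(1-\overline f)$ and $\partial_t\overline f=(C_2'/C_2-\alpha'(|x|^2+|v|^2))\,\overline f(1-\overline f)$, the supersolution inequality reduces, after dividing by the positive factor $1-\overline f$, to a pointwise scalar inequality in which the indefinite terms --- the transport cross term $-2\alpha(v\cdot x)$, the first-order contributions of $\pdv{\abar^g_{ij}}{v_j}$ and $\bbar^g_i(1-2g)$, and part of the diffusion --- are, via Cauchy--Schwarz ($2|v|\,|x|\le|x|^2+|v|^2$) and $|v|\le1+|v|^2$, all bounded by $C(1+|x|^2+|v|^2)$ with $C$ depending only on $N,\varepsilon,a$ and the sup of $\alpha$, while the terms $-\alpha'(|x|^2+|v|^2)$ (choosing $\alpha$ \emph{decreasing}) and $-4\alpha^2(\abar^g_{ij}v_iv_j+\varepsilon|v|^2)(1-2\overline f)$ are favorable and absorb them, at the cost of a smaller --- but still positive --- exponent $\alpha(T)$ on the finite interval $[0,T]$. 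The lower bound is handled symmetrically with the pure Gaussian subsolution $\underline f=C_1 e^{-\alpha(t)(|x|^2+|v|^2)}$, $C_1=C'_1$, $\alpha(0)=\alpha_0$, now with $\alpha$ \emph{increasing}. Comparison on $[0,T]\times\Rxv$ then yields \eqref{f-est-ineq}.

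The delicate point --- and the only one requiring real care --- is the bounded zeroth-order source $-\pdv{\bbar^g_j}{v_j}g$ (equivalently the term $-\pdv{\bbar^g_j}{v_j}g(1-f)$): it carries no decay in $(x,v)$ and so a priori competes with the favorable quadratic weight of the barrier, which degenerates far out where $G$ is small. I would control the first-order terms by Young's inequality against the quadratic weight as above, and the zeroth-order piece by retaining the $(1-\overline f)$, resp. $(1-f)$, factor intact --- so the source is switched off precisely where the barrier saturates --- together with the decay of $g$ available in the iterative scheme $g=f^n$ of Proposition \ref{prop-existence-approx}, which makes $\pdv{\bbar^g_j}{v_j}g$ itself decay. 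Turning these choices into explicit inequalities for $C_1,C_2,\alpha(t)$ is the main computational work; everything else is the routine differentiation of Gaussians sketched in the previous paragraph.
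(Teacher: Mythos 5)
Your overall strategy is the paper's: a parabolic comparison principle against an explicit Fermi--Dirac--Gaussian super-solution $G/(1+G)$ whose exponent relaxes in time (the paper takes $\varphi=k_0+k_1t-k_2\frac{|x|^2+|v|^2}{1+t}$, so the prefactor grows like $e^{k_1t}$ while the exponent widens), a pure Gaussian sub-solution whose exponent tightens ($\varphi=k_0-k_1t-k_2(1+t)(|x|^2+|v|^2)$), and Young's inequality to absorb the transport cross term $x\cdot v$ and the first-order coefficients into $-\varepsilon|\nabla_v\varphi|^2$ and $-\partial_t\varphi$. Up to that point the proposal and the paper coincide, and your remark that the coefficients are controlled in $L^\infty$ uniformly in $g$ by Young's convolution inequality is exactly the paper's closing observation.

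Where you diverge is on the zeroth-order term $-\pdv{\bbar^g_j}{v_j}\,g\,(1-f)$, and your proposed treatment of it is a genuine gap. The lemma demands constants \emph{independent of} $g$ under the sole hypothesis that $g$ is smooth with $0\le g\le 1$; no decay of $g$ is assumed. Appealing to ``the decay of $g$ available in the iterative scheme'' therefore proves a different statement, and it would undermine the use the paper makes of the lemma: if the constants depended on the decay rate of $g=f^{k-1}$, the bounds would no longer be uniform in $k$ and the induction of Proposition \ref{prop-existence-approx} would not close as written. The paper needs no such input: it bounds the coefficient by $\left|\pdv{\bbar^g_j}{v_j}g\right|\le \left\|\mpdv{a_{ij}}{v_i}{v_j}\right\|_{L^1}$ and folds this constant, like $\|\nabla_v\abar\|_\infty$ and $\|\bbar\|_\infty$, into the choice of $k_1$, i.e.\ into the exponential growth $C_2(t)=C_2'e^{k_1t}$ of the prefactor. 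Your other device --- retaining the factor $(1-\overline f)$ because ``the source is switched off precisely where the barrier saturates'' --- does not do the work you attribute to it: saturation ($\overline f$ near $1$) happens near the origin, whereas the competition you correctly worry about is in the tail, where $1-\overline f\to 1$ while the favorable terms, which carry the prefactor $\overline f(1-\overline f)$, degenerate. So the one step you yourself flag as delicate is resolved neither by (a) nor by an admissible version of (b); it has to be closed by the paper's absorption of the bounded coefficient into $k_1$, which your sketch leaves open.
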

\begin{proof}
	We start by showing the inequality on the right side.
	Define the differential operator
	\[
		\mathcal{L}(f) = \pdv{}{v_j} \left\{ \abar_{ij} \pdv{f}{v_i} \right\} - \bbar_j (1-2g) \pdv{f}{v_j} - \pdv{\bbar_j}{v_j} g (1-f) + \varepsilon \Delta_v f - \pdv{f}{t} - v_i \pdv{f}{x_i},
	\]
	where, for brevity, we have dropped the reference to $g$ in $\abar$ and $\bbar$.

	Let $\varphi \in C^\infty((0,\infty) \times \Rxv)$.
	We have that
	\begin{multline*}
		\mathcal{L}\left( \frac{\exp \varphi}{1+\exp \varphi} \right) =
		\exp(-\varphi) \bigg[
			\varepsilon \frac{\partial^2 \varphi}{\partial v_i^2} - \varepsilon \left( \pdv{\varphi}{v_i} \right)^2
			+ \pdv{\abar_{ij}}{v_i} \pdv{\varphi}{v_j}
			+ \abar_{ij} \frac{\partial^2 \varphi}{\partial v_i \partial v_j}
			-\abar_{ij} \pdv{\varphi}{v_i} \pdv{\varphi}{v_j}\\
			- \bbar_j (1-2g) \pdv{\varphi}{v_j}
			- v_i \pdv{\varphi}{x_i}
			- \pdv{\varphi}{t}
		\bigg]
		+ \pdv{\bbar_j}{v_j} g \frac{\exp \varphi}{1+\exp \varphi}
	\end{multline*}
	and thus, since $\abar$ is a non-negative matrix,
	\begin{multline*}
		\mathcal{L}\left( \frac{\exp \varphi}{1+\exp \varphi} \right) - \mathcal{L}(f)
		\leq \exp(-\varphi) \bigg[
		\varepsilon \frac{\partial^2 \varphi}{\partial v_i^2} - \varepsilon \left( \pdv{\varphi}{v_i} \right)^2
		+ \pdv{\abar_{ij}}{v_i} \pdv{\varphi}{v_j}
		+ \abar_{ij} \frac{\partial^2 \varphi}{\partial v_i \partial v_j}\\
		- \bbar_j (1-2g) \pdv{\varphi}{v_j}
		- \pdv{\bbar_j}{v_j} g
		- v_i \pdv{\varphi}{x_i}
		- \pdv{\varphi}{t}
		\bigg].
	\end{multline*}

	Then choosing $\varphi = k_0 + k_1 t - k_2 \frac{|x|^2 + |v|^2}{1+t}$, it follows, using Young's inequality,
	\begin{align*}
		\mathcal{L}\left( \frac{\exp \varphi}{1+\exp \varphi} \right) - \mathcal{L}(f)
		&\leq \exp(-\varphi)
			\bigg[
			-\varepsilon N \frac{2k_2}{1+t}
			- \varepsilon \left(\frac{2k_2}{1+t}\right)^2 |v|^2
			+ (\| \nabla_v \abar \|_{\infty} + \| \bbar \|_{\infty}) \frac{2k_2}{1+t} |v|\\
		&\phantom{\leq \exp(-\varphi)
			\bigg[
			-\varepsilon N \frac{2k_2}{1+t}}
			+ \| \nabla_v \bbar \|_{\infty}
			+ \frac{2k_2}{1+t} |x||v|
			- k_1
			- k_2 \frac{|x|^2 + |v|^2}{(1+t)^2}
			\bigg]\\
		&\leq \exp(-\varphi) \bigg[
			\frac{1}{2\varepsilon} (\| \nabla_v \abar \|_{\infty} + \| \bbar \|_{\infty})^2
			+ \| \nabla_v \bbar \|_{\infty}
			+ \frac{1}{2\varepsilon} |x|^2 - k_1
			- k_2 \frac{|x|^2}{(1+t)^2}
		\bigg].
	\end{align*}

	Therefore, choosing $k_1 \geq \frac{1}{2\varepsilon} (\| \nabla_v \abar \|_{\infty} + \| \bbar \|_{\infty})^2
	+ \| \nabla_v \bbar \|_{\infty}$ and $k_2 \geq \frac{(1+T)^2}{2\varepsilon}$, the above quantity is negative, for $t \in [0,T]$.

	Finally, choosing $k_0 = \log C'_2$ and further taking $k_2 \geq \alpha_0$, we have that $f_0 \leq \frac{\exp \varphi_0}{1+\exp \varphi_0}$ and the comparison principle implies, for $t \in [0,T]$,
	\[
		f
		\leq \frac{C'_2 e^{k_1 t} e^{-\frac{k_2}{1+t} (|x|^2 + |v|^2)}}{1 + C_2 e^{k_1 t} e^{-\frac{k_2}{1+t} (|x|^2 + |v|^2)}}
		\leq \frac{C'_2 e^{k_1 T} e^{-\frac{1+T}{2\varepsilon } (|x|^2 + |v|^2)}}{1 + C'_2 e^{k_1 T} e^{-\frac{1+T}{2\varepsilon} (|x|^2 + |v|^2)}}.
	\]

	Letting $C_2 = C'_2 e^{k_1 T}$ and $\alpha = \frac{1}{\varepsilon (1+T)}$ the first inequality follows.
	We now turn to prove the left inequality.
	Notice that
	\begin{multline*}
		\mathcal{L}(\exp\varphi) = \exp\varphi
		\bigg[
			\varepsilon \frac{\partial^2 \varphi}{\partial v_i^2}
			+ \varepsilon \left( \pdv{\varphi}{v_i} \right)^2
			+ \pdv{\abar_{ij}}{v_i} \pdv{\varphi}{v_j}
			+ \abar_{ij} \frac{\partial^2 \varphi}{\partial v_i \partial v_j}
			+ \abar_{ij} \pdv{\varphi}{v_i} \pdv{\varphi}{v_j}\\
			- \bbar_j (1-2g) \pdv{\varphi}{v_j}
			+ \pdv{\bbar_j}{v_j}g
			- v_i \pdv{\varphi}{x_i}
			- \pdv{\varphi}{t}
		\bigg].
	\end{multline*}

	Choosing $\varphi = k_0 - k_1 t - k_2 (1+t)(|x|^2 + |v|^2)$, it follows, using Young's inequality,
	\begin{alignat*}{2}
		\mathcal{L}(\exp\varphi)
		&\geq \exp\varphi
		\bigg[
			&&- \varepsilon 2 k_2 (1+t)N
			+ \varepsilon 4 k_2^2 (1+t)^2 |v|^2
			- \pdv{\abar_{ij}}{v_i} k_2 (1+t) 2 v_j
			- \abar_{ij} 2k_2(1+t)\delta_{ij}\\
			&\quad
			&&+ \bbar_j (1-2g) k_2 (1+t) 2 v_j
			+ \pdv{\bbar_j}{v_j} g
			+ k_2 (1+t) 2 v_i x_i
			+ k_1
			+ k_2 (|x|^2 + |v|^2)
		\bigg]\\
		&\geq \exp\varphi
		\bigg[
			&&-\varepsilon 2 k_2 (1+t) N
			-\frac{1}{2\varepsilon} (\|\nabla_v \abar \|_{\infty} + \| \bbar \|_{\infty})^2
			- \| \abar \|_{\infty} 2k_2 (1+t)\\
			&\quad
			&&- \| \bbar \|_{\infty}
			- \frac{1}{2\varepsilon} |x|^2
			+ k_1
			+ k_2 (|x|^2 + |v|^2)
		\bigg].
	\end{alignat*}

	Thus, choosing $k_2 \geq \frac{1}{2\varepsilon}$ and $k_1 $ sufficiently large, we have $\mathcal{L}(\exp\varphi) \geq 0 \geq \mathcal{L}(f)$.
	Further, we can suppose $k_2 \geq \alpha_0$ and choose $k_0 = \log C'_2$, which ensures $f_0 \geq \exp\varphi_0$ and thus the comparison principle implies that, for $t\in [0,T]$,
	\[
		f \geq C'_1 e^{-k_1 t} e^{-k_2 (1+t)(|x|^2 + |v|^2)}
		\geq C'_1 e^{-k_1 T} e^{-\frac{1+T}{2\varepsilon} (|x|^2 + |v|^2)}
	\]
	and the left inequality follows by letting $C_1 = C'_1 e^{-k_1 T}$.

	Note that the constants $k_0$, $k_1$ and $k_2$ we choose for each of the inequalities depend only on $\varepsilon$ and the $L^\infty$ norms of $\abar$, $\bbar$ and their derivatives with respect to $v$, and thus these constants are uniform with respect to $g$.
\end{proof}

We will also need decay estimates on the derivatives of $f$, uniform in $g$.
However, these estimates are very difficult to obtain by means of a comparison principle.
Here, we use energy methods to deduce a uniform estimate with weights, which will be enough for what we need.

\begin{lemma}
	\label{derivative-est}

	Suppose $f$ is a solution of \eqref{regularized-linear} such that $f$ and $g$ both satisfy inequality \eqref{f-est-ineq}, with the same constants.
	Then, for every $T > 0$ there exists a $C>0$, independent of $g$, such that
	\begin{equation}
		\label{df-estimate}
		\iiint_{(0,T) \times \Rxv} e^{\alpha (|x|^2 + |v|^2)} |\nabla_v f|^2 \, dxdvdt \leq C,
	\end{equation}
	where this $\alpha$ is the same one as in Lemma \ref{f-est}.
\end{lemma}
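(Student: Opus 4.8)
The plan is to use a standard energy estimate: multiply the equation \eqref{regularized-linear} by $e^{\alpha(|x|^2+|v|^2)} f$ (or rather integrate against this weighted version of $f$), integrate over $\Rxv$, and integrate by parts in $v$ to extract the good term $\int \abar_{ij} \partial_{v_i} f\, \partial_{v_j} f$ plus $\varepsilon \int |\nabla_v f|^2$ with the weight. Since $\abar^g$ is a non-negative matrix (this was already observed before Lemma \ref{f-est}), the $\abar$-contribution has a favorable sign, and the genuinely coercive term will be $\varepsilon \int e^{\alpha(|x|^2+|v|^2)} |\nabla_v f|^2$. The transport term $v\cdot\nabla_x f$ integrates to something controlled by $\int |v|\, |x|\, e^{\alpha(\cdots)} f^2$ after moving the weight around; combined with the Gaussian pointwise bound \eqref{f-est-ineq} from Lemma \ref{f-est}, this is integrable. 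First I would write out
\[
	\frac{1}{2}\frac{d}{dt} \int e^{\alpha(|x|^2+|v|^2)} f^2\, dxdv
	+ \varepsilon \int e^{\alpha(|x|^2+|v|^2)} |\nabla_v f|^2\, dxdv
	+ \int e^{\alpha(|x|^2+|v|^2)} \abar_{ij}\, \partial_{v_i} f\, \partial_{v_j} f\, dxdv
	= (\text{error terms}),
\]
where the error terms come from (i) the weight derivatives hitting $\nabla_v$ in the integration by parts, producing terms like $\alpha \int e^{\alpha(\cdots)} v_i\, \abar_{ij}\, f\, \partial_{v_j} f$ and $\varepsilon\alpha \int e^{\alpha(\cdots)} v_i\, f\, \partial_{v_i} f$; (ii) the transport term $-\int e^{\alpha(\cdots)} (v\cdot\nabla_x f) f = \alpha \int e^{\alpha(\cdots)} (v\cdot x) f^2$; (iii) the drift terms $\bbar_j(1-2g)\partial_{v_j} f$ and $\partial_{v_j}\bbar_j\, g(1-f)$.

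Next I would absorb the dangerous error terms using Young's inequality: each product $e^{\alpha(\cdots)} w\, f\, \partial_{v} f$ (with $w$ a polynomial weight in $x,v$, and coefficients bounded by $\|\abar\|_\infty, \|\nabla_v\abar\|_\infty, \|\bbar\|_\infty,\|\nabla_v\bbar\|_\infty$, which are finite since $a\in\mathcal S$ and $f,g$ have Gaussian decay) is bounded by $\frac{\varepsilon}{4} e^{\alpha(\cdots)} |\nabla_v f|^2 + C_\varepsilon\, e^{\alpha(\cdots)} w^2 f^2$. The first piece is absorbed into the coercive $\varepsilon$-term on the left; the second piece, together with all the zeroth-order error terms, is of the form $C \int e^{\alpha(\cdots)} (1+|x|^2+|v|^2)^k\, (f^2 + f + |f|)\, dxdv$. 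Now the crucial input is the pointwise bound $f(t,x,v) \leq C_2 e^{-\alpha(|x|^2+|v|^2)}$ from Lemma \ref{f-est} (valid with this same $\alpha$): it gives $e^{\alpha(|x|^2+|v|^2)} f \leq C_2$, so $e^{\alpha(\cdots)} f^2 \leq C_2 f$ and $e^{\alpha(\cdots)} f \leq C_2$. Hence each error term is bounded by $C \int (1+|x|^2+|v|^2)^k ( f + e^{-\alpha(\cdots)} )\, dxdv$, and since $f\in L^1$ with polynomial moments (again from the Gaussian bound) and $e^{-\alpha(\cdots)}(1+|x|^2+|v|^2)^k$ is integrable, the whole right-hand side is bounded by a constant uniform in $g$ and in $t\in[0,T]$.

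Integrating the resulting differential inequality $\frac{d}{dt}\int e^{\alpha(\cdots)} f^2 + \varepsilon \int e^{\alpha(\cdots)}|\nabla_v f|^2 \leq C$ over $[0,T]$, and using that $\int e^{\alpha(\cdots)} f_0^2\, dxdv \leq C_2 \int f_0\, dxdv < \infty$ for the initial term, yields
\[
	\varepsilon \iiint_{(0,T)\times\Rxv} e^{\alpha(|x|^2+|v|^2)} |\nabla_v f|^2\, dxdvdt \leq C,
\]
which is \eqref{df-estimate}. I would note that all constants track back only to $\varepsilon$, $T$, the Schwartz seminorms of $a$, and the Gaussian constants $C_1,C_2,\alpha$ from Lemma \ref{f-est}, all of which are independent of $g$, so the estimate is uniform in $g$ as claimed.

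The main obstacle I anticipate is making the formal integration by parts rigorous: one must justify that $f$ and its derivatives decay fast enough that there are no boundary terms at infinity and that all the weighted integrals appearing are finite at each step. Since the linear equation has $C^\infty$ solutions (by hypoellipticity, as discussed before Lemma \ref{f-est}) and Lemma \ref{f-est} gives the Gaussian \emph{upper} bound on $f$ itself, the cleanest route is probably to first run the computation with a spatial cutoff $\zeta_R(x,v)$ times the weight, obtain an estimate with constants independent of $R$ (the cutoff-derivative terms being controlled the same way using the Gaussian bound on $f$), and then let $R\to\infty$ by monotone convergence. A secondary subtlety is that the weight $e^{\alpha(|x|^2+|v|^2)}$ grows, so one cannot a priori assume $e^{\alpha(\cdots)}|\nabla_v f|^2 \in L^1$ before the estimate is proven — hence the cutoff is genuinely needed rather than merely cosmetic; but the pointwise Gaussian control on $f$ (not on $\nabla f$) is exactly what makes every term on the right-hand side close up.
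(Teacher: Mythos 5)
Your proposal is correct and follows essentially the same route as the paper: a weighted energy estimate obtained by testing the equation against $e^{\alpha(|x|^2+|v|^2)} f$ times a compactly supported cutoff, using the non-negativity of $\abar^g$ so that coercivity comes from the $\varepsilon\Delta_v$ term, absorbing the cross terms by Young's inequality, controlling the remaining terms via the Gaussian bounds of Lemma \ref{f-est} (which is exactly what makes the constants independent of $g$), and finally removing the truncation. The paper implements the cutoff as $\psi^n(x,v)=\psi(\cdot/n)$ from the start and shows the remainder $R^n$ vanishes, which matches the regularization step you correctly identified as necessary.
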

\begin{proof}
	Since we don't have decay estimates for the derivatives of $f$, in order to justify the following integrals, let $\psi^n = \psi^n(x,v)$ be such that $\psi^n = \psi(\cdot/n)$, where $\psi \in C^\infty_c(\Rxv)$, $\psi(0) = 1$ and $\iint \psi \,dxdv = 1$.

	For brevity, we will define $\Abar_{ij} = \abar_{ij} + \varepsilon \delta_{ij}$.
	Multiplying \eqref{regularized-linear} by $e^{\alpha (|x|^2 + |v|^2)} f \psi$ and integrating in $x,v$, we have,

	\newcommand{\expxv}{e^{\alpha (|x|^2 + |v|^2)}}
	\begin{multline}
		\label{est-df-integrated-equation}
		\frac{1}{2} \frac{d}{dt} \intxv \expxv f^2 \psi^n
		+ \intxv v_i \frac{\partial f}{\partial x_i} \expxv f \psi^n
		= \intxv \pdv{}{v_j} \left\{ \Abar_{ij} \pdv{f}{v_i} \right\} \expxv f \psi^n\\
		- \intxv \bbar_j (1-2g) \pdv{f}{v_j} \expxv f \psi^n
		- \intxv \pdv{\bbar_j}{v_j} g (1-f) \expxv f \psi^n
	\end{multline}
	
	For the first integral on the right-hand side of \eqref{est-df-integrated-equation} we have, integrating by parts in $v$,
	\begin{multline*}
		\intxv \pdv{}{v_j} \left\{ \Abar_{ij} \pdv{f}{v_i} \right\} \expxv f \psi^n
		= - 2\alpha \intxv \Abar_{ij} \pdv{f}{v_i} f v_j \expxv \psi^n\\
			- \intxv \Abar_{ij} \pdv{f}{v_i} \pdv{f}{v_j} \expxv \psi^n
			- \intxv \Abar_{ij} \pdv{f}{v_i} f \expxv \pdv{\psi^n}{v_i}.
	\end{multline*}
	Using the ellipticity of the matrix $\Abar$, the first two integrals are bounded by
	\[
		2\alpha \intxv |\Abar_{ij}| \left| \pdv{f}{v_i} \right| |f| |v_j| \expxv \psi^n
		- \varepsilon \intxv |\nabla_v f|^2 \expxv \psi^n
	\]
	and by Young's inequality, this is bounded by
	\[
		\frac{2\alpha^2 N^2}{\varepsilon} \| \Abar \|_\infty^2 \intxv  |f|^2 |v|^2 \expxv \psi^n
		- \frac{\varepsilon}{2} \intxv |\nabla_v f|^2 \expxv \psi^n.
	\]
	
	Again using Young's inequality, we have
	\[
		\intxv |\bbar_j| |1-2g| |f| \left| \pdv{f}{v_j} \right| \expxv \psi^n
		\leq \frac{\varepsilon}{4} \intxv |\nabla_v f|^2 \expxv \psi^n
		+ \frac{N}{\varepsilon} \| \bbar \|_\infty^2 \intxv |f|^2 \expxv \psi^n.
	\]
	and since $0 \leq f \leq 1$,
	\[
		\intxv \left| \pdv{\bbar_j}{v_j} \right| |g| |1-f| |f| \expxv \psi^n
		\leq N \| \nabla_v \bbar \|_\infty \intxv |g| |f| \expxv \psi^n
	\]
	
	Finally, integration by parts, we get
	\[
		\intxv v_i \frac{\partial f}{\partial x_i} \expxv f \psi^n
		= - \alpha \intxv  f^2 x_i v_i \expxv \psi^n
		-  \intxv \frac{f^2}{2} v_i \expxv \pdv{\psi^n}{v_i}
	\]
	and thus that there exists a $C>0$ such that
	\begin{multline*}
		\frac{d}{dt} \intxv \frac{f^2}{2}\expxv \psi^n
		\leq
		- \frac{\varepsilon}{4} \intxv |\nabla_v f|^2 \expxv \psi^n\\
		+ C \intxv \left( (|v|^2 + |x||v| + 1) f^2 + |g| |f| \right) \expxv \psi^n
		+ R^n
	\end{multline*}
	where the remainder $R^n$ is given by
	\[
		R^n = - \intxv \Abar_{ij} \pdv{f}{v_i} f \expxv \pdv{\psi^n}{v_i} + \intxv \frac{f^2}{2} v_i \expxv \pdv{\psi^n}{v_i}.
	\]
	
	Integrating over $t \in (0,T)$, we have that
	\begin{multline}
		\label{est-df-truncated-integral}
		\inttxv |\nabla_v f|^2 \expxv \psi^n
		\leq \frac{2}{\varepsilon} \inttxv (f(0)^2 - f(T)^2) \expxv \psi^n\\
			+ \frac{4C}{\varepsilon} \inttxv \left( (|v|^2 + |x||v| + 1) f^2 + |g| |f| \right) \expxv \psi^n
			+ \frac{4}{\varepsilon} \int_0^T R^n,
	\end{multline}
	where we have used $\inttxv$ to denote $\int_0^T \intxv dt$.
	
	Now we release the truncation.
	The bound \eqref{f-est-ineq}, satisfied by $f$ and $g$, imply that the first two integrals are uniformly bounded in $n$.
	Let us show that the remainder $\int R^n$ vanishes in the limit $n \to \infty$.
	Indeed, integrating by parts and using the definition of $\psi^n$,
	\begin{multline*}
		\inttxv \Abar_{ij} \pdv{}{v_i}\left( \frac{f^2}{2} \right) \expxv \pdv{\psi^n}{v_i}
		= - \frac{1}{n} \int_{(0,T) \times B_n \times B_n} \pdv{\Abar_{ij}}{v_i} \frac{f^2}{2} \expxv \pdv{\psi}{v_j}\left(\frac{\cdot}{n}\right)\\
		- \frac{\alpha}{n} \int_{(0,T) \times B_n \times B_n} \Abar_{ij} f^2 v_i \expxv \pdv{\psi}{v_j} \left(\frac{\cdot}{n}\right)\\
		- \frac{1}{n^2} \int_{(0,T) \times B_n \times B_n} \Abar_{ij} \frac{f^2}{2} \expxv \mpdv{\psi}{v_i}{v_j} \left(\frac{\cdot}{n}\right).
	\end{multline*}
	These integrals are uniformly bounded in $n$ from the estimates \eqref{f-est-ineq}, and thus this quantity converges to zero.
	Similarly,
	\[
		\inttxv \frac{f^2}{2} v_i \expxv \pdv{\psi^n}{v_i} = \frac{1}{n} \int_{(0,T) \times B_n \times B_n} \frac{f^2}{2} v_i \expxv \pdv{\psi}{v_i}\left(\frac{\cdot}{n}\right)
	\]
	also converges to zero.
	Thus, we can pass to the limit in \eqref{est-df-truncated-integral}, leading $|\nabla_v f|^2 \expxv \in L^1((0,T) \times \Rxv)$.
	Notice this integral is independent of the function $g$, as long as it obeys the bound \eqref{f-est-ineq}. 
\end{proof}

Now, we can use the bounds shown above to build an iterative process, which we will show converges to a solution of the approximate problem \eqref{regularized}.

\begin{proof}[Proof of Proposition \ref{prop-existence-approx}]
	Throughout this proof, we will notate, for every $p \in [1, \infty]$, $L^p$ the space $L^p_{loc}((0,\infty); L^p(\Rxv))$, $L^p_{loc}$ the space $L^p_{loc}((0,\infty) \times \Rxv)$ and $L^2_{tx} H^1_v$ the space $L^2_{loc}((0,\infty); L^2(\Rx; H^1(\Rv)))$.
	Notice all of these spaces are local in time.

	\proofpart{1}{The iterative process}

	The existence for the nonlinear regularized equation is obtained through an iterative process.
	Let $f^0$ be the solution of the Kolmogorov equation $\partial_t f + v \cdot \nabla_x f - \varepsilon \Delta_v f = 0$, with initial data $f_0$.
	For $k \geq 1$, let $f^{k}$ be the solution of problem \eqref{regularized-linear} with $g = f^{k-1}$, that is, solution to the equation
	with initial data $f_0$, where we note $\abar^{k-1}$ and $\bbar^{k-1}$ instead of $\abar^{f^{k-1}}$ and $\bbar^{f^{k-1}}$, respectively.
	That is, we consider the iterative process
	\begin{equation}
		\label{iteration-eq-k}
		\begin{cases}
			\displaystyle
			\pdv{f^k}{t} + v_i \pdv{f^k}{x_i} =
			\pdv{}{v_j} \left\{ \abar^{k-1}_{ij} \pdv{f^k}{v_i} \right\}
			- \bbar^{k-1}_j (1-2f^{k-1}) \pdv{f^k}{v_j}
			- \pdv{\bbar^{k-1}_j}{v_j} f^{k-1} (1-f^k)
			+ \varepsilon \Delta_v f^k\\

			f^k|_{t=0} = f_0.
		\end{cases}
	\end{equation}
	As mentioned at the beginning of the section we know that for every $k \in \mathbb{N}$, this problem has a solution $f^k \in C^\infty((0,\infty) \times \Rxv)$.
	We can extend this function to $C([0,\infty) \times \Rxv)$ by continuity and this will imply $f^k(0) = f_0$.

	Note that the bounds \eqref{f-est-ineq} and \eqref{df-estimate} found earlier, being independent of $g$, in this process become uniform in $k$.
	That is, for each $T > 0$ we have
	\begin{equation}
		\label{ineq-iteration-f-est}
		C_1 e^{-\alpha (|x|^2 + |v|^2)} 
		\leq f^k(t,x,v) 
		\leq \frac{C_2 e^{-\alpha (|x|^2 + |v|^2)}}{1 + C_2 e^{-\alpha (|x|^2 + |v|^2)}}
	\end{equation}
	and 
	\begin{equation}
		\label{ineq-iteration-df-v-est}
		\iiint_{(0,T) \times \Rxv} e^{\alpha (|x|^2 + |v|^2)} |\nabla_v f^k|^2 \, dxdvdt \leq C,
	\end{equation}
	for constants $C, C_1, C_2, \alpha > 0$ depending on $T$ but not on $k$.

	We will then use the same techniques as in Section 2 to achieve compactness for this sequence of functions, i.e. we will use Proposition \ref{prop-ae-compactness} to extract an a.e. convergent sequence.
	In this case, the extra regularity plays in our favor and this result is easily applied.
	Indeed, inequality \eqref{ineq-iteration-df-v-est} already shows us that the derivatives in $v$ are bounded, and so we only have to show the compactness of the velocity averages.

	Once again we will use Theorem \ref{bouchut-vel-avg}, rewriting equation \eqref{iteration-eq-k} as
	\[
		\pdv{f^k}{t} + v_i \pdv{f^k}{x_i} = \pdv{H_i}{v_i} - H.
	\]
	Using that $0 \leq f^k \leq 1$, we have that
	\begin{gather*}
		|H_i| = \left|(\abar^{k-1}_{ij} + \varepsilon \delta_{ij}) \pdv{f^k}{v_i} \right|
				\leq (1+ \|a_{ij}\|_{L^1}) \left| \pdv{f^k}{v_i} \right|\\
		|H| =  \left| \bbar^{k-1}_j (1-2f^{k-1}) \pdv{f^k}{v_j} + \pdv{\bbar^{k-1}_j}{v_j} f^{k-1} (1-f^k) \right|
			\leq \left\| \pdv{a_{ij}}{v_i} \right\|_{L^1} \left| \pdv{f^k}{v_j} \right|
				+ \left\| \mpdv{a_{ij}}{v_i}{v_j} \right\|_{L^1}
	\end{gather*}
	and thus these functions are uniformly bounded in $L^1_{loc}$, which implies the velocity averages are compact.

	Thus, Proposition \ref{prop-ae-compactness} applies and there exists a subsequence, noted $f^{k_l}$, such that $f^{k_l} \xrightarrow{l \to \infty} f$ almost everywhere in $(0,\infty) \times \Rxv$.
	Moreover, the bound from inequality \eqref{ineq-iteration-f-est} implies this convergence holds in $L^1$ by dominated convergence.
	This, however, is not sufficient to pass the equation to the limit $l \to \infty$, as the coefficients of the equation in $f^{k_l}$ depend on $f^{k_l - 1}$, which we do not know to be compact.

	The solution to this non-synchrony problem is then to consider the equation for $f^{k_l + 1}$.
	The same argument as before gives us that this sequence of functions is compact, and therefore passing once more to a subsequence, with indices in $m$, we have that $f^{k_m + 1}$ is a convergent sequence.
	Further, since $f^{k_m}$ is a subsequence of $f^{k_l}$, it is also convergent.
	
	Since this sequence is uniformly bounded, it follows by interpolation that $f^{k_m} \to f$ in $L^p$, for every $p < \infty$.
	Also, by the uniform estimate \eqref{ineq-iteration-df-v-est}, we may suppose, using Banach-Alaoglu, that this sequence converges weakly in $L^2_{tx} H^1_v$.

	Relabeling this last sequence simply as $f^k$ we have, to summarize,
	\begin{align*}
		f^k &\to f \; \text{ in } L^p_{loc}((0,\infty); L^p(\Rxv)), \text{ for every } p < \infty\\
		f^k &\weakto f \; \text{ in } L^2_{loc}((0,\infty); L^2(\Rx; H^1(\Rv))),
	\end{align*}
	and we may as well suppose (possibly passing to a subsequence) that
	\[
		f^k(t) \to f(t) \; \text{ in } L^1(\Rxv), \text{ for a.e. } t > 0.
	\]
	
	\proofpart{2}{Passing the equation to the limit}

	We integrate this equation by parts against a test function $\varphi \in \mathcal{D}([0,\infty) \times \Rxv)$ to obtain that, for every $t > 0$,
	\begin{multline}
		\label{iteration-eq-k-distributions}
		\intxv f^k(t) \varphi(t)
		- \intxv f_0 \varphi(0)
		- \int_0^t \intxv f^k \pdv{\varphi}{t}
		- \int_0^t \intxv f^k v_i \pdv{\varphi}{x_i} =\\
		  \int_0^t \intxv ( \abar^{k-1}_{ij} + \varepsilon \delta_{ij} ) f^k \mpdv{\varphi}{v_i}{v_j}
		+ \int_0^t \intxv \left( \pdv{\abar^{k-1}_{ij}}{v_i} + \bbar_j^{k-1} (1-2f^{k-1}) \right) f^k \pdv{\varphi}{v_j}\\
		+ \int_0^t \intxv \left[ \left( 
							\pdv{\bbar^{k-1}_j}{v_j} (1-2f^{k-1})
							- 2 \bbar^{k-1}_j \pdv{f^{k-1}}{v_j}
						\right) f^k
		- \pdv{\bbar^{k-1}_j}{v_j} f^{k-1} (1-f^k) \right]\varphi.
	\end{multline}
	Now, we want to pass equation \eqref{iteration-eq-k-distributions} to the limit.
	We have, from the convergence of $(f^k)_k$ that the left hand side of this equation converges for almost every $t > 0$ to
	\[
		\intxv f(t) \varphi(t)
		- \intxv f_0 \varphi(0)
		- \int_0^t \intxv f \pdv{\varphi}{t}
		- \int_0^t \intxv f v_i \pdv{\varphi}{x_i}.
	\]

	For the right hand side, define $\abar_{ij} \equiv a_{ij} *_v (f(1-f))$ and $\bbar_i \equiv \pdv{a_{ij}}{v_j} *_v f$.
	From Young's inequality for convolutions, we deduce from the convergence of $(f^k)_k$ that $\abar^k_{ij} \to \abar_{ij}$, $\bbar^k_j \to \bbar_j, \pdv{\abar^k_{ij}}{v_j} \to \pdv{\abar_{ij}}{v_j}$ and $\pdv{\bbar^k_j}{v_j} \to \pdv{\bbar_j}{v_j}$ in $L^p$, for every $p < \infty$.
	This in turn implies that
	\begin{align*}
		( \abar^{k-1}_{ij} + \varepsilon \delta_{ij} ) f^k &\to ( \abar_{ij} + \varepsilon \delta_{ij} ) f\\
		\left( \pdv{\abar^{k-1}_{ij}}{v_i} + \bbar_j^{k-1} (1-2f^{k-1}) \right) f^k &\to \left( \pdv{\abar_{ij}}{v_i} + \bbar_j (1-2f) \right) f\\
		\pdv{\bbar^{k-1}_j}{v_j} (1-2f^{k-1}) f^k &\to \pdv{\bbar_j}{v_j} (1-2f) f\\
		\pdv{\bbar^{k-1}_j}{v_j} f^{k-1} (1-f^k) &\to \pdv{\bbar_j}{v_j} f (1-f)
	\end{align*}
	in $L^1$.
	For the last convergence, we note that $\bbar^{k-1}_j f^k \to \bbar_j f$ and $\pdv{f^k}{v_j} \weakto \pdv{f}{v_j}$ in $L^2$, thus by a weak-strong result, we have that
	\[
		\bbar^{k-1}_j f^k \pdv{f^k}{v_j} \weakto \bbar_j f \pdv{f}{v_j} \text{ in } L^1.
	\]
	
	Thus, taking the limit $k \to \infty$ of \eqref{iteration-eq-k-distributions}, leads
	\begin{multline}
		\label{eq-approx-almost-distrib}
		\intxv f(t) \varphi(t) 
		- \intxv f_0 \varphi(0)
		- \int_0^t \intxv f \pdv{\varphi}{t}
		- \int_0^t  \intxv f v_i \pdv{\varphi}{x_i}=\\
		\int_0^t \intxv ( \abar_{ij} + \varepsilon \delta_{ij} ) f \mpdv{\varphi}{v_i}{v_j}
		+ \int_0^t \intxv \left( \pdv{\abar_{ij}}{v_i} + \bbar_j (1-2f) \right) f \pdv{\varphi}{v_j}\\
		+ \int_0^t \intxv \left[ \left( 
							\pdv{\bbar_j}{v_j} (1-2f)
							- 2 \bbar_j \pdv{f}{v_j}
						\right) f
		- \pdv{\bbar_j}{v_j} f (1-f) \right]\varphi
	\end{multline}
	for almost every $t > 0$.
	The integrals in $(0,t)$ being obviously continuous functions of $t$, imply that for every $\varphi \in \mathcal{D}(\Rxv)$,
	\[\begin{cases}
		(0,\infty) &\to \mathbb{R}\\
		t & \mapsto \displaystyle\intxv f(t) \varphi(t)
	\end{cases}\]
	is a continuous function.
	The equality \eqref{eq-approx-almost-distrib} is then valid for every $t > 0$.
	
	Let $\psi \in \mathcal{D}(\Rxv)$ and choose $\varphi$ such that $\varphi(t,x,v) = \psi(x,v)$ for every $(t,x,v) \in [0,1] \times \Rxv$ in \eqref{eq-approx-almost-distrib}.
	The above continuity then implies that $f \in C((0,\infty); \mathcal{D}'(\Rxv))$.
	Also, if we take the equation \eqref{eq-approx-almost-distrib} to the limit $t \to 0^+$, we conclude
	\[
		\intxv f(t) \psi \xrightarrow{t \to 0^+} \intxv f_0 \psi,
	\]
	that is, we have convergence to the initial data and therefore $f$ is a solution in the sense of Definition \ref{def-weak-solution}.
	Finally, the estimates \eqref{ineq-iteration-f-est} and \eqref{ineq-iteration-df-v-est}, being uniform in $k$, are therefore still valid in the limit.
\end{proof}

We will now show that the conservation laws are approximately obeyed by the solutions of the approximate equation, a result that will be useful for deducing the conservation laws obeyed by the LFD solution we will construct.
\begin{proposition}
	\label{prop-approx-a-priori-estimates}
	The approximated solution $f \in C((0,\infty); \mathcal{D}'(\Rxv)) \cap L^2((0,\infty) \times \Rx; H^1(\Rv))$ to \eqref{regularized} constructed in Proposition \ref{prop-existence-approx} satisfies, for almost every $t > 0$,

	1) Conservation of mass and linear momentum:
	\[
		\iint f(t,x,v) \, dxdv
		= \iint f_0(x,v) \,dxdv,
	\]\[
		\iint f(t,x,v) v_i \, dxdv
		= \iint f_0(x,v) v_i \,dxdv
		\, \forall i \in \{ 1, \cdots, N \}.
	\]

	2) Kinetic energy:
	\[
		\iint |v|^2 f(t,x,v) \,dxdv
		= \iint |v|^2 f_0(x,v) \,dxdv
		+ 2\varepsilon t \| f_0 \|_{L^1}.
	\]

	3) Moment of inertia:
	\[
		\iint f(t,x,v) |x - tv|^2 \, dxdv
		= \iint f_0(x,v) |x|^2 \, dxdv
		+ \frac{2\varepsilon t^3}{3} \| f_0 \|_{L^1}.
	\]

	4) Entropy inequality:
	\[
		\iint s(t,x,v) \, dxdv
		+ \int_0^t \iint d(t,x,v)
		\leq \iint s(0,x,v) \, dxdv,
	\]
	where
	\[
	s(t,x,v) = f \log f + (1-f) \log(1-f)
	\]
	and
	\[
		d(t,x,v) =
		\int a(v-v_*) f (1-f) f_* (1-f_*)
		\left| \frac{\nabla_v f}{f(1-f)} -
			\frac{\nabla_{v_*} f_*}{f_* (1-f_*)} \right|^{\otimes 2} dv_*,
	\]
	with the usual notations $f = f(t,x,v)$ and $f_* = f(t,x,v_*)$.

\end{proposition}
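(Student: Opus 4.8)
The plan is to obtain the five assertions by testing the weak formulation \eqref{eq-approx-almost-distrib} of the regularized equation against appropriate weights, localized by a cut-off so that every integral converges, and then sending the cut-off to infinity. Throughout I would use the three structural facts produced by Proposition~\ref{prop-existence-approx}: the Pauli bound $0\le f\le1$, the two-sided Gaussian bound on $f$, and the weighted gradient bound $\iiint e^{\alpha(|x|^2+|v|^2)}|\nabla_v f|^2<\infty$ on $(0,T)\times\Rxv$. The Gaussian upper bound is what permits inserting polynomially growing weights; the lower bound keeps $f$ and $1-f$ away from $0$ on every bounded set (with $1-f\ge(1+C_2)^{-1}$ globally), which is crucial for the entropy; and the weighted gradient bound is precisely the quantity needed to make the entropy dissipation globally integrable.

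For the conservation and decay laws I would insert into \eqref{eq-approx-almost-distrib} the test functions $\varphi_n(s,x,v)=\phi(s,x,v)\,\psi^n(x,v)$, with $\phi$ successively equal to $1$, $v_i$, $|v|^2$ and $|x-sv|^2$ and $\psi^n(x,v)=\psi(x/n,v/n)$ a smooth cut-off equal to $1$ near the origin, evaluate at a generic $t>0$, and let $n\to\infty$. Every term containing a derivative of $\psi^n$ carries a factor $n^{-1}$ or $n^{-2}$ together with a localization to $\{\,|(x,v)|\sim n\,\}$, where the Gaussian bound on $f$ --- and, for the terms involving $\abar^f$, the weighted bound on $\nabla_v f$ --- forces it to zero, just as in the proof of Lemma~\ref{derivative-est}, while the non-cut-off terms converge by dominated convergence. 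In the limit the transport contribution vanishes because $\partial_s\phi+v\cdot\nabla_x\phi=0$ for each of these $\phi$. The collision contribution also vanishes: reassembling the three collision terms of \eqref{eq-approx-almost-distrib} and undoing one integration by parts (legitimate since $\nabla_v f\in L^2_{loc}$), it equals $-\iint\bigl(\abar^f_{ij}\partial_{v_i}f-\bbar^f_j f(1-f)\bigr)\partial_{v_j}\phi$, which by the symmetrization $v\leftrightarrow v_*$ and $a(-z)=a(z)$ becomes
\[
-\tfrac12\iint a_{ij}(v-v_*)\bigl[f_*(1-f_*)\partial_{v_i}f-f(1-f)\partial_{v_{*i}}f_*\bigr]\bigl(\partial_{v_j}\phi-\partial_{v_{*j}}\phi_*\bigr)\,dv\,dv_*;
\]
here the weight difference $\partial_{v_j}\phi-\partial_{v_{*j}}\phi_*$ is $0$ for $\phi=1,v_i$ and proportional to $z_j=(v-v_*)_j$ for $\phi=|v|^2,|x-sv|^2$, hence annihilated by $a_{ij}(z)z_j=0$ from \eqref{property:approx-a-has-z-eigenvector}. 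The only surviving term is thus the viscous one, $\varepsilon\iint f\,\Delta_v\phi$: it is $0$ for $\phi=1,v_i$, and since $\Delta_v|v|^2$ and $\Delta_v|x-sv|^2$ are constant in $(x,v)$ it yields, via conservation of mass, the viscous corrections stated in (2)--(3) after integrating in $s\in(0,t)$. This proves (1)--(3).

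Assertion (4) is the delicate point, and I expect it to be the main obstacle. The formal computation is to test the equation against $s'(f)=\log\frac{f}{1-f}$: the transport term integrates to $0$; the viscous term produces $-\varepsilon\iint s''(f)|\nabla_v f|^2\le0$; and the same symmetrization turns the collision term into
\[
-\tfrac12\iiint a_{ij}(v-v_*)\,f(1-f)f_*(1-f_*)\,\Xi_i\Xi_j\,dv_*\,dv\,dx\le0,\qquad \Xi=\frac{\nabla_v f}{f(1-f)}-\frac{\nabla_{v_*}f_*}{f_*(1-f_*)},
\]
which (up to the normalization adopted for $d$ in \eqref{dissipation-def}) is exactly $-\int d$. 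Discarding the nonpositive viscous term and integrating in time then gives (4). I would also check here, using the $v$-regularity of $f$ and $\nabla_v\arcsin\sqrt f=\tfrac12(f(1-f))^{-1/2}\nabla_v f$, that this symmetrized integral coincides with the $\arcsin\sqrt f$ reformulation of Remark~\ref{rem:entopy-dissipation-definition}.

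Making the entropy argument rigorous is where the real work lies. Two difficulties arise. First, $s'(f)$ is not an admissible test function: it depends on $f$, and testing the transport term against it would require $\nabla_x f$, which is not controlled. I would circumvent this via the renormalization (chain rule) for the transport operator $\partial_t+v\cdot\nabla_x$ --- its field is smooth and divergence free, so that $\partial_t\beta(f)+v\cdot\nabla_x\beta(f)=\beta'(f)\,(\text{r.h.s.})$ holds in $\mathcal D'$ for bounded Lipschitz $\beta$ --- together with the fact that on every cylinder $f$ and $1-f$ stay away from $0$, using $\beta=s'_\eta$ a bounded Lipschitz regularization of $s'$ that agrees with $s'$ where $f\ge\eta$. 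Second, $s''(f)\sim1/f$ blows up at infinity; here the two-sided Gaussian bound saves the day, giving $|s'(f)|\lesssim1+|x|^2+|v|^2$ and $s''(f)\lesssim e^{\alpha(|x|^2+|v|^2)}$, so that $s''(f)|\nabla_v f|^2\lesssim e^{\alpha(|x|^2+|v|^2)}|\nabla_v f|^2\in L^1$ --- precisely \eqref{df-estimate} --- which controls both dissipations uniformly in $\eta$. One then passes to the limit in the cut-off (remainders vanishing as before), and lets $\eta\to0$ using dominated convergence for $\iint s_\eta(f)$ (note $|s(f)|\lesssim(1+|x|^2+|v|^2)e^{-\alpha(|x|^2+|v|^2)}$) and Fatou's lemma for the two nonnegative dissipation integrals, which yields the inequality in (4).
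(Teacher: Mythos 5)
For parts (1)--(3) your argument is essentially the paper's: the paper likewise tests against a weight multiplied by a cut-off $\varphi^n$, kills the cut-off remainders with the Gaussian bounds \eqref{f-est-ineq} and the weighted gradient estimate \eqref{df-estimate}, expands the convolutions, symmetrizes in $v\leftrightarrow v_*$ and uses $a(z)z=0$ so that only the viscous term survives. The only structural difference is that the paper performs the truncation at the level of the smooth iterates $f^k$ of \eqref{iteration-eq-k} and then lets $k\to\infty$, whereas you work directly with the limit $f$; both are legitimate since $f$ inherits the two-sided Gaussian bound and \eqref{df-estimate}.

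Part (4) is where you genuinely diverge from the paper, and where your proposal has a gap. You propose to renormalize the equation satisfied by the weak solution $f$ itself, writing $\partial_t\beta(f)+v\cdot\nabla_x\beta(f)=\beta'(f)\cdot(\text{right-hand side})$ for bounded Lipschitz $\beta$ and invoking the divergence-free transport commutator lemma. But the right-hand side is a second-order operator in $v$, not a locally integrable function, so the transport (DiPerna--Lions) lemma does not apply as stated and ``$\beta'(f)$ times a second-order distribution'' is undefined. What is actually needed is a weak chain rule for the full hypoelliptic operator, producing the term $-\beta''(f)\,(\abar_{ij}+\varepsilon\delta_{ij})\,\partial_{v_i}f\,\partial_{v_j}f$ with only $f\in L^2_{loc}((0,\infty)\times\Rx;H^1(\Rv))$ available; this is precisely the delicate step that in the classical Landau setting leads to renormalized solutions and defect measures, and it must be proved (for instance by the mollification-in-$(t,x)$ argument of Lemma \ref{lemma-vel-avg}), not asserted. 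A second, smaller issue is the initial term: your inequality would come with $\iint s_\eta(f(\tau))$ at some small time $\tau$, while $f$ attains $f_0$ only in $\mathcal{D}'(\Rxv)$, so recovering $\iint s(0,x,v)$ requires an additional continuity argument. The paper sidesteps both difficulties by carrying out the entropy computation classically on the smooth iterates $f^k$ (multiplication by $\log(f^k/(1-f^k))$ is unproblematic there thanks to the uniform two-sided bounds), obtaining \eqref{entropy-inequality-fk} with the exact initial datum, and then passing $k\to\infty$: the quadratic dissipation is written as the $L^2$ norm of $(\sqrt{\smash{\abar^{k-1}}})_{ij}\,\partial_{v_i}f^k/\sqrt{f^k(1-f^k)}$ and handled by weak lower semicontinuity, the remaining terms by dominated and weak--strong convergence. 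If you wish to keep your route you must supply the renormalization lemma for the Fokker--Planck part; otherwise the cleaner path is to prove the inequality at the level of $f^k$ and pass to the limit.
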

\begin{proof}
	As in the proof of Proposition \ref{prop-existence-approx} we will notate, for brevity, $L^p$ and $L^p_{loc}$ for $L^p_{loc}((0,\infty); L^p(\Rxv))$ and $L^p_{loc}((0,\infty) \times \Rxv)$, respectively.
	
	Let $(\varphi^n)_n$ be a sequence of smooth truncations, with $\varphi^n(x,v) = \varphi((x,v)/n)$, $\varphi(0) = 1$, $0\leq \varphi \leq 1$, this way $\supp \varphi^n \subset B_n$ and $\varphi^n \to 1$ a.e.
	Let $\psi = \psi(t,x,v)$ be a smooth function such that $e^{-\alpha (|x|^2 + |v|^2)} (|\psi| + |\nabla_{t,x,v} \psi| + |\nabla^2_{t,x,v} \psi|) \in L^2$ and consider $(f^k)_k$ be the sequence of solutions to \eqref{iteration-eq-k} constructed in the proof of Proposition \ref{prop-existence-approx}.

	Multiplying equation \eqref{iteration-eq-k} by $\varphi^n \psi$ we have, after integrating by parts,
	\begin{multline}
		\label{approx-conservation-laws-nk}
		\intxv f^k(t) \varphi^n \psi(t)
		- \intxv f_0 \varphi^n \psi_0
		- \inttxv f^k
			\left(	\pdv{(\varphi^n \psi)}{t} + v_i \pdv{(\varphi^n \psi)}{x_i} \right)
		=
		- \inttxv \Abar_{ij}^{k-1} \pdv{f^k}{v_i} \pdv{(\varphi^n \psi)}{v_j}\\
		- \inttxv \bbar_j^{k-1} (1-2 f^{k-1}) \pdv{f^k}{v_j} \varphi^n \psi
		- \inttxv \pdv{\bbar_j^{k-1}}{v_j} f^{k-1} (1-f^k) \varphi^n \psi
	\end{multline}
	where we note $\inttxv$ the integral $\int_0^t \intxv$, depending on $t$, and $\Abar_{ij}^{k-1} = \abar_{ij}^{k-1} + \varepsilon \delta_{ij}$.
	
	Let us pass each of the integrals to the limit $n \to \infty$.
	The first two converge to 
	\[
		\intxv f^k(t) \psi(t) - \intxv f_0 \psi_0
	\]
	by dominated convergence, given the uniform bounds \eqref{f-est-ineq}.
	Next, we have
	\begin{multline*}
		\inttxv f^k
		\left(	\pdv{(\varphi^n \psi)}{t} + v_i \pdv{(\varphi^n \psi)}{x_i} \right)
		= \frac{1}{n} \int_{(0,t) \times B_n \times B_n} f^k
		\left(	\pdv{\varphi}{t} + v_i \pdv{\varphi}{x_i} \right) \left( \frac{\cdot}{n} \right) \psi\\
		+ \inttxv f^k
		\varphi^n \left( \pdv{\psi}{t} + v_i \pdv{\psi}{x_i} \right).
	\end{multline*}
	Given the uniform bounds \eqref{f-est-ineq}, the first integral is bounded independently of $n$, and thus the first term of the right-hand side vanishes in the limit.
	The second term converges by dominated convergence and the right-hand side tends to
	\[
		\inttxv f^k \left( \pdv{\psi}{t} + v_i \pdv{\psi}{x_i} \right).
	\]
	
	Developing the next integral in \eqref{approx-conservation-laws-nk},
	\[
		\inttxv \Abar_{ij}^{k-1} \pdv{f^k}{v_i} \pdv{(\varphi^n \psi)}{v_j}
		= \frac{1}{n} \inttxv \Abar_{ij}^{k-1} \pdv{f^k}{v_i} \pdv{\varphi}{v_j}\left(\frac{\cdot}{n}\right) \psi
			+ \inttxv \Abar_{ij}^{k-1} \pdv{f^k}{v_i} \varphi^n \pdv{\psi}{v_j}.
	\]
	the first integral is uniformly bounded, and thus the first term converges to zero.
	Using dominated convergence on the second term, we have that the above expression converges to 
	\[
		\inttxv \Abar_{ij}^{k-1} \pdv{f^k}{v_i} \pdv{\psi}{v_j}.
	\]
	
	Once again by dominated convergence, the last two integrals in \eqref{approx-conservation-laws-nk} converge to
	\[
		- \inttxv \bbar_j^{k-1} (1-2 f^{k-1}) \pdv{f^k}{v_j} \psi
		- \inttxv \pdv{\bbar_j^{k-1}}{v_j} f^{k-1} (1-f^k) \psi,
	\]
	thus, in the limit $n \to \infty$ equation \eqref{approx-conservation-laws-nk} becomes
	\begin{multline}
		\label{approx-conservation-laws-k}
		\intxv f^k(t) \psi(t)
		- \intxv f_0 \psi_0
		- \inttxv f^k \left( \pdv{\psi}{t} + v_i \pdv{\psi}{x_i} \right)
		= - \inttxv \Abar_{ij}^{k-1} \pdv{f^k}{v_i} \pdv{\psi}{v_j}\\
		- \inttxv \bbar_j^{k-1} (1-2 f^{k-1}) \pdv{f^k}{v_j} \psi
		- \inttxv \pdv{\bbar_j^{k-1}}{v_j} f^{k-1} (1-f^k) \psi.
	\end{multline}
	
	We know that $f^k \to f$ almost everywhere.
	Since the terms $\Abar^k, \bbar^k, \pdv{\bbar^k}{v_i}$ are all uniformly bounded in $L^\infty$ with respect to $k$, the integrands are all dominated by integrable functions independent of $k$, by the bound \eqref{f-est-ineq}, with the exception of
	\[
		\bbar_j^{k-1} (1-2 f^{k-1}) \pdv{f^k}{v_j} \psi
	\]
	which contains a derivative of $f^k$.
	Nevertheless we have that
	\[
		\left\| \bbar_j^{k-1} (1-2 f^{k-1}) - \bbar_j (1-2 f) \right\|_{L^2}
		\leq 2 \left\| \bbar_j^{k-1} \right\|_{L^\infty} \|f^{k-1} - f\|_{L^2}
		+ \left\| \bbar_j^{k-1} - \bbar_j \right\|_{L^2}
	\]
	and thus, by the weak-strong result,
	\[
		\bbar_j^{k-1} (1-2 f^{k-1}) \pdv{f^k}{v_j} \psi \to \bbar_j (1-2 f) \pdv{f}{v_j} \psi
	\]
	in weak $L^1$, which implies the convergence of the integral.
	
	This way, we can pass equation \eqref{approx-conservation-laws-k} to the limit $k \to \infty$, which gives
	\begin{multline*}
		\intxv f(t) \psi(t)
		- \intxv f_0 \psi_0
		- \inttxv f \left( \pdv{\psi}{t} + v_i \pdv{\psi}{x_i} \right)
		=- \inttxv \Abar_{ij} \pdv{f}{v_i} \pdv{\psi}{v_j}\\
		- \inttxv \bbar_j (1-2 f) \pdv{f}{v_j} \psi
		- \inttxv \pdv{\bbar_j}{v_j} f (1-f) \psi.
	\end{multline*}

	Expanding the convolutions and symmetrizing the collision integral, we have
	\begin{multline*}
		\intxv f(t) \psi
		- \intxv f_0 \psi_0
		= \int_0^t \intxv
			f \left(
				\pdv{\psi}{t}
				+ v_i \pdv{\psi}{x_i}
			\right)
		+ \varepsilon \int_0^t \intxv f \Delta_v\psi\\
		- \int_0^t \intxvv a_{ij}(v-v_*)
		\left(
			f_* (1-f_*) \pdv{f}{v_j}
			- f (1-f) \pdv{f_*}{v_{*,j}}
		\right)
		\left(
			\pdv{\psi}{v_i} - \pdv{\psi_*}{v_{*,i}}
		\right).
	\end{multline*}

	Taking $\psi = 1$ we have the conservation of mass and for $\psi = v_k$ we have the conservation of linear momentum.
	For $\psi = |v|^2$, it follows, since $a$ satisfies \eqref{property:approx-a-has-z-eigenvector},
	\[
		\intxv f(t) |v|^2
		- \intxv f_0 |v|^2
		= 2\varepsilon \int_0^t \intxv f
		= 2\varepsilon t \intxv f_0.
	\]
	Next, for $\psi = |x - tv|^2$, we also have from \eqref{property:approx-a-has-z-eigenvector} that
	\[
		\intxv f(t) |x - tv|^2
		- \intxv f_0 |x|^2
		= 2\varepsilon \int_0^t t^2 \intxv f
		= \frac{2\varepsilon t^3}{3} \intxv f_0.
	\]

	Now we pass to the proof of the entropy inequality.
	Multiplying \eqref{iteration-eq-k} by  $\log\left( \frac{f^k}{1-f^k} \right)$ and integrating in $x,v$ leads,
	\begin{multline*}
		\intxv \pdv{f^k}{t} \log\left(\frac{f^k}{1-f^k}\right)
		+ \intxv v_i \pdv{f^k}{x_i} \log\left(\frac{f^k}{1-f^k}\right)
		= \intxv \pdv{}{v_j} \left\{ \abar^{k-1}_{ij} \pdv{f^k}{v_i} \right\} \log\left(\frac{f^k}{1-f^k}\right)\\
		- \intxv \bbar^{k-1}_j (1-2f^{k-1}) \pdv{f^k}{v_j} \log\left(\frac{f^k}{1-f^k}\right)\\
		- \intxv \pdv{\bbar^{k-1}_j}{v_j} f^{k-1} (1-f^k) \log\left(\frac{f^k}{1-f^k}\right)
		+ \varepsilon \intxv \Delta_v f^k \log\left( \frac{f^k}{1-f^k} \right).
	\end{multline*}
	
	The first integral in the left-hand side equals
	\[
		\frac{d}{dt} \intxv (f^k \log f^k + (1-f^k) \log(1-f^k)).
	\]
	The second one, from the decay estimates for $f^k$ leads
	\[
		\intxv \pdv{}{x_i}
		\left[ v_i(f^k \log f^k + (1-f^k) \log(1-f^k)) \right] = 0.
	\]
	
	Integrating by parts, the right-hand side becomes
	\begin{multline*}
		-\intxv  \abar^{k-1}_{ij} \frac{1}{f^k(1-f^k)} \pdv{f^k}{v_i} \pdv{f^k}{v_j}
		- \intxv \bbar^{k-1}_j (1-2f^{k-1}) \pdv{f^k}{v_j} \log\left(\frac{f^k}{1-f^k}\right)\\
		- \intxv \pdv{\bbar^{k-1}_j}{v_j} f^{k-1} (1-f^k) \log\left(\frac{f^k}{1-f^k}\right)
		- \varepsilon \intxv \frac{|\nabla_v f^k|^2}{f^k(1-f^k)}
	\end{multline*}
	which is smaller than
	\begin{multline}
		-\intxv  \abar^{k-1}_{ij} \frac{1}{f^k(1-f^k)} \pdv{f^k}{v_i} \pdv{f^k}{v_j}
		- \intxv \bbar^{k-1}_j (1-2f^{k-1}) \pdv{f^k}{v_j} \log\left(\frac{f^k}{1-f^k}\right)\\
		- \intxv \pdv{\bbar^{k-1}_j}{v_j} f^{k-1} (1-f^k) \log\left(\frac{f^k}{1-f^k}\right).
	\end{multline}
	Finally, integrating in $t \in (0,T)$ we end up with
	\begin{multline}
		\label{entropy-inequality-fk}
		\intxv s(f^k(t)) - \intxv s(f_0)
		\leq -\inttxv  \abar^{k-1}_{ij} \frac{1}{f^k(1-f^k)} \pdv{f^k}{v_i} \pdv{f^k}{v_j}\\
		- \inttxv \bbar^{k-1}_j (1-2f^{k-1}) \pdv{f^k}{v_j} \log\left(\frac{f^k}{1-f^k}\right)
		- \inttxv \pdv{\bbar^{k-1}_j}{v_j} f^{k-1} (1-f^k) \log\left(\frac{f^k}{1-f^k}\right),
	\end{multline}
	where $s(x) = x\log x + (1-x) \log(1-x)$.
	
	Let's pass each of these integrals to the limit.
	Starting with the right-hand side, notice that since $\abar^k$ converges in $L^1$ there exists a function $F \in L^1$ such that $|\abar_{ij}^k| \leq F$ for every $k$, up to extraction of a subsequence.
	By the equivalence of matrix norms, we conclude that there exists a constant $C > 0$ such that $(\sqrt{\abar^k})_{ij} \leq C F^{1/2}$, where $\sqrt{\abar^k}$ is the (matrix) square root of $\abar^k$.
	
	Indeed, let $|\cdot|_{p}$ be the $l^p$ norm in finite dimension.
	We have, for every $v \in \mathbb{R}^N$ and positive semi-definite matrix $A$,
	\[
		\big| \sqrt{A} v \big|_2
		= \langle \sqrt{A} v, \sqrt{A} v \rangle_2
		= \langle A v, v \rangle_2
		\leq |A v|_2 |v|_2.
	\]
	Denote $\| M \|_{op} = \sup_{v \in \mathbb{R}^N} \frac{|Mv|_2}{|v|_2}$ the operator norm of a matrix $M$.
	The above inequality then implies $\big\| \sqrt{\smash \abar^k} \big\|_{op}^2 \leq \| \abar^k \|_{op}$.
	Since all norms in finite dimension are equivalent, there exist a $C > 0$ such that $\big| \sqrt{\smash \abar^k} \big|_\infty^2 \leq C \big| \abar^k \big|_\infty$, that is, $\big( \sqrt{\smash \abar^k} \big)_{ij} \leq C^{1/2} F^{1/2}$, for every $k \in \mathbb{N}$.
	
	The bounds on $f^k$ imply that
	\[
		f^k (1-f^k) \geq
		\frac{C_1 e^{-\alpha(|x|^2 + |v|^2)}}{1 + C_2 e^{-\alpha(|x|^2 + |v|^2)}}
		\geq \frac{C_1}{1 + C_2} e^{-\alpha(|x|^2 + |v|^2)}
	\]
	and thus
	\[
		\left| \big( \sqrt{\smash \abar^{k-1}} \big)_{ij} \frac{1}{\sqrt{f^k (1-f^k)}} e^{-\alpha/2 (|x|^2 + |v|^2)} \right|
		\leq C^{1/2} \sqrt{\frac{1+C_2}{C_1}} F^{1/2},
	\]
	which is an $L^2$ function.
	Also, we have that $\abar_{ij}^k$ converges to $\abar_{ij}$ and $f^k$ to $f$ almost everywhere, which implies, by dominated convergence, that
	\[
		\big( \sqrt{\smash\abar^{k-1}} \big)_{ij} \frac{1}{\sqrt{f^k (1-f^k)}} e^{-\alpha/2 (|x|^2 + |v|^2)}
		\to
		\big( \sqrt{\abar} \big)_{ij} \frac{1}{\sqrt{f (1-f)}} e^{-\alpha/2 (|x|^2 + |v|^2)}
		\; \text{ in } L^2.
	\]
	The bound of the derivative give us that, passing to a subsequence if necessary,
	\begin{equation}
		\label{derivative-convengence-fk}
		e^{\alpha/2 (|x|^2 + |v|^2)} \pdv{f^k}{v_i}
		\weakto
		e^{\alpha/2 (|x|^2 + |v|^2)} \pdv{f}{v_i}
		\; \text{ in } L^2
	\end{equation}
	and thus by weak-strong convergence,
	\[
		\big( \sqrt{\smash\abar^{k-1}} \big)_{ij} \frac{1}{\sqrt{f^k (1-f^k)}} \pdv{f^k}{v_i}
		\weakto
		(\sqrt{\abar})_{ij} \frac{1}{\sqrt{f (1-f)}} \pdv{f}{v_i}
		\; \text{ in } L^2.
	\]
	The lower semi-continuity of the norm under weak convergence implies
	\[
		\left\| \big( \sqrt{\abar} \big)_{ij} \frac{1}{\sqrt{f (1-f)}} \pdv{f}{v_i} \right\|_{L^2}^2
		\leq \liminf_{k \to \infty}
		\left\| \big( \sqrt{\smash\abar^{k-1}} \big)_{ij} \frac{1}{\sqrt{f^k (1-f^k)}} \pdv{f^k}{v_i} \right\|_{L^2}^2
	\]
	but since
	\[
		\left| \big( \sqrt{\abar} \big)_{ij} \frac{1}{\sqrt{f (1-f)}} \pdv{f}{v_i} \right|^2
		= \abar_{ij} \frac{1}{f(1-f)} \pdv{f}{v_i} \pdv{f}{v_j},
	\]
	we have that
	\begin{equation}
		\label{liminf-inequality-aij}
		\inttxv \abar_{ij} \frac{1}{f(1-f)} \pdv{f}{v_i} \pdv{f}{v_j}
		\leq \liminf_{k \to \infty}
		\inttxv \abar_{ij}^{k-1} \frac{1}{f^k(1-f^k)} \pdv{f^k}{v_i} \pdv{f^k}{v_j}.
	\end{equation}
	
	For the second integral, the bounds on $f^k$ give us that
	\[
		\frac{f^k}{1-f^k} \leq C_2 e^{-\alpha (|x|^2 + |v|^2)}
	\]
	and thus
	\begin{equation}
		\label{bound-log-f-(1-f)}
		\left| \log\left(\frac{f^k}{1-f^k}\right) \right|
		\leq C e^{\alpha/2 (|x|^2 + |v|^2)}
	\end{equation}
	for some $C>0$.
	We have that $\bbar_j^{k-1}$ converges almost everywhere and in $L^2$ to $\bbar_j$, and is thus dominated by an $L^2$ function $F_2$.
	But since
	\[
		\left| \bbar^{k-1}_j (1-2f^{k-1}) e^{-\alpha/2 (|x|^2 + |v|^2)} \log\left(\frac{f^k}{1-f^k}\right) \right|
		\leq C F_2,
	\]
	it follows that the term in the absolute value converges in $L^2$.
	This together with the convergence \eqref{derivative-convengence-fk} leads
	\[
		\inttxv \bbar^{k-1}_j (1-2f^{k-1}) \pdv{f^k}{v_j} \log\left(\frac{f^k}{1-f^k}\right)
		\to
		\inttxv \bbar_j (1-2f) \pdv{f}{v_j} \log\left(\frac{f}{1-f}\right).
	\]
	
	Finally, for the last integral, 
	\[
		\inttxv \pdv{\bbar^{k-1}_j}{v_j} f^{k-1} (1-f^k) \log\left(\frac{f^k}{1-f^k}\right),
	\]
	we use that $\pdv{\bbar^{k-1}_j}{v_j}$ is uniformly bounded in $L^\infty$.
	Using the bounds we have on $f^k$, uniform in $k$, we have
	\[
		\left| f^{k-1} \log\left(\frac{f^k}{1-f^k}\right) \right|
		\leq C e^{-\alpha/2 (|x|^2 + |v|^2)}
	\]
	and thus the integrand is uniformly bounded by $C' e^{-\alpha/2 (|x|^2 + |v|^2)}$, which is integrable.
	Since this integrand converges almost everywhere, we have by dominated convergence that the integral converges to
	\[
		\inttxv \pdv{\bbar_j}{v_j} f (1-f) \log\left(\frac{f}{1-f}\right).
	\]
	
	Finally, let us pass the integrals in the left-hand side of \eqref{entropy-inequality-fk} to the limit.
	We have that $s(f^k(t,x,v)) \to s(f(t,x,v))$ for a.e. $(t,x,v) \in (0,T) \times \Rxv$.
	Also, since
	\[
		|s(f^k)| \leq \left| f^k \log\left(\frac{f^k}{1-f^k}\right) \right| + | \log(1-f^k) |
		\leq |f^k| \left(1 + \left| \log\left(\frac{f^k}{1-f^k}\right) \right| \right)
	\]
	we have, from \eqref{bound-log-f-(1-f)} and the uniform bound on $f^k$,
	\[
		|s(f^k)| \leq C' e^{-\alpha/2 (|x|^2 + |v|^2)},
	\]
	for some $C' > 0$, and thus the left-hand side converges, for a.e. $t \in (0,T)$, to
	\[
		\intxv s(f(t)) - \intxv s(f_0).
	\]
	
	Finally, taking the $\liminf$ in \eqref{entropy-inequality-fk}, it follows
	\begin{multline*}
		\intxv s(f(t)) - \intxv s(f_0) \leq
		-\liminf_{k \to \infty}
		\inttxv \abar_{ij}^{k-1} \frac{1}{f^k(1-f^k)} \pdv{f^k}{v_i} \pdv{f^k}{v_j}\\
		- \inttxv \bbar_j (1-2f) \pdv{f}{v_j} \log\left(\frac{f}{1-f}\right)
		- \inttxv \pdv{\bbar_j}{v_j} f (1-f) \log\left(\frac{f}{1-f}\right).
	\end{multline*}
	which, from \eqref{liminf-inequality-aij}, is smaller than
	\begin{equation}
		\label{complex-right-hand-side-entropy}
		- \inttxv \abar_{ij} \frac{1}{f(1-f)} \pdv{f}{v_i} \pdv{f}{v_j}
		- \inttxv \bbar_j (1-2f) \pdv{f}{v_j} \log\left(\frac{f}{1-f}\right)
		- \inttxv \pdv{\bbar_j}{v_j} f (1-f) \log\left(\frac{f}{1-f}\right).
	\end{equation}
	
	Since $\{x \mapsto x(1-x)\}$ is a smooth function and $f$ is bounded, we have that
	\[
		(1-2f) \pdv{f}{v_j} = \pdv{(f(1-f))}{v_j}
	\]
	and since $b_j$ is smooth in the $v$ variable, we can justify the product rule with $f(1-f)$, leading
	\[
		\bbar_j \pdv{(f(1-f))}{v_j}
		+ \pdv{\bbar_j}{v_j} f (1-f)
		= \pdv{(\bbar_j f(1-f))}{v_j},
	\]
	then \eqref{complex-right-hand-side-entropy} equals
	\[
		- \inttxv \abar_{ij} \frac{1}{f(1-f)} \pdv{f}{v_i} \pdv{f}{v_j}
		- \inttxv \pdv{(\bbar_j f(1-f))}{v_j} \log\left(\frac{f}{1-f}\right).
	\]
	
	Now, notice that $\log\left(\frac{f}{1-f}\right)$ is $H^1$ in the $v$ variable, since the uniform bounds \eqref{f-est-ineq} imply, after passing to the limit in $k$,
	\[
		\frac{1}{f(1-f)} \leq C'' e^{\alpha (|x|^2 + |v|^2)},
	\]
	for some $C'' > 0$. Then the bound \eqref{df-estimate}, which is still valid in the limit $k \to \infty$ by \eqref{derivative-convengence-fk}, implies we can justify the chain rule,
	\[
		\pdv{}{v_i} \left[ \log\left(\frac{f}{1-f}\right) \right] = \frac{1}{f(1-f)} \pdv{f}{v_i}
	\]
	is bounded in $L^2$. Thus finally, by the definition of weak derivative, \eqref{complex-right-hand-side-entropy} equals
	\[
		- \inttxv \abar_{ij} \frac{1}{f(1-f)} \pdv{f}{v_i} \pdv{f}{v_j}
		+ \inttxv \bbar_j \pdv{f}{v_j}.
	\]
	Then, expanding the convolutions, this equals
	\[
		- \inttxv a_{ij}(v-v_*)
		\left(
			\frac{f_* (1-f_*)}{f(1-f)} \pdv{f}{v_i} \pdv{f}{v_j}
			- \pdv{f_*}{v_{*,i}} \pdv{f}{v_j}
		\right)
	\]
	which, by symmetrization and \eqref{property:approx-a-has-z-eigenvector}, leads
	\[
		- \frac{1}{2} \inttxv a(v-v_*) f(1-f) f_* (1-f_*)
		\left| \frac{\nabla_v f}{f(1-f)} -
		\frac{\nabla_{v_*} f_*}{f_* (1-f_*)} \right|^{\otimes 2},
	\]
	the desired dissipation.

\end{proof}

\section{The existence theorem}
Now we will use the Proposition \ref{prop-compactness} to prove Theorem \ref{existence-lfd}.
We've divided this proof into a few steps.
First, we will construct a sequence of initial data and approximate collision kernels, thus obtaining approximate solutions using Proposition \ref{prop-existence-approx}.
Then, in a second step we will use these solutions together with Proposition \ref{prop-compactness} to construct a global solution for LFD.
Finally, we will prove that this solution obeys the conservation laws stated in Theorem \ref{existence-lfd}, as well as the entropy inequality.

\begin{proof}[Proof of Theorem \ref{existence-lfd}]
	\proofpart{1}{Approximated initial data and collision kernels}
	We start by defining a sequence of approximated initial data.
	Take $f_0$ such that $f_0 (1+|x|^2+|v|^2) \in L^1(\Rxv)$ and $0 \leq f_0 \leq 1$.
	Let $\chi \in C^\infty_c(\Rxv)$ be a positive function such that $\intxv \chi = 1$ and consider $\chi^n (x,v) = n^{2N}\chi(nx, nv)$.
	Then, the convolution satisfies $0 \leq f_0 * \chi^n \leq 1$.
	
	Also let $\varphi \in C^\infty_c(\Rxv)$ be a radially decreasing function such that $0 \leq \varphi \leq 1$, $\varphi(0) = 1$, $\supp \varphi \subset B_1$ and consider $\varphi^n (x,v) = \varphi \left( \frac{x}{n}, \frac{v}{n} \right)$.
	This way, $\varphi^n$ is an increasing sequence of positive functions in $C^\infty_c(\Rxv)$ such that $\varphi^n \to 1$ pointwise.
	Therefore, $(f_0 * \chi^n) \varphi^n$ is a sequence of $C^\infty_c(\Rxv)$ which tends to $f_0$ almost everywhere, which in turn implies that the sequence of functions
	\[
		f_0^n = \frac{\frac{1}{n} e^{-(|x|^2+|v|^2)} + (f_0 * \chi^n) \varphi^n}{1+\frac{2}{n}}
	\]
	converges a.e. to $f_0$ and satisfies $0 < f^n_0 < 1$.
	
	Let us show that 
	\begin{equation}
		\label{conv:f0-regularized-to-f0}
		f_0^n (1 + |x|^2 + |v|^2) \to f_0 (1 + |x|^2 + |v|^2) \text{ in } L^1(\Rxv),
	\end{equation}
	Indeed, writing $\weightxv = (1 + |x|^2 + |v|^2)$, we have
	\begin{multline*}
		\intxv | f_0^n - f_0 | \weightxv
		\leq \frac{1}{n+2} \intxv e^{-(|x|^2+|v|^2)} \weightxv
		+ \frac{2}{n+2} \intxv f_0 \weightxv\\
		+ \frac{n}{n+2} \intxv (f_0 * \chi^n) | 1 - \varphi^n | \weightxv 
		+ \frac{n}{n+2} \intxv |(f_0 * \chi^n) - f_0| \weightxv.
	\end{multline*}
	The first two terms in the right-hand side converge to 0 and the third is bounded by
	\[
		\intxv ( 1 - \varphi^n ) \weightxv,
	\]
	which converges to 0 by Beppo-Levi.
	Hence, we just need to show that 
	\[
		(f_0 * \chi^n) \weightxv \to f_0 \weightxv \; \text{ in }L^1(\Rxv).
	\]
	We write
	\[
		\intxv |(f_0 * \chi^n) - f_0| \weightxv
		\leq \intxv \left| (f_0 * \chi^n) \weightxv - (f_0 \weightxv) * \chi^n \right|
		+ \intxv \left| (f_0 \weightxv) * \chi^n - f_0 \weightxv \right|
	\]
	The second integral converges to zero by standard approximation by convolution theorems, since $f_0 \weightxv \in L^1$.
	For the first one, using that $|x|^2 \leq |x-x'|^2 + |x'|^2$,
	\begin{align*}
		| (f_0 * \chi^n) \weightxv &- (f_0 \weightxv) * \chi^n |(x,v)\\
		&\leq \left| \int \chi^n(x - x', v-v') f_0(x', v') \weightxv 
					- \chi^n(x-x', v-v') f_0(x', v') \langle x', v' \rangle dx' dv' \right|\\
		&\leq \left| \int \chi^n(x - x', v-v') f_0(x', v') (|x|^2 + |v|^2 - |x'|^2 - |v'|^2) dx' dv' \right|\\
		&\leq \int \chi^n(x - x', v-v') (|x-x'|^2 + |v-v'|^2) f_0(x', v') dx' dv'\\
		&= [(\chi^n \weightxv) * f_0] (x,v).
	\end{align*}
	Hence by Young's inequality it follows that the first integral is bounded by $\| \chi^n \weightxv \|_{L^1} \| f_0 \|_{L^1}$, but since $\| \chi^n \weightxv \|_{L^1} = \frac{1}{n^2} \| \chi \weightxv \|_{L^1}$, it must converge to zero and therefore we have $f_0^n \weightxv \to f_0 \weightxv$ in $L^1(\Rxv)$.
	
	Let us check that these $f_0^n$ have the decay properties for us to apply Proposition \ref{prop-existence-approx}.
	We have, trivially,
	\[
			f_0^n \geq \frac{1}{n+2} e^{-(|x|^2+|v|^2)}.
	\]
	On the other hand note that for $C, \alpha > 0$,
	\[
		g
		\leq \frac{C e^{-\alpha (|x|^2 + |v|^2)}}{1 + C e^{-\alpha (|x|^2 + |v|^2)}}
		\iff
		\frac{g}{1-g}
		\leq C e^{-\alpha (|x|^2 + |v|^2)}.
	\]
	So, let $C_n > 0$ be such that $(f_0 * \chi^n) \varphi^n \leq C_n e^{- (|x|^2 + |v|^2)}$.
	We have that
	\[
		\frac{f^n_0}{1-f^n_0}
		=
		\frac{\frac{1}{n} e^{-(|x|^2+|v|^2)} + (f_0 * \chi^n) \varphi^n}{1+\frac{2}{n} - \frac{1}{n} e^{-(|x|^2+|v|^2)} - (f_0 * \chi^n) \varphi^n}		
	\]
	but since $1 + \frac{2}{n} - \frac{1}{n} e^{-(|x|^2+|v|^2)} - (f_0 * \chi^n) \varphi^n \geq \frac{1}{n}$, it follows
	\[
		\frac{f^n_0}{1-f^n_0}
		\leq e^{-(|x|^2+|v|^2)} + n (f_0 * \chi^n) \varphi^n
		\leq (nC_n+1) e^{-(|x|^2+|v|^2)},
	\]
	which shows $f_0^n$ has the desired decay.
	
	We now approximate the collision kernel of the equation.
	In order to apply Proposition \ref{prop-compactness} to solutions constructed with Proposition \ref{prop-existence-approx}, we need to find a sequence $a^n \in \mathcal{S}(\mathbb{R}^N)$ satisfying property \eqref{property:approx-a-has-z-eigenvector} for each $n$ and such that $\Abar^n_{ij} \equiv a^n *_v (f^n(1-f^n))$ satisfies \eqref{ineq-uniform-ellip}.
	
	Let $P(z) = \left(I - \frac{z \otimes z}{|z|^2}\right)$ and consider its approximation
	\begin{equation}
		\label{eq:P^n-definition}
		P^n(z) = \frac{|z|^2}{|z|^2 + 1/n}I - \frac{z \otimes z}{|z|^2 + 1/n}.
	\end{equation}
	
	Let $\eta \in C^\infty_c(\RN)$ be a positive, radial function such that $\supp \eta \subset [-1,1]$, $\int \eta \, dz = 1$ and condider $\eta_n(z) = n^N \eta(n z)$, a radially symmetric mollifier in $C^\infty_c(\RN)$ with $\supp \eta_n \subset B_{1/n}$.
	Also, let $\psi \in \mathcal{S}(\mathbb{R}^N)$ be a radial function such that $\psi > 0$ and $\psi(0) = 1$, in such a way that $\psi^n(z) = \psi(z/n)$ converges almost everywhere to $1$.
	
	We denote the mollification $\Gamma^n(z) = [\Gamma(|\cdot|) * \eta_n](z)$, which is radial as convolution of radially symmetric functions, and converges to $\Gamma(|z|)$ in $L^r(\RN) + L^{\infty,*}(\RN)$.
	Passing to a subsequence in $n$, we can suppose that $\Gamma^n$ is dominated by an $L^r(\RN) + L^{\infty}(\RN)$ function.
	The collision kernel is then approximated as
	\begin{equation}
		\label{eq:approximate-aij}
		a^n(z) = \Gamma^n(z) \psi^n(z) P^n(z).
	\end{equation}
	Notice this is indeed a symmetric matrix satisfying \eqref{property:approx-a-has-z-eigenvector}.
	Also, for each $n$ we have that $a^n \in \mathcal{S}(\RN)$, as the product of $C^\infty(\RN)$, bounded functions with a function in $\mathcal{S}(\RN)$.
	
	Next, we show that this approximation preserves the quasi-ellipticity of the matrix $a(z)$.
	Notice that our approximation $P^n$ is such that $P^n(z) \geq P(z)$ for every $n \in \mathbb{N}$ and every $z \in \RN$, hence
	\[
		a^n(z) \geq \Gamma^n(z) \psi^n(z) P(z).
	\]
	
	Now, let $0 < R < 1$ and notice that $\supp \eta_n \subset B_1$ for every $n$.
	If $|z| \leq R$ then from \eqref{property:Gamma-ellipticity} we conclude
	\[
		[\Gamma(|\cdot|) * \eta_n](z) = \int \Gamma(|z - z_*|) \eta_n(z_*) \; dz_*
		\geq K_{2R},
	\]
	and since $\psi_n$ is also lower bounded in this set, we have that for every $R > 0$ there exists some constant $C_R > 0$ such that
	\begin{equation}
		\label{ineq:approximate-aij-ellpiticity}
		a^n(z) \geq C_R \left( I - \frac{z \otimes z}{|z|^2} \right),
		\; \forall\, |z| \leq R.
	\end{equation}
	
	The last property we want to show about this approximation is that $a^n_{ij}$ and $\pdv{a^n_{ij}}{z_j}$ converge in $L^1(\mathbb{R}^N) + L^{\infty,*}(\mathbb{R}^N)$.
	We say that a sequence $\Psi^n$ converges in $L^p(\mathbb{R}^N) + L^{\infty,*}(\mathbb{R}^N)$, for some $1 \leq p \leq \infty$, if we can decompose it as $\alpha^n + \beta^n$, where $\alpha^n$ converges strongly $L^p(\mathbb{R}^N)$ and $\beta^n$ converges *-weakly in $L^{\infty}(\mathbb{R}^N)$, that is,
	\[
		\begin{tikzpicture}
			\matrix[row sep=1cm, column sep=10pt]
			{
				\node{$\Psi^n(z)$}; & \node{$=$}; & \node(alpha1){$\alpha^n(z)$};
						& \node{$+$}; & \node(beta1){$\beta^n(z)$};    & &\\
				\node{};         &             & \node(alpha2){$\alpha(z)$};
				     &\node{$+$};  & \node(beta2){$\beta(z)$};      &\node{$=$}; &\node{$\Psi(z).$};\\
			};
			\draw [->] (alpha1) -- (alpha2) node [pos=0.45,right,font=\footnotesize] {$L^p(\mathbb{R}^N)$};
			\draw [-left to] (beta1)  -- (beta2)  node [pos=0.5,right,font=\footnotesize] {$L^\infty(\mathbb{R}^N)$}
													node[above right, xshift=0ex,yshift=1ex]  {\scriptsize $*$};
		\end{tikzpicture}
	\]
	
	The convergence of $a^n$ follows from standard theorems of regularization by convolution.
	Indeed, the right-hand side of \eqref{eq:approximate-aij} converges almost everywhere to $\Gamma(|z|) P(z)$ and, up to extraction of a subsequence in $n$, we have that it's dominated by a function in $L^r(\RN) + L^\infty(\RN) \subset L^1(\RN) + L^\infty(\RN)$, therefore converging in $L^1(\RN) + L^{\infty,*}(\RN)$ by dominated convergence.
	
	For $\pdv{a^n_{ij}}{z_j}$ we have, differentiating \eqref{eq:approximate-aij},
	\begin{equation}
		\label{eq:aij-approximation-derivative}
		\pdv{a^n_{ij}}{z_j}(z) = 
			\pdv{}{z_j} \Big[
				\Gamma^n(z) P^n_{ij}(z) 
			\Big] \psi^n(z)
			+ \Gamma^n(z) P^n_{ij}(z) \pdv{\psi_n}{z_j}(z).
	\end{equation}
	Now notice that if $\Psi: \RN \to \mathbb{R}$ is a smooth, radial function then we have that for every $\varepsilon > 0$, 
	\begin{equation}
		\label{eq:div-radial}
		\pdv{}{z_j}\left[
			\Psi(z) \left(
				 \frac{|z|^2}{|z|^2 + \varepsilon} \delta_{ij} - \frac{z_i z_j}{|z|^2 + \varepsilon}
				\right)
		\right]
		= \Psi(z) \xi_\varepsilon(|z|)
		\frac{z_i}{|z|^2}.
	\end{equation}
	where
	\[
		\xi_\varepsilon(|z|) =
			- (N-3) \frac{|z|^2}{|z|^2 + \varepsilon}.
	\]
	Hence it follows that
	\[
		\pdv{}{z_j} \Big[
			\Gamma^n(z) P^n_{ij}(z) 
		\Big]
		\psi^n(z)
		= \Gamma^n(z) \xi_n(|z|) \psi^n(z) \frac{z_i}{|z|^2},
	\]
	which is dominated in $L^1(\RN) + L^\infty(\RN)$, therefore converges in $L^1(\RN) + L^{\infty,*}(\RN)$ to 
	\[
		-\Gamma(|z|) (N-3) \frac{z_i}{|z|^2} = \pdv{a_{ij}}{z_j}.
	\]
	Meanwhile, the second term of \eqref{eq:aij-approximation-derivative} equals
	\[
		\Gamma^n(z) P^n_{ij}(z) \frac{1}{n} \pdv{\psi}{z_j}\left(\frac{z}{n}\right) \frac{z_i}{|z|},
	\]
	and similarly one can see that this converges to $0$ in $L^1(\RN) + L^{\infty,*}(\RN)$.
	Hence, we have $\pdv{a^n_{ij}}{z_j} \to \pdv{a_{ij}}{z_j}$ in $L^1(\RN) + L^{\infty,*}(\RN)$.

	In this next step, we use the approximate data to obtain approximate solutions using Proposition \ref{prop-existence-approx}, for which we can obtain compactness using Proposition \ref{prop-compactness}.
	
	\proofpart{2}{Approximate solutions}
	
	Let $\varepsilon_n \to 0$.
	For each $n$, let $f^n$ to be the global solution to \eqref{regularized} given by Proposition \ref{prop-existence-approx} using the $a^n$ and the $f_0^n$ constructed in Part 1.
	We have, for every $\varphi \in \mathcal{D}((0,\infty)\times\Rxv)$,
	\begin{multline}
		\label{eq-fn-not-in-form}
		\intxv f^n(t) \varphi(t) 
		- \int_0^t \intxv f^n \pdv{\varphi}{t}
		- \int_0^t \intxv f^n v_i \pdv{\varphi}{x_i}
		= \int_0^t \intxv ( \abar^n_{ij} + \varepsilon_n \delta_{ij} ) f^n \mpdv{\varphi}{v_i}{v_j}\\
		+ \int_0^t \intxv \left( \pdv{\abar^n_{ij}}{v_i} + \bbar^n_j (1-2f^n) \right) f^n \pdv{\varphi}{v_j}\\
		+ \int_0^t \intxv \left[ \left( 
				\pdv{\bbar^n_j}{v_j} (1-2f^n)
				- 2 \bbar^n_j \pdv{f^n}{v_j}
			\right) f^n
			- \pdv{\bbar^n_j}{v_j} f^n (1-f^n) \right]\varphi,
	\end{multline}
	for every $t > 0$.
	This cannot be applied directly to Proposition \ref{prop-compactness}, because the form of the equation is not yet compatible.
	Given the $H^1_{v,loc}$ regularity of $f^n$ and $\bbar^n$, we can justify
	\[
		\inttxv \left( 
			\pdv{\bbar^n_j}{v_j} (1-2f^n)
			- 2 \bbar^n_j \pdv{f^n}{v_j}
			\right) f^n \varphi
		= \inttxv \pdv{}{v_j} \left( \bbar^n_j (1-2f^n) \right) f^n \varphi,
	\]
	where we have notated $\inttxv$ the integrals $\int_0^t \intxv dt$.
	By definition of weak derivative, we have
	\begin{align*}
		\inttxv \pdv{}{v_j} \left( \bbar^n_j (1-2f^n) \right) f^n \varphi
		&= - \inttxv \bbar^n_j (1-2f^n) \pdv{(f^n \varphi)}{v_j}\\
		&= - \inttxv \bbar^n_j (1-2f^n) \pdv{f^n}{v_j} \varphi
			- \inttxv \bbar^n_j (1-2f^n) f^n \pdv{\varphi}{v_j}.
	\end{align*}

	Now, once again the local regularity of $f^n$ in the variable $v$ allows 	
us to write
	\[
		\pdv{f^n(1-f^n)}{v_j} = (1-2f^n) \pdv{f^n}{v_j}
	\]
	and thus the first integral above equals, by definition of weak derivative,
	\[
		 - \inttxv \bbar^n_j \pdv{(f^n(1-f^n))}{v_j} \varphi
		= \inttxv f^n(1-f^n) \pdv{\bbar^n_j}{v_j} \varphi
			+ \inttxv f^n(1-f^n) \bbar^n_j \pdv{\varphi}{v_j},
	\]
	where we have used the $v$ regularity of $\varphi$ and $\bbar$ to justify the product rule.
	We thus have, finally
	\[
		\inttxv \left[ 
			\left( 
			\pdv{\bbar^n_j}{v_j} (1-2f^n)
			- 2 \bbar^n_j \pdv{f^n}{v_j}
			\right) f^n
		- \pdv{\bbar^n_j}{v_j} f^n(1-f^n)
			\right] \varphi =
			\inttxv \bbar^n_j (f^n)^2 \pdv{\varphi}{v_j}.
	\]
	Substituting this in \eqref{eq-fn-not-in-form} leads
	\begin{multline}
		\label{eq-fn-in-form}
		\intxv f^n(t) \varphi(t) 
		- \inttxv f^n \pdv{\varphi}{t}
		- \inttxv f^n v_i \pdv{\varphi}{x_i}
		= \inttxv ( \abar^n_{ij} + \varepsilon_n \delta_{ij} ) f^n \mpdv{\varphi}{v_i}{v_j}\\
		+ \inttxv \left( \pdv{\abar^n_{ij}}{v_i} f^n + \bbar^n_j f^n (1-f^n) \right) \pdv{\varphi}{v_j},
	\end{multline}
	which is then in the form \eqref{eq-landau-like}.
	Given the approximate a priori estimates from Proposition \ref{prop-approx-a-priori-estimates}, this sequence satisfies all the hypothesis for Proposition \ref{prop-compactness} and thus we have $L^1_{loc}((0,\infty); L^1(\Rxv))$ compactness for the sequence $f^n$.
	Passing to a subsequence we then have
	\begin{equation}
		\label{fn-convergences}
		f^n \to f\; \text{ in } L^1_{loc}((0,\infty); L^1(\Rxv)) \text{ and a.e. in } (0,\infty) \times \Rxv.
	\end{equation}
	
	\proofpart{3}{Passing the equation to the limit}
	
	We start by studying the convergence of the coefficients of the equation.
	For this, let us recall that if $A^n \to A$ in $L^1(\RN) + L^{\infty, *}(\RN)$ and $F^n \to F$ in $L^1_{loc}((0,\infty); L^1(\Rxv))$, then one can show that
	\[
		A^n *_v F^n \to A *_v F
		\text{ in }
		L^1_{loc}((0,\infty) \times \Rxv).
	\]
	
	Therefore, we have
	\[
		\abar^n_{ij} = a^n_{ij} *_v (f^n(1-f^n)) \to a_{ij} *_v (f(1-f)) \equiv \abar_{ij}
	\]
	in $L^1_{loc}((0,\infty) \times \Rxv)$,	for every $i, j \in \{1, 2, \dots, N\}$ and also that
	\begin{align*}
		\pdv{\abar_{ij}^n}{v_j} \equiv \abar_{ij}
		= \left(\pdv{a_{ij}^n}{v_j}\right) *_v (f^n(1-f^n))
		&\to \left(\pdv{a_{ij}}{v_j}\right) *_v (f(1-f))
		\equiv \pdv{\abar_{ij}}{v_j}\\
		\bbar^n_i = \pdv{a^n_{ij}}{v_j} *_v f^n &\to \pdv{a_{ij}}{v_j} *_v f \equiv \bbar_i\\
	\end{align*}
	in $L^1_{loc}((0,\infty) \times \Rxv)$,	for every $i \in \{1, 2, \dots, N\}$.
	
	We now show that this convergence is sufficient to pass equation \eqref{eq-fn-in-form} to the limit in the sense of distributions.
	By dominated convergence, the first three integrals of \eqref{eq-fn-in-form} converge, respectively, to
	\[
		\intxv f(t) \varphi(t),
		\inttxv f \pdv{\varphi}{t}
		\text{ and }
		\inttxv f v_i \pdv{\varphi}{x_i},
	\]
	for almost every $t > 0$.
	
	For the two integrals in the right-hand side of \eqref{eq-fn-in-form}, we begin by noticing that $f^n \weakstarto f$ and $f^n (1-f^n) \weakstarto f(1-f)$ in $L^\infty((0,\infty)\times\Rxv)$, from dominated convergence.
	Next, recall that if $A^n \to A$ in $L^1_{loc}((0,\infty) \times \Rxv)$ and $F^n \weakstarto F$ in $L^\infty((0,\infty) \times \Rxv)$ then we have $A^n F^n \to A F$ in $\mathcal{D}'((0,\infty) \times \Rxv)$ and therefore
	\[
		\abar^n_{ij} f^n \to \abar_{ij} f
		\;\text{ and }\;
		\pdv{\abar^n_{ij}}{v_j} f^n + \bbar^n_i f^n (1-f^n) \to \pdv{\abar_{ij}}{v_j} f + \bbar_i f (1-f) 
	\]
	in $\mathcal{D}'((0,\infty) \times \Rxv)$ and we can pass the last two integrals of \eqref{eq-fn-in-form} to the limit, implying that $f$ is a global weak solution of the LFD equation.
	
	\proofpart{4}{A priori estimates}
	We will now show that the weak solution constructed above obeys the a priori estimates.
	The Pauli exclusion principle is clearly valid for the limit function $f$, given the a.e. convergence in \eqref{fn-convergences}, and therefore $f$ satisfies part 1) of Theorem \ref{existence-lfd}.
	
	Next, the $L^1_{loc}((0,\infty); L^1(\Rxv))$ convergence implies (possibly passing to a subsequence) that we have $f^n(t) \to f(t)$ in $L^1(\Rxv)$, for almost every $t \in (0,\infty)$.
	The conservation of mass for $f^n$, given by Proposition \ref{prop-approx-a-priori-estimates}, tells us that
	\[
		\intxv f^n(t) = \intxv f_0^n
		\text{ for a.e. } t \in (0,\infty).
	\]
	The integral in the left-hand side then converges to $\intxv f(t)$ and from \eqref{conv:f0-regularized-to-f0} the right-hand side converges to $\intxv f_0$ thus obtaining the conservation of mass in the limit.
	In particular, this implies that $f \in L^\infty((0,\infty); L^1(\Rxv))$.
	
	For the kinetic energy using Proposition \ref{prop-approx-a-priori-estimates}, we have that $f^n$ satisfies
	\[
		\intxv f^n(t) |v|^2 = 
			\intxv f_0^n |v|^2 + 2t \varepsilon_n \intxv f_0^n
			\text{ for a.e. } t \in (0,\infty)
	\]
	Again from \eqref{conv:f0-regularized-to-f0}, the right-hand side converges to $\intxv f_0 |v|^2$ as $n \to \infty$, while the $\limsup$ of the left-hand side is lower bounded by $\intxv f(t) |v|^2$, from Fatou's lemma, thus giving the kinetic energy inequality.
	The same argument can be used to show the moment of inertia inequality.

	The conservation of linear momentum is the obtained by interpolation.
	We have, from Cauchy-Schwarz that, for every $k = 1, \dots, N$ and almost every $t \in (0,\infty)$,
	\[
		\intxv |(f^n(t) - f(t))v_k|
		\leq \left( \intxv |f^n(t) - f(t)| \right)^{1/2}
			\left( \intxv (f^n(t) + f(t)) |v|^2 \right)^{1/2}.
	\]
	The second integral on the right-hand side is bounded uniformly in $n$, from the inequality of kinetic energy, while the first integral converges to zero for a.e. $t \in (0,\infty)$, since $f^n(t) \to f(t)$ in $L^1(\Rxv)$.
	So, in particular, we have that
	\[
		\intxv f^n(t) v_k \to \intxv f(t) v_k, \text{ for a.e. } t \in (0,\infty).
	\]
	
	\proofpart{5}{Entropy inequality}
	In this last part, we prove that the constructed solutions satisfy part (4) of Theorem \ref{existence-lfd}, which corresponds to the entropy inequality.
	From Proposition \ref{prop-approx-a-priori-estimates}, the approximated solution satisfies the inequality for $n$, that is, for almost every $t > 0$,
	\begin{equation}
		\label{eq-entropy-inequality-for-n}
		\intxv s^{n}(t) + \int_{0}^{t}\intxv d^{n}(\tau) d\tau \leq \intxv s^{n}(0)
	\end{equation}
	where $s^{n}$ and $d^{n}$ are defined as $s$ and $d$, but for $f^{n}$ instead of $f$ and $a^{n}$ instead of $a$.
	Now we want to pass this inequality to the limit.
	
	Let us begin by showing the convergence of the right-hand side.
	We know that $f_0^n \to f_0$ almost everywhere, thus $s^n(0) \to s(0)$ a.e.
	Since $x \log x - x \leq x\log x + (1-x)\log(1-x)$ for every $x \in [0,1]$, we have that
	\[
		|s^n(0)| = |f_0^n \log f_0^n + (1 - f_0^n)\log(1 - f_0^n)| 
		\leq |f_0^n \log f_0^n - f_0^n|
		\leq |f_0^n \log f_0^n| + f_0^n.
	\]
	Thus, since $0 < f_0^n < 1$ and $x \log\left(\frac{1}{x}\right) \leq C_0 \sqrt{x}$ for every $x \in (0,1)$,
	\newcommand{\expxv}{\exp{(-|x| - |v|)}}
	\begin{align*}
		|s^n(0)| &\leq f_0^n \log\left( \frac{1}{f_0^n} \right) + f_0^n\\
		&= f_0^n \log\left( \frac{1}{f_0^n} \right) \mathbbm{1}_{f_0^n \leq \expxv}
		 + f_0^n \log\left( \frac{1}{f_0^n} \right) \mathbbm{1}_{f_0^n \geq \expxv}
		 + f_0^n\\
		&\leq C_0 \sqrt{f_0^n} \mathbbm{1}_{f_0^n \leq \expxv}
		+ f_0^n (|x| + |v|) + f_0^n\\
		&\leq C_0 \exp \left( -\frac{1}{2}(|x| + |v|) \right) + f_0^n (1 + |x| + |v|).
	\end{align*}
	from the convergence of $f_0^n$ and $f_0^n (1 + |x|^2 + |v|^2)$ in $L^1(\Rxv)$ we have by interpolation that $f_0^n (1 + |x| + |v|)$ also converges in $L^1(\Rxv)$, and thus by dominated convergence it follows that
	\[
		\intxv s^n(0) \to \intxv s(0).
	\]
	
	For the next term, note the same approach as for $f_0$ works for the function $(x', v) \mapsto f^n(t, x'+tv, v)$, leading
	\[
		|s^n(t,x'+tv,v)| \leq C_0 \exp \left( -\frac{1}{2}(|x'| + |v|) \right) + f^n(t,x'+tv,v) (1 + |x'| + |v|).
	\]
	Then if one considers the variable $x = x'+tv$ we conclude
	\[
		|s^n(t,x,v)| \leq C_0 \exp \left( -\frac{1}{2}(|x-tv| + |v|) \right) + f^n(t,x,v) (1 + |x-tv| + |v|).
	\]
	Since the sequence $f^n (1 + |x-tv|^2 + |v|^2)$ is uniformly bounded in $L^1_{loc}((0,\infty); L^1(\Rxv))$, it follows by interpolation that $f^n(1 + |x-tv| + |v|)$ converges in this space, hence by dominated convergence we conclude that $s^n$ should also converge in this space which, passing to a subsequence, implies that
	\[
		\int_{xv}s^{n}(t) \to \int_{xv}s(t).
	\]
	for a.e. $t > 0$.
	
	We however cannot directly prove convergence for $d^{n}$, because of the derivatives in $f$.
	We will instead prove a weak convergence result.
	Notice $d^n$ can be written as
	\begin{multline*}
		d^n =
		\bigg\langle a^n(v-v_*) \sqrt{f^n(1-f^n) f_*^n(1-f_*^n)}
		\left(
			\frac{\nabla_v f^n}{f^n(1-f^n)} -
			\frac{\nabla_{v_*} f_*^n}{f_*^n(1-f_*^n)}
		\right),\\
		\sqrt{f^n(1-f^n) f_*^n(1-f_*^n)}
		\left(
			\frac{\nabla_v f^n}{f^n(1-f^n)} -
			\frac{\nabla_{v_*} f_*^n}{f_*^n(1-f_*^n)}
		\right)
		\bigg\rangle,
	\end{multline*}
	where $\langle \cdot, \cdot \rangle$ denotes the $L^2$ inner product.
	
	Since $a^n$ is a positive matrix, let $\sqrt{a^n}$ be the square root of $a^n$.
	We can then rewrite $d^n$ as
	\begin{align*}
		&\left|
			\sqrt{a^n(v-v_*)} \sqrt{f^n(1-f^n) f_*^n(1-f_*^n)}
			\left(
				\frac{\nabla_v f^n}{f^n(1-f^n)} -
				\frac{\nabla_{v_*} f_*^n}{f_*^n(1-f_*^n)}
			\right)
		 \right|^2\\
		&=\left|
			\sqrt{a^n(v-v_*)} \left(
				\sqrt{f_*^n(1-f_*^n)}
				\frac{\nabla_v f^n}{\sqrt{f^n(1-f^n)}} -
				\sqrt{f^n(1-f^n)}
				\frac{\nabla_{v_*}f_*^n}{\sqrt{f_*^n(1-f_*^n)}}
			\right)
		 \right|^{2}\\
		&=\frac{1}{4} \left|
			\sqrt{a^n (v-v_*)} \left(
				\sqrt{f_*^n(1-f_*^n)} \nabla_v (\arcsin\sqrt{f^n}) -
				\sqrt{f^n(1-f^n)} \nabla_{v_*} (\arcsin\sqrt{f_*^n})
				\right)
			\right|^2.
	\end{align*}
	The main idea afterwards is to rewrite the terms inside the square in order to pass to the limit in the sense of distributions.
	This way, for example, we write
	\begin{multline}
		\label{eq:sqrt-an-decomposition}
		\sqrt{a^n (v-v_*)} \sqrt{f_*^n(1-f_*^n)} \nabla_v (\arcsin\sqrt{f^n})
		= \Div_v \left(
			\sqrt{a^n (v-v_*)} \sqrt{f_*^n(1-f_*^n)} \arcsin\sqrt{f^n}
		\right)\\
		- \Div_v \left( \sqrt{a^n (v-v_*)} \right) \sqrt{f_*^n(1-f_*^n)}
		\arcsin\sqrt{f^n},
	\end{multline}
	and we notate the limit of this sequence as $\sqrt{a (v-v_*)} \sqrt{f_*(1-f_*)} \nabla_v (\arcsin\sqrt{f})$.
	
	Let $\Rxvv = \RN_x \times \RN_v \times \RN_{v_*}$.
	From uniqueness of the symmetric square root of a positive semi-definite matrix, we have that
	\[
		\sqrt{a^n(z)}
		= \sqrt{\Gamma^n(z) \psi^n(z) 
			\left(
				1 + \frac{1}{n|z|^2}
			\right) 
			} P^n(z),
	\]
	which is uniformly bounded in $L^1_{loc}(\RN)$.
	By dominated convergence, one then shows that
	\[
		\sqrt{a^n (v-v_*)} \sqrt{f_*^n(1-f_*^n)} \arcsin\sqrt{f^n}
		\to \sqrt{a (v-v_*)} \sqrt{f_*(1-f_*)} \arcsin\sqrt{f}
	\]
	in $\mathcal{D}'((0,\infty) \times \Rxvv)$.	
	Then using \eqref{eq:div-radial} we deduce that
	\[
		\Div_z \sqrt{a^n(z)}
		= \sqrt{ 
			\Gamma^n(z) \psi^n(z) 
			\left(
				1 + \frac{1}{n|z|^2}
			\right)
			}
		\xi_n(|z|) \frac{z}{|z|^2},
	\]
	which converges almost everywhere to
	\[
		\sqrt{ \Gamma(|z|) }
		\xi(|z|) \frac{z}{|z|^2}
		= \Div_z \sqrt{a(z)}.
	\]
	Then, using that $\sqrt{1+x} \leq 1 + \sqrt{x}$ for every $x \geq 0$ we conclude
	\[
		\left| \Div_z \sqrt{a^n(z)} \right| \leq
		\sqrt{ \Gamma^n(z) \psi^n(z) } \xi_n(|z|)
		\left(\frac{1}{|z|} + \frac{1}{|z|^2}\right),
	\]
	and therefore $\Div_z \sqrt{a^n(z)}$ is dominated by an $L^1_{loc}(\RN)$ function.
	Hence from dominated convergence, we have that
	\[
		\Div_v \left( \sqrt{a^n (v-v_*)} \right) \sqrt{f_*^n(1-f_*^n)} \arcsin\sqrt{f^n}
		\to 
		\Div_v \left( \sqrt{a (v-v_*)} \right) \sqrt{f_*(1-f_*)} \arcsin\sqrt{f}
	\]
	in $\mathcal{D}'((0,\infty) \times \Rxvv)$.
	Therefore the right hand side of \eqref{eq:sqrt-an-decomposition} converges in $\mathcal{D}'((0,\infty) \times \Rxvv)$ to
	\[
		\Div_v \left(
			\sqrt{a (v-v_*)} \sqrt{f_*(1-f_*)} \arcsin\sqrt{f}
		\right)
		- \Div_v \left( \sqrt{a (v-v_*)} \right) \sqrt{f_*(1-f_*)}
		\arcsin\sqrt{f}
	\]
	which we will notate, by correspondence with the smooth case, $\sqrt{a (v-v_*)} \sqrt{f_*(1-f_*)}\nabla_v(\arcsin\sqrt{f})$.
	Exchanging $v$ and $v_*$ we show similarly that
	\[
		\sqrt{a^n (v-v_*)} \sqrt{f^n(1-f^n)}\nabla_{v_*}(\arcsin\sqrt{f_*^n})
		\to
		\sqrt{a (v-v_*)}
		\sqrt{f(1-f)}\nabla_{v_*}(\arcsin\sqrt{f_*})
	\]
	in $\mathcal{D}'((0,\infty) \times \Rxvv)$, where the distribution on the left is a notation, defined analogously to the previous case.
	Subtracting both convergences, we conclude that
	\begin{multline*}
		\sqrt{a^n (v-v_*)} \left(
			\sqrt{f_*^n(1-f_*^n)} \nabla_v(\arcsin\sqrt{f^n})
			- \sqrt{f^n(1-f^n)}\nabla_{v_*}(\arcsin\sqrt{f_*^n})
			\right)
		\to\\
		\sqrt{a (v-v_*)} \left(
			\sqrt{f_*(1-f_*)}\nabla_v(\arcsin\sqrt{f})
			- \sqrt{f(1-f)}\nabla_{v_*}(\arcsin\sqrt{f_*})
		\right)
	\end{multline*}
	in $\mathcal{D}'((0,\infty) \times \Rxvv)$.
	Notice that inequality \eqref{eq-entropy-inequality-for-n} implies that the left-hand side of this convergence is in fact uniformly bounded in $L^2((0,\infty) \times \Rxvv))$, hence the convergence above actually holds weakly in this space, and the distribution in the right-hand side can be identified to an $L^2$ function.
	
	Then, by the lower semi-continuity of the norm under weak convergence, it follows that, for almost every $t > 0$, we have
	\[
		\int_0^t \int_{x,v,v_*} d(\tau) \; d\tau \leq
		\liminf_{n\to\infty} \int_0^t \int_{x,v,v_*} d^n(\tau) \; d\tau,
	\]
	where we have defined
	\[
		d =
		\frac{1}{4}
		\left|
			\sqrt{a (v-v_*)} \left(
				\sqrt{f_*(1-f_*)}\nabla_v(\arcsin\sqrt{f})
				- \sqrt{f(1-f)}\nabla_{v_*}(\arcsin\sqrt{f_*})
			\right)
		\right|^{2}
	\]
	Thus passing the $\liminf$ in \eqref{eq-entropy-inequality-for-n}, the entropy inequality follows.
\end{proof}

\bibliographystyle{acm}
\bibliography{references}

\end{document}